\definecolor{newcolor}{rgb}{.8,.349,.1}
\newcommand{\real}{\mathbb{R}}
\newcommand{\dv}[2]{\frac{\partial #1}{\partial #2}}
\newcommand{\ip}[1]{\langle #1 \rangle}
\newcommand{\ipp}[2]{\langle #1, #2 \rangle}
\renewcommand{\boldsymbol}[1]{\bm{#1}} 
\newcommand{\mat}[1]{\mathbf{#1}}
\newcommand{\grad}{\boldsymbol{\nabla}}
\newtheorem{theorem}{Theorem}
\newtheorem{lemma}{Lemma}
\begin{document}


\begin{frontmatter}

\title{
A Reduced-Order Model for Nonlinear Radiative Transfer Problems
Based on Moment Equations and POD-Petrov-Galerkin Projection of the Normalized Boltzmann Transport Equation
}

\author[1,2,3]{Joseph M. Coale}

\author[2,4]{Dmitriy Y. Anistratov}

\address[1]{Computational Physics and Methods Group, Los Alamos National Laboratory, Los Alamos, NM}
\address[2]{Department of Nuclear Engineering, North Carolina State University, Raleigh, NC}
\address[3]{jmcoale@lanl.gov}
\address[4]{anistratov@ncsu.edu}


\begin{abstract}
A data-driven projection-based reduced-order model (ROM) for nonlinear thermal radiative transfer (TRT) problems is presented.
The TRT ROM is formulated  by (i) a hierarchy of low-order quasidiffusion  (aka variable Eddington factor) equations for moments of the radiation intensity and (ii) the normalized Boltzmann transport equation (BTE).
The multilevel system of moment equations is derived by projection of the BTE  onto a sequence of subspaces which represent elements of the phase space of the problem.
Exact closure for the moment equations is provided by the Eddington tensor.
A Petrov-Galerkin (PG) projection of the normalized BTE is formulated using a proper orthogonal decomposition (POD) basis
 representing
the normalized radiation intensity over the whole phase space and time.
The Eddington tensor  linearly depends on the  solution of the normalized BTE.
By linear superposition of the POD basis functions, a low-rank expansion of the Eddington tensor is constructed with coefficients defined by the PG  projected normalized BTE.
The material energy balance  (MEB)  equation is coupled with the effective grey low-order equations  which exist on the same dimensional scale as the MEB equation.
The resulting TRT ROM is structure and asymptotic preserving.
A detailed analysis of the ROM is performed on the classical Fleck-Cummings (F-C) TRT multigroup test problem in 2D geometry.
Numerical results are presented to demonstrate the ROM's effectiveness in the simulation of radiation wave phenomena.
The ROM is shown to produce solutions with sufficiently high accuracy while using low-rank approximation of the normalized BTE solution.
Essential physical characteristics of supersonic radiation wave are preserved in the ROM solutions.

\end{abstract}

\begin{keyword}
Boltzmann transport equation,
high-energy density physics,
nonlinear PDEs,
variable Eddington factor,
proper orthogonal decomposition,
POD-Petrov-Galerkin projection
\end{keyword}

\end{frontmatter}


%
%
\section{Introduction}
%
%
In this paper we formulate and analyze a data-based reduced-order model (ROM) aimed at solving
problems of thermal radiative transfer (TRT). TRT problems give a fundamental description of complex phenomena whose primary mode of energy redistribution is radiative transfer (RT).
Phenomena with significant RT effects can be found in a wide spectrum of different fields including high-energy-density physics, plasma physics, astrophysics, atmospheric and ocean sciences, and fire and combustion physics \cite{zel-1966,mihalas-FRH-1984,shu-astro,thomas-stamnes-atm,faghri-sunden-2008,drake-hedp}.

The models that describe these phenomena are typically made up of complex multiphysical systems of partial differential equations (e.g. radiation hydrodynamic problems). Included in these models is the Boltzmann transport equation (BTE) which describes the propagation of photon radiation through matter. Some of the essential challenges associated with the numerical simulation of these models includes their (i) strong nonlinear effects, (ii) tight coupling between equations, (iii) multiscale characterization in space-time and (iv) high-dimensionality. Note the fundamental TRT problem which forms a basic model of the phenomena in question shares all of these challenges.

The issue of high-dimensionality in particular can be one of the largest computational challenges
to overcome, and in many cases is attributable to the included BTE.
The BTE usually resides on a higher-dimensional space than the other multiphysical equations it becomes coupled to. The BTE solution is a function of 7 independent variables in 3D geometry describing dependence on spatial position, time, photon frequency (photon energy)
and direction of photon motion. Therefore to reasonably resolve the BTE solution in phase space and time on a discrete grid, a massive number of degrees of freedom (DoF) must be used. As an example, consider resolving the BTE solution with just 100 discrete points over each independent variable. This results in $10^{14}$ DoF that must be used in the numerical simulation. The high-dimensionality imposed by the BTE can thus place an enormous computational burden and memory requirement on the simulation. It is for this reason that ROMs for the BTE are typically used in massive multiphysical simulations, since reducing the dimensionality of the BTE will in turn reduce the dimensionality of the entire multiphysics model.

%
%

There exists a class of
ROMs for the BTE  based on moment equations that are well understood and commonly  used in practice. Some of the most widely used are diffusion-based ROMs including flux-limited diffusion, $P_1$ and $P_{1/3}$ models \cite{olson-auer-hall-2000,morel-2000,simmons-mihalas-2000}. There is also the class of variable Eddington factor (VEF)
ROMs which use approximations of the Eddington tensor defined by the first two moments of the BTE solution
\cite{pomraning-1969,levermore-1984,korner-janka-a&a-1992,su-fried-larsen-ttsp-2001}.
$M_N$ type models use a maximum-entropy closure for a system of $N$ moment equations \cite{levermore-1996,hauck-2011,hauck-2012}.
Note that the Minerbo model is $M_1$ \cite{minerbo-1978}.
Other commonly used models apply Kershaw, Wilson, Livermore closures
\cite{levermore-1984,kershaw-1976,wilson-1970,m1-2017}.
All of these ROMs are able to effectively reduce the dimensionality of problems involving some particle transport component and thus also reduce the associated computational requirements to find their solution. As such they are useful in many applications where
available computational power is the limiting factor in numerical simulation.
The advantage of such ROMs is that closures for  the moment equations
do not require a priori knowledge of  problem and its solution.
The essential issue with  these ROMs stems from the limitations to their solution accuracy
resulting from approximations  applied to formulate closures.

In order to generate high-fidelity solutions to problems involving the BTE while maintaining low computational requirements, new and advanced types of ROMs
must be formulated.
One of the most promising avenues towards the development of such advanced classes of ROMs is through the use of data-driven methodologies.
Data-driven ROMs can be created by collecting databases of solutions to a target class of problems and then using a data-driven technique of dimensionality reduction to formulate a problem-specific, low-dimensional description of the original full-order model (FOM) \cite{lucia-beran-silva-2004,hastie-tibshirani-friedman-2009,benner-gugercin-willcox-2015,brunton-kutz-2019,mor-vol2-2020}.
Since these models are created using known data about their target class of problems, they can produce highly accurate solutions.
Some of the available methods for formulating data-based approximations and projections include (i) the proper orthogonal decomposition (POD) (a.k.a. principle component analysis (PCA) or the Karhunen-Lo\`eve expansion) \cite{lumley-1967,sirovich-1987,sirovich-1989,aubry-1991,berkooz-holmes-lumley-1993,holmes-1996,volkwein-1999},
(ii) the dynamic mode decomposition (DMD) \cite{rowley-2009,schmid-2010,tu-rowley-2014,williams-kevrekidis-rowley-2015,smith2022variable,li2022dynamic}, (iii) deep neural networks (DNNs) \cite{goodfellow-bengio-courville-2016},
(iv) the proper generalized decomposition (PGD) \cite{chinesta-2011}, (v) balanced truncation \cite{moore-1981} and (vi) reduced basis methods \cite{quarteroni-2015}.

In recent years, many data-driven ROMs have been developed using these techniques for linear problems involving the BTE.
Dimensionality reduction in the angular variable has been formulated using both the POD and reduced-basis methods \cite{buchan-2015,tencer-2017,soucasse-2019,peng-chen-cheng-li-2022,hughes-buchan-2022}.
POD-Petrov-Galerkin projections have been formulated for the steady-state BTE in 1D geometry \cite{behne-ragusa-morel-2019,behne-ragusa-tano-2021}.
A POD-Galerkin projection in both space and time has also been formulated for the BTE \cite{choi-2021}.
The PGD has been used to seperate variables in the BTE solution in several geometries \cite{prince-ragusa-2019-1,Dominesey-2019,prince-ragusa-2019-2}.
A low-rank manifold projection technique has been used in constructing ROMs for time-dependent problems \cite{peng-mcclarren-frank-2019,peng-mcclarren-tans2019,peng-mcclarren-frank-2020,peng-mcclarren-2021}.
Surrogate models for the BTE have been created using the DMD \cite{hardy-morel-ahrens-2019} and various neural network architectures \cite{pozulp-2019,pozulp-brantley-palmer-vujic-2021,elhareef-wu-ma-2021}.
Neural networks have been used to obtain closures for $P_N$-type systems of moment equations of the BTE \cite{huang-I-2022,huang-II-2021}.
Data-driven ROMs have also been created for particle transport problems in nuclear reactor-physics applications
\cite{kord-pne-1986,kord-1995,sanchez-pne-2009,kord-pne-2017}, including (i) pin-by-pin reactor calculations \cite{cherezov-sanchez-joo-2018}, (ii) reactor kinetics \cite{alberti-palmer-2019,alberti-palmer-2020}, (iii) molten salt fast reactor problems \cite{german-ragusa-2019-1,german-ragusa-2019-2,german-tano-ragusa-2021}, (iv) problems with feedback from delayed neutron precursors \cite{elzohery-roberts-2021-1,elzohery-roberts-2021-2,elzohery-roberts-2021-3}, (v) problems with domain decomposition \cite{phillips-2021}, and (vi) for generation of neutron flux and cross sections for light water reactors \cite{Dominesey-2022}.

Furthermore, several data-based ROMs have been developed to date for the nonlinear problems of radiative transfer considered here.
Dimensionality along the frequency variable has been formulated with the POD for $SP_1$ radiative heat transfer calculations \cite{pinnau-schulze-2007}, and for radiative heat transfer in plasma applications using an optimization problem \cite{fagiano-2016}.
ROMs for thermal analysis of spacecraft have been created with POD-Galerkin and trajectory piecewise-linear methods \cite{qian-wang-song-pant-2015}.
ROMs for grey nonlinear radiation diffusion problems have been formulated with the PGD \cite{alberti-palmer-2018}, POD \cite{alberti-jqsrt-2022} and modal identification method \cite{girault-2021}.
A grey ROM for multigroup TRT problems was developed using POD-based representations of frequency averaged opacities and other coefficients \cite{jc-dya-tans2019}.
POD-based ROMs for radiating fluids have been constructed by applying a linearized estimation of radiative transfer effects based on POD data of non-radiating fluids \cite{soucasse-podvin-riviere-soufiani-2020,soucasse-podvin-riviere-soufiani-2021}.

%
%
In this paper we formulate a projection-based
ROM for the fundamental TRT problem defined by the BTE coupled with the material energy balance (MEB) equation that describes energy exchange between radiation and matter.
This problem models a supersonic radiative flow \cite{Moore-2015}, and serves as a useful platform for the development and testing of new computational methodologies.
The presented  ROM is developed by combination of
low-order equations for the moments of the specific intensity of radiation
and POD-Petrov-Galerkin projection techniques.
The resulting TRT ROM is structure and asymptotic preserving.
The formulation of ROM is based on a multilevel system of equations derived by means of the nonlinear projection approach
which can be interpreted as a nonlinear method of moments
\cite{gol'din-1964,gol'din-1972,Goldin-sbornik-82,dya-vyag-1993,krommes-jpp-2018}.
The BTE is projected sequentially over  elements of the phase space of the problem.
The low-order equations for the angular moments of the intensity are exactly closed with the BTE via the Eddington tensor  (aka quasidiffusion tensor) which is a linear-fractional function of the BTE solution \cite{gol'din-1964,auer-mihalas-1970}.
The moment equations of lowest dimensionality are the effective grey low-order equations  formulated for the angular moments integrated over photon frequency.
The exact  closures for these low-order equations  are defined by various spectrum average coefficients.
The effective grey low-order equations are coupled with multiphysics macroscopic equations at their dimensional scale.
This process constructs a multilevel set of moment equations of the BTE with exact closures and constitutes an initial reduction of dimensionality to the TRT problem with no approximation.
The nonlinear projection approach has been successfully used for dimensionality reduction of  particle transport problems and
developing efficient  iterative methods for solving BTE as well as  multiphysics problems in high-energy density physics and reactor-physics applications \cite{Goldin-sbornik-82,winkler-norman-mihalas-85,PASE-1986,dya-aristova-vya-mm1996,aristova-vya-avk-m&c1999,dya-vyag-nse-2011,at-dya-nse-2014,lrc-dya-pne-2017,dya-jcp-2019,anistratov-jcp-2021}.

The multilevel nonlinear moment equations
can be used as a framework for the development of efficient ROMs for nonlinear multiphysics problems involving
the BTE by formulating data-driven closures to moment equations.
Recently, a ROM was developed for the TRT problem that uses  an equation-free  approximation of the Eddington tensor using direct POD and DMD expansions of Eddington tensor data generated during the offline stage \cite{jc-dya-jqsrt-2023}.
This methodology  was applied to develop a parametrized ROM for multigroup TRT problems in 2D Cartasian geometry.
Another kind of ROM was developed by means of
a POD-Galerkin projection of the BTE. The solution of the projected BTE yields a low-rank
expansion of the
specific radiation intensities in POD modes,
which is then used as an averaging function
to define approximation of the  Eddington tensor \cite{jc-dya-m&c2021}.

In this study, we  apply a POD-Petrov-Galerkin projection to the modified BTE
formulated for a specific intensity normalized by the zeroth intensity moment in angle.
Hereafter we refer to this equation as the normalized BTE (NBTE).
The Eddington tensor linearly depends on the normalized intensity.
This enables us to form a direct expansion of the Eddington tensor in POD modes of the normalized intensities.
By construction, this expansion of the Eddington tensor is defined in terms of angular moments of POD basis.

The modified BTE
formulated with various normalizations has been applied
to derive approximations of the Eddington tensor to be used in VEF methods \cite{pomraning-1969,gnedin-abel-2001} and flux-limited diffusion theories \cite{levermore-pomraning-1981}.
Hybrid Monte-Carlo methods based on the NBTE
 have been able to achieve large variance reductions in the solution for deep penetration problems compared to traditional Monte-Carlo methods \cite{becker-wollaber-larsen-2007}. Anisotropic diffusion ROMs for neutron transport and TRT problems have also been formulated
with a modified BTE whose solution takes the form of a shape function similar to the NBTE solution \cite{johnson-larsen-2011,trahan-larsen-2011}.

This paper presents a detailed analysis of the fundamental properties of the new ROM for nonlinear radiative transfer problems formulated with the POD-Petrov-Galerkin projected NBTE and hierarchy of moment equations.
We look to determine how accurately the ROM can reproduce essential characteristics and physical effects of the target class of
nonlinear TRT problems.
This will depend on the rank of POD expansion of the normalized intensities (and therefore the Eddington tensor), the expansion coefficients for which are obtained from the solution to the projected NBTE.
To  construct a projection for
the NBTE, a spatial discretization scheme  is developed
that is algebraically consistent with  the simple corner balance scheme for the BTE \cite{adams-1997}.
The POD-Petrov-Galerkin projection of the discretized NBTE is derived using a weighted inner product associated with this scheme for the NBTE.
Application of the specific intensity normalized by its zeroth moment yields a POD basis which can be used to construct a direct expansion of the Eddington tensor via superposition of the POD modes.
The normalization of the intensities also creates a naturally bounded function, which helps to mitigate  numerical issues associated with the linear algebra computations needed to find POD basis functions.

The remainder of this paper is organized as follows.
The TRT problem is specified in Sec. \ref{sec:trt}, followed by a detailed overview of the NBTE and
multilevel quasidiffusion method in Secs. \ref{sec:nbte} and \ref{sec:quasidiffusion}.
The discretization scheme for the NBTE is derived in Sec. \ref{sec:nbte_disc}.
The POD-Petrov-Galerkin projection scheme for the NBTE is then derived in Sec. \ref{sec:nbte_proj}.
Discretization of moment equations is described in Sec. \ref{sec:loqd_disc}.
Sec. \ref{sec:rom} formulates the ROM in its entirety.
Numerical results for the ROM are presented in Sec. \ref{sec:numerical_results}, and conclusions are drawn in Sec. \ref{sec:conclusion}.

%
%
\section{Thermal Radiative Transfer Problem} \label{sec:trt}
We consider the TRT problem given by the multigroup BTE \cite{zel-1966,mihalas-FRH-1984}
\begin{gather}    \label{bte_mg}
	\frac{1}{c}\dv{I_g}{t} + \boldsymbol{\Omega}\cdot\grad I_g + \varkappa_g(T)I_g = \varkappa_g(T)B_g(T),\\
	\boldsymbol{r}\in \Gamma,\quad \boldsymbol{\Omega}\in \mathcal{S}_{\Omega},\quad
	g \in \mathbb{N}(G), \quad t \in [0,t^\text{end}]
	\nonumber
\end{gather}
with the boundary condition
\begin{equation}  \label{bte_bc}
	I_g\big|_{\boldsymbol{r} \in\partial\Gamma} = I_g^{\text{in}}, \quad  \boldsymbol{\Omega}\cdot\boldsymbol{n}_\Gamma<0 \, ,
\end{equation}
and the initial condition
\begin{equation}  \label{bte_ic}
	I_g \big|_{t=0}=I_g^0 \, ,
\end{equation}

\noindent and the MEB equation,
which models energy exchange between radiation and matter
\begin{gather}
	\dv{\varepsilon(T)}{t} = \sum_{g=1}^{G} \int_{4\pi} \varkappa_g(T)\big( I_g - B_g(T) \big)\ d\Omega,\quad 	\label{meb} \\
	T|_{t=0}=T^0.  \label{meb_ic}
\end{gather}

\noindent Here $g$ is the index of the frequency group $\nu\in[\nu_g,\nu_{g+1}]$,
 $G$ is the number of frequency groups, $I_g=I_g(\boldsymbol{r},\boldsymbol{\Omega},t)=
\int_{\nu_g}^{\nu_{g+1}}I(\boldsymbol{r},\boldsymbol{\Omega},\nu,t) d \nu$ is the group intensity of radiation,
$\boldsymbol{r}\in\real^3$ is spatial position,
$\boldsymbol{\Omega}$ is the unit vector in the direction of particle motion,
$t$ is time,
$\varepsilon$ is the material energy density, $\varkappa_g$ is the material opacity  for the group $g$,
$T=T(\boldsymbol{r},t)$ is the material temperature,
and $B_g$ is the group Planckian function given by
\begin{equation} \label{eq:planck}
B_g(T) = \frac{2h}{c^2}  \int_{\nu_{g}}^{\nu_{g+1}}   \frac{\nu^3 d\nu}{e^{\frac{h\nu}{kT}}-1}
\end{equation}
$\nu$ is the photon frequency, $c$ is the speed of light, $h$ is Planck's constant, $k$ is the Boltzmann constant.
We denote the unit sphere
$ \mathcal{S}_{\Omega}=
\{ \boldsymbol{\Omega}\in\real^3\ :\ |\boldsymbol{\Omega}|=1 \}$,
$\Gamma\subset\real^3$ is the spatial domain,
$\partial\Gamma$ is the boundary surface of $\Gamma$, and $\boldsymbol{n}_\Gamma$ is the outward-facing unit normal vector to $\partial\Gamma$.
The TRT problem \eqref{bte_mg} and  \eqref{meb}   neglects photon scattering, material motion and heat conduction.

%
%
\section{Normalized Boltzmann Transport Equation} \label{sec:nbte}

To formulate the   multigroup NBTE
we divide the  BTE
(Eq. \eqref{bte_mg})  by
the zeroth angular moment of the group intensity
\begin{equation}\label{phi}
\phi_g=\int_{4\pi}I_g d\Omega \,
\end{equation}
and derive the transport equation for the normalized intensity defined by \cite{becker-wollaber-larsen-2007}
\begin{equation}\label{n-intensity}
\bar{I}_g=\frac{I_g}{\phi_g} \, .
\end{equation}
The resulting multigroup NBTE  for $\bar{I}_g$  given by
\begin{gather} \label{eq:nbte}
	\frac{1}{c}\dv{\bar{I}_g}{t} + \boldsymbol{\Omega}\cdot\boldsymbol{\nabla} \bar{I}_g + \hat{\varkappa}_{g}(T,\phi_g) \bar{I}_g = \varkappa_{g}(T)\bar{B}_g(T,\phi_g),
  \ g \in \mathbb{N}(G) \, ,
\end{gather}
\noindent  where
\begin{equation}
\hat{\varkappa}_{g} = \varkappa_{g} + \frac{1}{c}\dv{\ln(\phi_g)}{t} + \boldsymbol{\Omega}\cdot\boldsymbol{\nabla}\ln(\phi_g) \, ,
\end{equation}
and
\begin{equation}
\bar{B}_g = \frac{B_g}{\phi_g} \, .
\end{equation}
The boundary and initial conditions for  the NBTE are defined as follows:
\begin{equation}  \label{nbte_bc}
	\bar{I}_g\big|_{\boldsymbol{r} \in\partial\Gamma} = \frac{I_g^{\text{in}}}{\phi_g}\bigg|_{\boldsymbol{r} \in\partial\Gamma} \, , \quad  \boldsymbol{\Omega}\cdot\boldsymbol{n}_\Gamma<0 \, ,
\end{equation}
 \begin{equation}  \label{nbte_ic}
 	\bar{I}_g \big|_{t=0}=\frac{I_g^0}{\phi_g} \bigg|_{t=0} \, .
 \end{equation}

The structure of the NBTE is similar to that of the BTE.
 $\hat \varkappa_g$ can be interpreted as a modified opacity.
  $\bar{B}_g $ is the scaled Planckian function.
  We note that the solution of the NBTE is an angular  shape function in the group $g$ such that
 \begin{equation}  \label{nbte_ic}
 	\int_{4 \pi} \bar{I}_g  d \Omega = 1 \, .
 \end{equation}

%
%
\section{The Multilevel System of Moment Equations \label{sec:quasidiffusion}}
A hierarchy  of moment equations is formulated  by projecting the BTE
(Eq. \eqref{bte_mg})  onto a sequence of subspaces  in two stages \cite{Goldin-sbornik-82,PASE-1986}.
In first stage of projection, the BTE is integrated over
$\boldsymbol{\Omega}\in \mathcal{S}_{\Omega}$
with weights $1$ and $\boldsymbol{\Omega}$
to obtain  the
multigroup  low-order equations  given by \cite{gol'din-1964,gol'din-1972,auer-mihalas-1970}
\begin{subequations} \label{gp-loqd}
	\begin{gather}
		\dv{E_g}{t} + \grad\cdot\boldsymbol{F}_g + c\varkappa_g(T)E_g = 4\pi\varkappa_g(T)B_g(T), \label{gp_ebal}\\
		\frac{1}{c}\dv{\boldsymbol{F}_g}{t} +
		c\grad\cdot (\boldsymbol{\mathfrak{f}}_g E_g)
 		+ \varkappa_g(T) \boldsymbol{F}_g = 0, \label{gp_mbal}
	\end{gather}
\end{subequations}

\noindent whose solution is the multigroup radiation energy density $E_g=\frac{1}{c}\int_{4\pi}I_g d\Omega$ and flux $\boldsymbol{F}_g=\int_{4\pi}\boldsymbol{\Omega}I_g d\Omega$.
The moment equations \eqref{gp-loqd} are exactly closed with the Eddington tensor
\begin{equation}
	\boldsymbol{\mathfrak{f}}_g   \stackrel{\Delta}{=}
\frac{ \int_{4\pi} \boldsymbol{\Omega}\otimes\boldsymbol{\Omega}  {I}_g d\Omega}
{ \int_{4\pi}  {I}_g d\Omega} \, .
\end{equation}
Hereafter we refer to the moment equations \eqref{gp-loqd} as the multigroup low-order quasidiffusion (LOQD) equations
(aka VEF equations).
Note that  the zeroth intensity moment is defined by the solution of the
LOQD equations \eqref{gp-loqd} as follows:
\begin{equation}
 \phi_g=cE_g \, .
\end{equation}

The components of the Eddington tensor are linear fractional functions of ${I}_g$
and hence nonlinearly depend on the solution of the BTE (Eq. \eqref{bte_mg}).
The multigroup LOQD equations can be closed with the NBTE via application of the Eddington tensor defined in terms of the normalized intensity $\bar I_g$:
  \begin{equation}
\boldsymbol{\mathfrak{f}}_g   =
 \mathcal{H} \bar I_g   \, .
\end{equation}
where the operator is defined by
\begin{equation}\label{H-oper}
 \mathcal{H} \cdot \stackrel{\Delta}{=}  \int_{4\pi} d\Omega  \boldsymbol{\Omega}\otimes\boldsymbol{\Omega} \cdot \, ,
\end{equation}
In this case,  $\boldsymbol{\mathfrak{f}}_g$
linearly depends on  the solution of the NBTE  (Eq. \eqref{eq:nbte}).
Formulation of the closure based on a linear relationship with the high-order solution has a distinct advantage.
When the ROM is formulated in Sec. \ref{sec:rom}, this feature is what
enables us to formulate a direct expansion of the Eddington tensor in terms of the POD  basis functions of $\bar I_g$.

We apply the boundary conditions for the multigroup LOQD equations \eqref{gp-loqd} defined as follows \cite{miften-larsen-1993}:
\begin{equation}
	\boldsymbol{n}_\Gamma\cdot\boldsymbol{F}_g\big|_{r\in\partial\Gamma} =
c \beta_gE_g\big|_{\boldsymbol{r}\in\partial\Gamma} + 2F_g^\text{in},\quad
\end{equation}
where the boundary factor defined with the solution of the NBTE is given by
\begin{equation}
\beta_g  = \mathcal{B} \bar{I}_g\big|_{ \boldsymbol{r}\in\partial\Gamma} \, ,
\end{equation}
where
\begin{equation} \label{beta-oper}
\mathcal{B} \cdot   \stackrel{\Delta}{=} \int_{4\pi} d\Omega |\boldsymbol{n}_\Gamma\cdot\boldsymbol{\Omega}|\  \cdot      \, .
\end{equation}
This boundary factor also has a linear
representation in terms of $\bar{I}_g$.
The initial conditions are
\begin{equation}
	E_g\big|_{t=0} = E_g^0,\quad \boldsymbol{F}_g\big|_{t=0} = \boldsymbol{F}_g^0,
\end{equation}
where
\begin{equation}
	E_g^0=\frac{1}{c}\int_{4\pi}I_g^0 d\Omega,\quad \boldsymbol{F}_g^0=\int_{4\pi}\boldsymbol{\Omega}I_g^0 d\Omega, \quad {F}_g^\text{in}=\int_{\boldsymbol{n}_\Gamma\cdot\boldsymbol{\Omega}<0}(\boldsymbol{n}_\Gamma\cdot\boldsymbol{\Omega}) I_g \big|_{r\in\partial\Gamma}\ d\Omega.
\end{equation}

In the second stage of projection, Eqs. \eqref{gp-loqd} are summed over all frequency groups to derive the
{\it effective grey} LOQD equations
 given by \cite{PASE-1986}
\begin{subequations}	
	\begin{gather}
		\dv{E}{t} + \boldsymbol{\nabla}\cdot\boldsymbol{F} +
		c\ip{\varkappa}_E E
		=
		c\ip{\varkappa}_B
		 a_RT^4,\\
		\frac{1}{c}\dv{\boldsymbol{F}}{t} + c\boldsymbol{\nabla} \cdot
		(  \ip{\boldsymbol{\mathfrak{f}}}_{E} E)  +
		\ip{\mathbf{K}}_{F} \boldsymbol{F}
		+ \bar{\boldsymbol{\eta}}E = 0,
		\label{eq:gr-loqd}
	\end{gather}
	\label{gqd}
\end{subequations}

\noindent whose solution is the total radiation energy density $E=\sum_{g=1}^GE_g$ and flux $\boldsymbol{F}=\sum_{g=1}^G\boldsymbol{F}_g$. The Eqs. \eqref{gqd} are exactly closed with
the spectrum-averaged Eddington tensor
\begin{equation}
\ip{\boldsymbol{\mathfrak{f}}}_E = \frac{1}{\sum_{g=1}^G E_g}
\sum_{g=1}^G \boldsymbol{\mathfrak{f}}_g  E_g \, , \quad
\end{equation}
and the averaged opacities
\begin{equation}
\ip{\varkappa}_E = \frac{\sum_{g=1}^G \varkappa_g E_g}{\sum_{g=1}^G E_g} \, , \quad
\ip{\varkappa}_B = \frac{\sum_{g=1}^G \varkappa_g B_g}{\sum_{g=1}^G B_g} \, , \quad
\ip{\varkappa}_{F_{\gamma}} = \frac{\sum_{g=1}^G \varkappa_g | F_{\gamma,g}|}{\sum_{g=1}^G  | F_{\gamma,g}|} \, ,
\end{equation}
\begin{equation}
\ip{\mathbf{K}}_{F}
=
\bigoplus_{\gamma=1}^{\rho}\ip{\varkappa}_{F_{\gamma}}  \, ,
\end{equation}
where $\rho$ is the number of spatial dimensions.
The compensation term
is defined by
\begin{equation}
	\bar{\boldsymbol{\eta}}= \frac{\sum_{g=1}^{G} (\varkappa_g -  \ip{\mathbf{K}}_{F}    )\boldsymbol{F}_g}{\sum_{g=1}^GE_g}  \, .
\end{equation}

\noindent Boundary conditions for Eqs. \eqref{gqd} are defined by
\begin{equation}
	\boldsymbol{n}_\Gamma\cdot\boldsymbol{F}\big|_{r\in\partial\Gamma} =
 c \ip{\beta}_E E\big|_{r\in\partial\Gamma} + 2F^\text{in},
\end{equation}
where
\begin{equation}
\ip{\beta}_E   = \frac{\sum_{g=1}^G \beta_g E_g}{\sum_{g=1}^G E_g} \, ,
\end{equation}
and the initial conditions are
\begin{equation}
	E\big|_{t=0} = E^0,\quad \boldsymbol{F}\big|_{t=0} = \boldsymbol{F}^0,
\end{equation}
where
\begin{equation}
	E^0=\sum_{g=1}^G E_g^0,\quad \boldsymbol{F}^0=\sum_{g=1}^G \boldsymbol{F}_g^0, \quad {F}^\text{in}=\sum_{g=1}^G{F}_g^\text{in}.
\end{equation}

The MEB equation
(Eq. \eqref{meb}) is coupled with the effective grey LOQD equations after being recast in effective grey form
\begin{gather}
	\dv{\varepsilon(T)}{t} = c \Big(\ip{\varkappa}_E E -  \ip{\varkappa}_Ba_RT^4\Big) ,
	\label{grey_meb}
\end{gather}

\noindent creating the coupled {\it effective grey problem}  (Eqs. \eqref{gqd} and \eqref{grey_meb}).

To summarize, the multilevel system of  equations for solving the TRT problem
 consists  of the following high-order and low-order equations in operator form:
\begin{itemize}
\item the high-order equation for $\bar{I}_g$ defined by the   multigroup NBTE (Eq. \eqref{eq:nbte})
  \begin{equation}\label{nbte-o-form}
  \frac{1}{c}\frac{\partial \bar{I}_g}{\partial t} + \mathcal{\bar L}_g   \bar{I}_g  =  \bar q_g   \, ,
  \ g \in \mathbb{N}(G) \, ,
 \end{equation}
  \begin{equation}
\mathcal{\bar L}_g = \mathcal{\bar L}_g [T,\phi_g]  \, , \quad
  \bar q_g =\bar q_g (T,\phi_g)  \, ,
  \end{equation}
  \item the  multigroup LOQD equations for $E_g$ and $\boldsymbol{F}_g$  (Eqs. \eqref{gp-loqd})
\begin{equation} \label{mloqd-o-form}
 \frac{\partial \boldsymbol{\varphi}_g}{\partial t} + \mathcal{M}_g \boldsymbol{\varphi}_g  = \mathbf{q}_g \, , \
\boldsymbol{\varphi}_g  =  \begin{pmatrix} E_g \\ \boldsymbol{F}_g\end{pmatrix}\, ,
 \ g \in \mathbb{N}(G) \, ,
\end{equation}
\begin{equation}\label{mloqd-oper}
\mathcal{M}_g = \mathcal{M}_g[T, \boldsymbol{\mathfrak{f}}_g, \beta_g] \, , \quad
  \mathbf{q}_g=\mathbf{q}_g[T] \, ,
 \end{equation}
\item  the  effective grey  LOQD equations for $E$ and $\boldsymbol{F}$ (Eqs. \eqref{gqd})   given by
\begin{equation} \label{gloqd-o-form}
 \frac{\partial \boldsymbol{\hat \varphi}}{\partial t} +  \mathcal{\hat M} \boldsymbol{\hat \varphi} = \mathbf{\hat q} \, ,
\quad
 \boldsymbol{\hat \varphi} = \begin{pmatrix} E_g \\ \boldsymbol{F}_g\end{pmatrix} \, ,
\end{equation}
\begin{equation}\label{gloqd-oper}
 \mathcal{\hat  M}  = \mathcal{\hat M}[ T,\boldsymbol{\varphi}_g, \boldsymbol{\mathfrak{f}}_g, \beta_g ] \, , \quad
 \mathbf{\hat q}=\mathbf{\hat q}(T) \, ,
\end{equation}
  \item the   effective grey MEB  equation  (Eq. \eqref{grey_meb}) for $T$ and $E$ defined  by
\begin{equation}\label{grey-meb-o-form}
\frac{\partial \varepsilon(T)}{\partial t} =    \mathcal{P}  \boldsymbol{\hat \varphi} +   \tilde q  \, ,
\end{equation}
\begin{equation}\label{grey-meb-oper}
\mathcal{P}=\mathcal{P}[T,  \boldsymbol{\varphi}_g] \, , \quad \tilde q = \tilde q(T) \, .
\end{equation}
\end{itemize}

%
%
\section{Discretization of the Normalized Boltzmann Transport Equation}\label{sec:nbte_disc}

\begin{figure}
\begin{centering}
\begin{picture}(300,200)
\drawline(25,25)(150,25) 
\drawline (275,175)(25,175)(25,25)
\drawline(275,100)(275,175) 
\drawline(150,25)(150,175)
\drawline(25,100)(275,100)
 \dashline{3}(87.5,25)(87.5,175)
\dashline{3}(25,137.5)(275,137.5)
\dashline{3}(25,62.5)(150,62.5)
\dashline{3}(212.5,100)(212.5,175)
\put(87.5,137.5){\circle{5}}
\put(87.5,137.5){\circle{6}}
\put(90,140){$i$}
\put(212.5,137.5){\circle{5}}
\put(212.5,137.5){\circle{6}}
\put(215,140){$i^{\prime}$}
\put(87.5,62.5){\circle{5}}
\put(87.5,62.5){\circle{6}}
\put(90,65){$i^{\prime\prime}$}
\put(118.75,118.75){\circle*{3}}
\put(110.0,116.75){{\footnotesize $\alpha$}}
\put(150,118.75){\circle*{3}}
\put(134,116.75){{\footnotesize $\alpha$+}}
\put(87.5,118.75){\circle*{3}}
\put(90.5,116.75){{\footnotesize $\alpha$-$\frac{1}{2}$}}
\put(118.75,100){\circle*{3}}
\put(114.75,103.5){{\footnotesize $\alpha$--}}
\put(56.25,118.75){\circle*{3}}
\put(46.25,126.25){ {\footnotesize $\alpha$--1}}
\put(118.75,156.25){\circle*{3}}
\put(110.25,162.25){{\footnotesize $\alpha$+1}}
\put(118.75,137.5){\circle*{3}}
\put(110.25,127.5){{\footnotesize $\alpha$+$\frac{1}{2}$}}
\put(118.75,81.25){\circle*{3}}
\put(113.75,67.25){{\footnotesize $\alpha^{\prime\prime}$}}
\put(181.25,118.75){\circle*{3}}
\put(177.25,126.25){{\footnotesize $\alpha^{\prime}$}}
\end{picture}
\caption{Notations for the SCB scheme in cells and corners.}
\label{scb_corner}
\end{centering}
\end{figure}
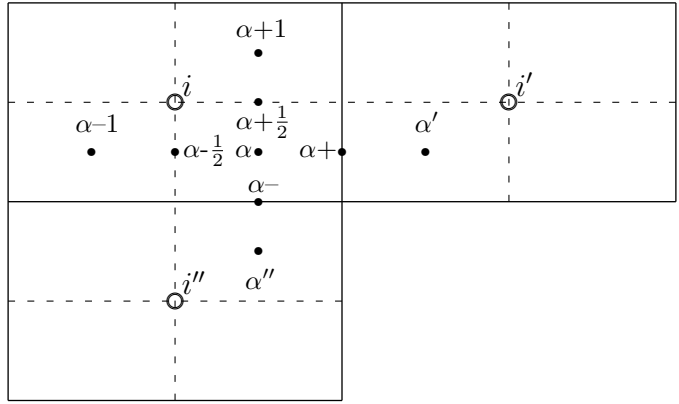
To derive the discretization of the NBTE (Eq. \eqref{eq:nbte}),
we start from approximating the BTE
(Eq. \eqref{bte_mg})
The method of discrete-ordinates (MDO) is used to discretize the BTE with respect to the angular variable.
The backward-Euler (BE) time integration method is the applied discretization in time.
This yields the BTE in semi-continuous form  given by
\begin{gather} \label{bte-be}
	\frac{ 1}{c \Delta t^n}
\Big(  I_{g,m}^n -  I_{g,m}^{n-1} \Big) +
 \boldsymbol{\Omega}_m \cdot \boldsymbol{\nabla} I _{g,m}^n +  \varkappa_{g}^n I _{g,m}^n =  \varkappa_{g}^n  B_g^n \, ,\\
g \in \mathbb{N}(G)\, ,\ n \in \mathbb{N}(N) \, ,  \ m \in \mathbb{N}(M) \, ,\nonumber
\end{gather}
 where $m$ is the index of  discrete direction, $n$ is the index of the time step,  $\Delta t^n = t^n - t^{n-1}$,
 $I_{g,m}^n = I_{g} (\boldsymbol{r},\boldsymbol{\Omega}_m, t^n)$.
To approximate Eq. \eqref{bte-be} in space  we apply the simple corner balance (SCB) scheme \cite{adams-1997}.
We consider 2D problems  with  $T$  defined by its cell-average values.
Fig. \ref{scb_corner} illustrates  elements of rectangular spatial grids showing
the corner $\alpha$ in the $i^{th}$ spatial cell $\mathcal{D}_i$,
its neighbouring corners  $\alpha-1$ and $\alpha+1$ inside the cell, and
corners $\alpha^{\prime}$  and $\alpha^{\prime\prime}$
in the adjacent cells $\mathcal{D}_{i^{\prime}} $ and $\mathcal{D}_{i^{\prime\prime}}$
that share a vertex with corner $\alpha$.
In an  arbitrary polygon cell  $\mathcal{D}_i$ with $C$ number   of corners,
the  SCB scheme for the  BTE
(Eq. \eqref{bte_mg})
is defined by a set of balance equations in each corner given by
 \begin{multline} \label{scb}
  \frac{S_{i,\alpha}}{c\Delta t^n} \Big(I_{g,m,i,\alpha}^n -   I_{g,m,i,\alpha}^{n-1} \Big)  \\
   + \mathbf{\Omega}_m \cdot \Big( \boldsymbol{\ell}_{i,\alpha+} I_{g,m,i,\alpha+}^n  + \boldsymbol{\ell}_{i,\alpha-} I_{g,m,i,\alpha-}^n
 + \boldsymbol{\ell}_{i,\alpha+1/2} I_{g,m,i,\alpha+1/2}^n
  + \boldsymbol{\ell}_{i,\alpha-1/2} I_{g,m,i,\alpha-1/2}^n \Bigr)  \\
  +\varkappa_{g,i}^n  I_{g,m,i,\alpha}^n S_{i,\alpha}
 =   \varkappa_{g,i}^n B_{g,i}^n S_{i,\alpha} \, ,
 \end{multline}
  \begin{equation}
 \boldsymbol{\ell}_{i,\alpha \pm} = \ell_{i,\alpha \pm}  \boldsymbol{n}_{i,\alpha \pm} \, , \quad
  \boldsymbol{\ell}_{i,\alpha \pm1/2} = \ell_{i,\alpha \pm1/2}  \boldsymbol{n}_{i,\alpha \pm1/2} \, ,
 \end{equation}
 \begin{equation*}
 g \in \mathbb{N}(G) \, ,  \ m \in \mathbb{N}(M) \, ,  \ i \in \mathbb{N}(X) \, ,
 \alpha \in \mathbb{N}(C) \, ,
  \end{equation*}
where
$\ell_{i,\alpha \pm}$ and $\ell_{i,\alpha \pm1/2}$ are lengths of corner edges
$\alpha \pm$ and $\alpha \pm1/2$, respectively,
$\boldsymbol{n}_{i,\alpha\pm}$ and $\boldsymbol{n}_{i,\alpha \pm1/2}$ are outward unit normals at these edges,
$S_{i,\alpha}$ is the area of corner $\alpha$,
$I_{g,m,i,\alpha}^n$  is the corner radiation intensity,
 $I_{g,m,i,\alpha\pm}^n$ and $I_{m,i,\alpha\pm1/2}^n$ are corner-edge  values.
The  SCB auxiliary relations are defined by
simple  averaging for corner surfaces inside the cell
\begin{equation} \label{scb-a1}
I_{g,m,i,\alpha\pm1/2}^n = \frac{1}{2}(I_{g,m,i,\alpha}^n + I_{g,m,i,\alpha \pm 1}^n) \, ,
\end{equation}
and by upstream values at interfaces between cells
\begin{equation} \label{scb-a2}
I_{g,m,i,\alpha\pm}^n=
\begin{cases}
I_{g,m,i,\alpha}^n\, , \ & \mathbf{\Omega}_m  \cdot \boldsymbol{n}_{i,\alpha\pm} > 0 \, ,\\
 I_{g,m,i^*,\alpha^*}^n\, , \ & \mathbf{\Omega}_m   \cdot \boldsymbol{n}_{i,\alpha\pm} < 0 \, ,
\end{cases}
\ \
i^*=
\begin{cases}
i^{\prime} & \mbox{for} \ \alpha+ \, , \\
i^{\prime\prime} & \mbox{for} \ \alpha- \, ,
\end{cases}
\ \
\alpha^*=
\begin{cases}
\alpha^{\prime} & \mbox{for} \ \alpha+ \, , \\
\alpha^{\prime\prime} & \mbox{for} \ \alpha\!- \, .
\end{cases}
\end{equation}
Using the auxiliary conditions \eqref{scb-a1} and  \eqref{scb-a2} in the corner balance equations \eqref{scb}
the SCB scheme can be cast in terms of corner intensities  in the following operator form:
\begin{equation} \label{scb-o-form}
  \mathcal{T}^n \Big(\boldsymbol{I}_h^n -   \boldsymbol{I}_h^{n-1} \Big)
  +\mathcal{L}_h^n\boldsymbol{I}_h^n =\mathbf{Q}_h^n \, ,
\end{equation}
 where $\boldsymbol{I}_h^n \in\   \real^{d}$ is the vector of cell-corner  intensities over all phase space at $t^n$,
 $d=GMXC$ is the number of degrees of freedom of the NBTE solution at a time layer,
  $\mathbf{Q}_h^n \in  \real^{d} $ is the vector of corner  Planckian source terms, and the operator $\mathcal{T}^n$ multiplies its operand elementwise in cell, corner locations $(i,\alpha)$ by $\frac{S_{i,\alpha}}{c\Delta t^n}$.

On the next step of the derivation, we   define the  grids functions of the normalized intensity in the $i^{th}$ cell as follows:
\begin{multline} \label{eq:nbte_gfunc}
	\bar I_{g,m,i,\alpha}^n = \frac{I_{g,m,i,\alpha}^n}{\phi_{g,i,c}^n} \, , \quad
	\bar I_{g,m,i,\alpha \pm 1/2}^n = \frac{I_{g,m,i,\alpha \pm 1/2}^n}{\phi_{g,i,c}^n} \, , \quad \\
	\bar I_{g,m,i,\alpha+}^n = \frac{I_{g,m,i,\alpha+}^n}{\phi_{g,i,\alpha}^n} \, , \quad
	\bar I_{g,m,i,\alpha-}^n = \frac{I_{g,m,i,\alpha-}^n}{\phi_{g,i,\alpha-1}^n} \, ,
\end{multline}
and the normalized cell-average Planckian function
\begin{equation}
	\bar B_{g,i}^n = \frac{B_{g,i}^n}{\phi_{g,i,c}^n} \, ,
\end{equation}
where
\begin{equation}
	\phi_{g,i,c}^n = \frac{1}{\sum_{\alpha=1}^{C}S_{i,\alpha}}\sum_{\alpha=1}^{C}  S_{i,\alpha}\sum_{m=1}^{M}I_{g,m,i,\alpha}^nw_m
\end{equation}
is the cell-average zeroth moment  of the intensity in $\mathcal{D}_i$ and
\begin{equation}
\phi_{g,i,\alpha}^n = \frac{1}{2 }\sum_{m=1}^{M}  \big( I_{g,m,i,\alpha+}^n + I_{g,m,i,(\alpha+1)-}^n \big)w_m \,
\end{equation}
is the face-average zeroth moment  of the intensity over the face shared by corners $\alpha$ and $\alpha+1$.
Here $w_m$ is the quadrature weight.
The equation \eqref{eq:nbte_gfunc} leads to
\begin{multline} \label{eq:nbte_gfunc-2}
	I_{g,m,i,\alpha}^n =\bar I_{g,m,i,\alpha}^n \phi_{i,c}^n \, , \quad
	 I_{g,m,i,\alpha \pm 1/2}^n =  \bar I_{g,m,i,\alpha \pm 1/2}^n  \phi_{i,c}^n  \, , \quad \\
	I_{g,m,i,\alpha+}^n = \bar I_{g,m,i,\alpha+}^n \phi_{i,\alpha}^n  \, , \quad
	I_{g,m,i,\alpha-}^n = \bar I_{g,m,i,\alpha-}^n \phi_{i,\alpha-1}^n \, .
\end{multline}
Substituting Eq. \eqref{eq:nbte_gfunc-2} in the corner balance equation \eqref{scb}
 and dividing the resulting equation by
$\phi_{i,c}$ we obtain the
discretized NBTE in the $i^{th}$ cell given by
 \begin{multline}
 \frac{S_{i,\alpha}}{c \Delta t^n} \Big(\bar I_{g,m,i,\alpha}^n -  \frac{\phi_{g,i,c}^{n-1}}{\phi_{g,i,c}^n} \bar I_{g,m,i,\alpha}^{n-1} \Big) +\\
	\mathbf{\Omega}_m \cdot \Big( \boldsymbol{\ell}_{i,\alpha+} \frac{\phi_{g,i,\alpha}^n}{\phi_{g,i,c}^n} \bar I_{g,m,i,\alpha+}^n
	+ \boldsymbol{\ell}_{i,\alpha-}  \frac{\phi_{g,i,\alpha-1}^n}{\phi_{g,i,c}^n} \bar I_{g,m,i,\alpha-}^n
	+ \boldsymbol{\ell}_{i,\alpha+1/2} \bar I_{g,m,i,\alpha+1/2}^n
	+ \boldsymbol{\ell}_{i,\alpha-1/2} \bar I_{g,m,i,\alpha-1/2}^n \Big)   \\
	+    \varkappa_{g,i}^n S_{i,\alpha} \bar I_{g,m,i,\alpha}^n
	=   \varkappa_{g,i}^n \bar B_{g,i}^n S_{i,\alpha} \, ,
	\label{scb-norm}
\end{multline}
 \begin{equation*}
 g \in \mathbb{N}(G) \, ,  \ m \in \mathbb{N}(M) \, ,  \ i \in \mathbb{N}(X) \, , \
 \alpha \in \mathbb{N}(C) \, , \ n \in \mathbb{N}(N) \, .
  \end{equation*}
Introducing Eq. \eqref{eq:nbte_gfunc-2} in Eqs. \eqref{scb-a1} and \eqref{scb-a2},
 we get the auxiliary relations defined by
\begin{subequations}\label{scb-norm-aux}
	\begin{equation}
		\bar I_{g,m,i,\alpha\pm1/2}^n = \frac{1}{2}(\bar I_{g,m,i,\alpha}^n  + \bar I_{g,m,i,\alpha \pm 1}^n ) \, ,
		\label{scb-norm-a1}
	\end{equation}
	\begin{multline}
		\bar I_{g,m,i,\alpha+}^n =
		\begin{cases}
			\frac{\phi_{g,i,c}^n }{\phi_{g,i,\alpha }^n } \bar I_{g,m,i,\alpha}^n \, , \ & \mathbf{\Omega}_m  \cdot \boldsymbol{n}_{i,\alpha+} > 0 \, ,
\medskip \\
			\frac{\phi_{g,i^{\prime},c}^n }{\phi_{g,i, \alpha }^n} \bar I_{g,m,i^{\prime},\alpha^{\prime}}^n \, , \ & \mathbf{\Omega}_m   \cdot \boldsymbol{n}_{i,\alpha+} < 0 \, ,
		\end{cases}
\quad \\
		\bar I_{g,m,i,\alpha-}^n =
		\begin{cases}
			\frac{\phi_{g,i,c}^n }{\phi_{g,i, \alpha-1 }^n } \bar I_{g,m,i,\alpha}^n \, , \ & \mathbf{\Omega}_m  \cdot \boldsymbol{n}_{i,\alpha-} > 0 \, ,
\medskip \\
			\frac{\phi_{g,i^{\prime\prime},c}^n }{\phi_{g,i, \alpha-1 }} \bar I_{g,m,i^{\prime\prime},\alpha^{\prime\prime}}^n \, , \ & \mathbf{\Omega}_m   \cdot \boldsymbol{n}_{i,\alpha-} < 0 \, .
		\end{cases}
		\label{scb-norm-a2}
	\end{multline}
\end{subequations}
Plugging the auxiliary equations  \eqref{scb-norm-aux} in Eq.  \eqref{scb-norm} we obtain the
 corner-balance equations in terms of the corner normalized intensities
 \begin{subequations}\label{scb-norm-c}
 \begin{multline}
 \frac{S_{i,\alpha}}{c \Delta t^n} \Big(\bar I_{g,m,i,\alpha}^n -  \frac{\phi_{g,i,c}^{n-1}}{\phi_{g,i,c}^n} \bar I_{g,m,i,\alpha}^{n-1} \Big) +\\
	\mathbf{\Omega}_m \cdot \Big(
\boldsymbol{\ell}_{i,\alpha+}  \bar I_{g,m,i,\alpha}^n
	+ \boldsymbol{\ell}_{i,\alpha-}  \frac{\phi_{g,i^{\prime\prime},c}^n}{\phi_{g,i,c}^n} \bar I_{g,m,i^{\prime\prime},\alpha^{\prime\prime}}^n
	+ \frac{1}{2}\boldsymbol{\ell}_{i,\alpha+1/2}\big(\bar I_{g,m,i,\alpha}^n + \bar I_{g,m,i,\alpha+1}^n\big)
	+ \frac{1}{2}\boldsymbol{\ell}_{i,\alpha-1/2} \big(\bar I_{g,m,i,\alpha}^n + \bar I_{g,m,i,\alpha-1}^n\big) \Big)   \\
	+    \varkappa_{g,i}^n S_{i,\alpha} \bar I_{g,m,i,\alpha}^n
	=   \varkappa_{g,i}^n \bar B_{g,i}^n S_{i,\alpha} \, , \quad \\
\mathbf{\Omega}_m  \cdot \boldsymbol{n}_{i,\alpha+} > 0 \, , \quad
\mathbf{\Omega}_m   \cdot \boldsymbol{n}_{i,\alpha-} < 0 \, ,
	\label{scb-norm-c1}
\end{multline}
 \begin{multline}
 \frac{S_{i,\alpha}}{c \Delta t^n} \Big(\bar I_{g,m,i,\alpha}^n -  \frac{\phi_{g,i,c}^{n-1}}{\phi_{g,i,c}^n} \bar I_{g,m,i,\alpha}^{n-1} \Big) +\\
	\mathbf{\Omega}_m \cdot \Big(
      \boldsymbol{\ell}_{i,\alpha+} \frac{\phi_{g,i^{\prime},c}^n}{\phi_{g,i,c}^n} \bar I_{g,m,i^{\prime},\alpha^{\prime}}^n
	+ \boldsymbol{\ell}_{i,\alpha-}   \bar I_{g,m,i,\alpha}^n
	+ \frac{1}{2}\boldsymbol{\ell}_{i,\alpha+1/2} \big(\bar I_{g,m,i,\alpha}^n + \bar I_{g,m,i,\alpha+1}^n\big)
	+ \frac{1}{2}\boldsymbol{\ell}_{i,\alpha-1/2} \big(\bar I_{g,m,i,\alpha}^n + \bar I_{g,m,i,\alpha-1}^n\big)\Big)   \\
	+    \varkappa_{g,i}^n S_{i,\alpha} \bar I_{g,m,i,\alpha}^n
	=   \varkappa_{g,i}^n \bar B_{g,i}^n S_{i,\alpha} \,  \quad \\
\mathbf{\Omega}_m  \cdot \boldsymbol{n}_{i,\alpha+} < 0 \, ,\quad
\mathbf{\Omega}_m   \cdot \boldsymbol{n}_{i,\alpha-} > 0 \, .
	\label{scb-norm-c2}
\end{multline}
\end{subequations}

The formulated spatial  discretization given in Eqs. \eqref{scb-norm} and \eqref{scb-norm-aux}  of the semi-continuous NBTE
(Eq. \eqref{bte-be}) is algebraically consistent with the SCB scheme for  the BTE.
We note the scheme \eqref{scb-norm} can be interpreted as an approximation of
 the NBTE in the alternative form   given by
\begin{equation} \label{nbte-alt}
	\frac{1}{c}\dv{(\phi_g\bar{I}_g)}{t} +
 \boldsymbol{\Omega}\cdot \boldsymbol{\nabla}(\phi_g\bar{I}_g)+  \varkappa_{g}\phi_g\bar{I}_g =  \varkappa_{g} B_g \, .
\end{equation}

The solution of the discretized NBTE
(Eqs. \eqref{scb-norm} and \eqref{scb-norm-aux}) is used as the basis for a POD-Petrov-Galerkin projection scheme.
During this process, the elements of these discretized equations are summed over all grid points in the involved discrete phase space. For large problems these huge summations (over space, frequency, angle) can pose a numerical challenge when the individual elements span several orders of magnitude, as is the case for our target class of complex multiphysics problems.
Fig. \ref{fig:fc_opac_vs-nu} (see Sec. \ref{sec:results}) displays the range of values that are typical for opacities in the considered TRT problems.
Opacities for low frequency radiation have the largest magnitudes, and opacities for high frequency radiation have the smallest magnitudes. In the types of problems considered here
the physics is primarily driven by radiation in the high frequency range. This means the majority of cancellation errors that occur in full-phase space summations will affect the most important piece of the solution.
To minimize the prominent numerical issues with POD-Petrov-Galerkin projection for these TRT problems the  corner-balance NBTE equations \eqref {scb-norm} are scaled by the opacity $\varkappa_{g,i}^n$ in each cell to obtain:

 \begin{multline}
 \frac{S_{i,\alpha}}{c \Delta t^n\varkappa_{g,i}^n} \Big(\bar I_{g,m,i,\alpha}^n -\frac{\phi_{i,c}^{n-1}}{\phi_{i,c}^n}  \bar I_{g,m,i,\alpha}^{n-1} \Big) +\\
	\frac{1}{\varkappa_{g,i}^n}\mathbf{\Omega}_m \cdot \Big( \boldsymbol{\ell}_{i,\alpha+} \frac{\phi_{g,i,\alpha}^n}{\phi_{g,i,c}^n} \bar I_{g,m,i,\alpha+}^n
	+ \boldsymbol{\ell}_{i,\alpha-}  \frac{\phi_{g,i,\alpha-1}^n}{\phi_{g,i,c}^n} \bar I_{g,m,i,\alpha-}^n
	+ \boldsymbol{\ell}_{i,\alpha+1/2} \bar I_{g,m,i,\alpha+1/2}^n
	+ \boldsymbol{\ell}_{i,\alpha-1/2} \bar I_{g,m,i,\alpha-1/2}^n \Bigr)   \\
	+    S_{i,\alpha} \bar I_{g,m,i,\alpha}^n
	=   \bar B_{g,i}^n S_{i,\alpha} \, .
	\label{scb-norm-scaled}
\end{multline}
This scaling acts to reduce the magnitude of coefficients and right hand side of the discrete NBTE in low-frequency groups, while increasing them for the high-frequency groups.

%
%
\section{POD-Petrov-Galerkin Projection of the Normalized Boltzmann Transport Equation}\label{sec:nbte_proj}

The discretized NBTE  (Eqs. \eqref{scb-norm} and \eqref{scb-norm-aux})  reduced to
the set of equations for the  corner normalized intensities    (Eq. \eqref{scb-norm-c})
 has the following operator form:
\begin{equation} \label{eq:disc-nbte}
\big( \mathcal{T}^n_h\boldsymbol{\bar{I}}_h^n - \mathcal{\bar T}_h^n\boldsymbol{\bar{I}}_h^{n-1} \big) + \mathcal{\bar L}_h\boldsymbol{\bar{I}}_h^n = \mathbf{\bar Q}_h^n \, ,
 \end{equation}
 \begin{equation}
  \mathcal{\bar T}_h^n =  \mathcal{\bar T}_h^n[\boldsymbol{\phi}_h^n,\boldsymbol{\phi}_h^{n-1}] \, , \quad
  \mathcal{\bar L}_h = \mathcal{\bar L}_h[\boldsymbol{\phi}_h^n] \, , \quad \mathbf{\bar Q}_h^n = \mathbf{\bar Q}_h^n(\boldsymbol{\phi}^n) \, .
 \end{equation}
Here $\boldsymbol{\bar{I}}_h^n\in\   \real^{d}$ is the vector of corner normalized intensities over all phase space at the $n^\text{th}$ time step.
$\boldsymbol{\phi}_h^n  \in \real^{p}$  is the vector of multigroup grid functions of the zeroth moment in spatial cells,
where $p=(X+X_f)G$ and $X_f$ is the total number of cell faces in the spatial grid.
 $\mathbf{\bar Q}_h^n\in \real^{d}$ is the vector of corner normalized Planckian source terms.
To generate the  basis for projection of the discrete NBTE
(Eq. \eqref{eq:disc-nbte})
we form the data matrix of snapshots of the normalized radiation intensities for $N$ time steps
ordered chronologically
\begin{equation}
\mathbf{A}\stackrel{\Delta}{=} \big[\boldsymbol{\bar{I}}_h^1\ \boldsymbol{\bar{I}}_h^2\ \dots\ \boldsymbol{\bar{I}}_h^N \big] \, , \quad
\mathbf{A} \in \real^{d \times N} \, .
\end{equation}
The weighted inner product
\begin{equation} \label{w-inner-0}
 \ipp{\boldsymbol{u}}{\boldsymbol{\tilde u}}_W  = \boldsymbol{u}^\top \mathbf{W} \boldsymbol{\tilde u}  \, \quad
 \boldsymbol{u}, \boldsymbol{\tilde u} \in \real^{d} \, ,  \quad \mathbf{W} \in \real^{d \times d}
 \end{equation}
is introduced to perform projection.
 The associated W-norm is given by $\|u\|_W=\sqrt{\ipp{\boldsymbol{u}}{\boldsymbol{u}}_W}$.
The  discrete inner product \eqref{w-inner-0} is  induced by the following one defined for  functions
\begin{equation}
	\ipp{\chi}{\tilde \chi}
 = \int_{0}^{\infty}\int_{4\pi}\int_{\Gamma} \chi(\mathbf{r},\boldsymbol{\Omega},\nu)
 \tilde \chi(\mathbf{r},\boldsymbol{\Omega},\nu)  d^2r d\Omega d\nu
\end{equation}
and derived to  correspond to the space of  discrete  solution and
the discretization scheme \eqref{eq:disc-nbte} of the NBTE \cite{volkwein-2002,volkwein-2013}.
The multigroup approximation with respect to  frequency ($\nu$) leads to
\begin{equation}
\int_{0}^{\infty}\int_{4\pi}\int_{\Gamma} \chi
 \tilde \chi   d^2r d\Omega d\nu = \sum_{g=1}^G  \int_{4\pi}\int_{\Gamma} \chi_g
 \tilde \chi_g   d^2r d\Omega  \, , \quad  \chi_g = \int_{\nu_g}^{\nu_{g+1}} \chi d\nu \, .
\end{equation}
 Then, the discrete-ordinates discretization for directions of particle motion  $\boldsymbol{\Omega}$  gives rise to
\begin{equation}
 \sum_{g=1}^G  \int_{4\pi}\int_{\Gamma} \chi_g \tilde \chi_g   d^2r d\Omega \approx
  \sum_{g=1}^G \sum_{m=1}^M   w_m\int_{\Gamma} \chi_{g,m} \tilde \chi_{g,m}   d^2r  \, ,   \quad
  \chi_{g,m} = \chi_g(\boldsymbol{\Omega}_m) \, .
\end{equation}
The SCB scheme is based on   integration over cell corners and hence we obtain
\begin{multline}
  \sum_{g=1}^G \sum_{m=1}^M   w_m \int_{\Gamma} \chi_{g,m} \tilde \chi_{g,m}   d^2r =
   \sum_{g=1}^G \sum_{m=1}^M   w_m  \sum_{i=1}^X \int_{D_i} \chi_{m,g} \tilde\chi_{m,g}  d^2r \approx \\
   \sum_{g=1}^G \sum_{m=1}^M   w_m  \sum_{i=1}^X  \sum_{\alpha=1}^{C}   \chi_{g,m,i,\alpha} \tilde \chi_{g,m,i,\alpha}S_{i\alpha}  \, .
\end{multline}
As a result, we define the  discrete weighted  inner product
as follows:
\begin{equation} \label{w-inner}
\ipp{\boldsymbol{u}}{\boldsymbol{\tilde u}}_W \stackrel{\Delta}{=} \sum_{g=1}^G\sum_{m=1}^M w_m \sum_{i=1}^{X}\sum_{\alpha=1}^C {u}_{g,m,i,\alpha} {\tilde u}_{g,m,i,\alpha}S_{i,\alpha} \, .
\end{equation}
Thus, the matrix $\mathbf{W}$ is   given by
\begin{equation} \label{eq:Wmat}
	\mathbf{W} \stackrel{\Delta}{=} \bigoplus_{g=1}^{G}\bigoplus_{m=1}^{M}\bigoplus_{i=1}^{X} w_m \mathbf{S},\quad \mathbf{S} =\text{diag}(S_{i,1},\ldots, S_{i,C}).
\end{equation}
Note that in the case of rectangular spatial grids (see Fig. \ref{scb_corner}) we have
\begin{equation}
C=4 \, , \quad S_{i,\alpha} = \frac{1}{4}S_{i,c}
\end{equation}
where $S_{i,c}$ is the area of the $i^{th}$ cell  and
$\mathbf{W}$ is reduced to
\begin{equation} \label{eq:Wmat-rec}
	\mathbf{W} = \frac{1}{4}\bigoplus_{g=1}^{G}\bigoplus_{m=1}^{M}\bigoplus_{i=1}^{X} w_m S_{i,c}\mathbb{I},\quad \mathbb{I}=\text{diag}(1,1,1,1).
\end{equation}

We now construct a weighted data matrix
\begin{equation} \label{hat-A}
\hat{\mathbf{A}} \stackrel{\Delta}{=} \mathbf{W}^{-1/2}\mathbf{A}\mathbf{H}^{-1/2} \, ,
\end{equation}
  where the matrix containing temporal integral weights corresponding to the BE time-integration scheme is $\mathbf{H}\stackrel{\Delta}{=}\text{diag}(\Delta t^1,\ \Delta t^2,\ \dots,\ \Delta t^N)$.
The   thin  singular value decomposition (SVD) of  $\hat{\mathbf{A}}$ yields
\begin{equation}
\hat{\mathbf{A}}=\hat{\mathbf{U}}\hat{\mathbf{S}}\hat{\mathbf{V}}^\top \,  \quad
\hat{\mathbf{U}}\in\real^{d\times r} \, , \quad
\hat{\mathbf{V}}\in\real^{N\times r} \, , \quad
\hat{\mathbf{S}}\in\real^{r\times r} \, ,
\end{equation}
where    $r$ is the rank of $\hat{\mathbf{A}}$,
matrices $\hat{\mathbf{U}}=[\hat{\boldsymbol{u}}_1\ \dots\ \hat{\boldsymbol{u}}_r]$   and  $\hat{\mathbf{V}}=[\hat{\boldsymbol{v}}_1\ \dots\ \hat{\boldsymbol{v}}_r]$ contain
the left and right singular vectors of $\hat{\mathbf{A}}$ as columns, respectively.
$\hat{\mathbf{S}}=\text{diag}(\hat{\sigma_1},\ \hat{\sigma_2},\ \dots,\ \hat{\sigma_r})$ holds the nonzero singular values of $\hat{\mathbf{A}}$ listed in descending value.
 The POD basis   $\{ \boldsymbol{u}_\ell \}_{\ell=1}^r$ is defined by
 \begin{equation} \label{pod-basis}
  \boldsymbol{u}_\ell  =  \mathbf{W}^{1/2}\boldsymbol{\hat u}_\ell  \, .
 \end{equation}
The basis defined via Eq. \eqref{pod-basis} optimally approximates the snapshots
$\{\boldsymbol{\bar{I}}_h^n\}_{n=1}^N$
in the W-norm defined by the weighted inner product in Eq. \eqref{w-inner} \cite{volkwein-2013}.
The normalized intensities are expanded in this basis as
\begin{equation} \label{eq:ibar_exp}
	\boldsymbol{\bar{I}}_h^n = \sum_{\ell=1}^{k} \lambda_\ell^n\boldsymbol{u}_\ell,\quad k\leq r,
\end{equation}

\noindent where $\{\lambda_\ell^n\}_{\ell=1}^k$ is a set of unknown scalar coefficients, which are
 the generalized coordinates for the normalized intensities in the basis $\{\boldsymbol{u}_\ell\}_{\ell=1}^k$.
 	The rank of expansion $k$ is determined by considering the projection of $\hat{\mathbf{A}}$ onto the rank-$k$ weighted POD basis, $\hat{\mathbf{A}}_k=\hat{\mathbf{U}}_k\hat{\mathbf{U}}_k^\top\hat{\mathbf{A}}$, where $\hat{\mathbf{U}}_k=[\hat{\boldsymbol{u}}_1\ \dots\ \hat{\boldsymbol{u}}_k]$.
The errors of this POD approximation in the Frobenius norm and 2-norm are determined by the singular values and given by \cite{ipsen-2009}
\begin{equation}
	\|\hat{\mathbf{A}} - \hat{\mat{U}}_k\hat{\mat{U}}_k^\top\hat{\mathbf{A}}\|_F^2 = \sum_{\ell=k+1}^r\hat{\sigma}_\ell^2,
\end{equation}
and
\begin{gather}
\|\hat{\mathbf{A}} - \hat{\mat{U}}_k\hat{\mat{U}}_k^\top\hat{\mathbf{A}}\|_2^2 = \hat{\sigma}_{k+1}^2.
\end{gather}
The relative error of the POD approximation in the Frobenius norm is therefore
\begin{equation} \label{eq:PODerr_rel}
	\xi^2 = \frac{\|\hat{\mathbf{A}}-\hat{\mathbf{A}}_k\|_F^2}{\|\hat{\mathbf{A}}\|_F^2} = \frac{\sum_{\ell=k+1}^r\hat{\sigma}_\ell^2}{\sum_{\ell=1}^r\hat{\sigma}_\ell^2}.
\end{equation}
The rank of expansion $k$ can thus be determined to enforce the POD approximation error in the relative Frobenius norm to be less than some desired value $\xi\in[0,1]$:
\begin{equation}
	k = \min \bigg\{ j\ :\ \frac{\sum_{\ell=j+1}^r\hat{\sigma}_\ell^2}{\sum_{i=1}^r\hat{\sigma}_\ell^2} \leq \xi^2 \bigg\}   \, . \label{eq:PODerr_k}
\end{equation}

We  substitute the expansion \eqref{eq:ibar_exp} into Eq. \eqref{eq:disc-nbte} to obtain
\begin{equation} \label{eq:nbte_R_exp}
	\mathcal{R}_h^n(\mathbf{U}_k\boldsymbol{\Lambda}^n) = 0
\end{equation}
where
\begin{equation}
 \mathcal{R}_h^n(\mathbf{U}_k\boldsymbol{\Lambda}^n) \stackrel{\Delta}{=}( \mathcal{T}^n_h\mathbf{U}_k\boldsymbol{\Lambda}^n -
 \mathcal{\bar T}_h^n\mathbf{U}_k\boldsymbol{\Lambda}^{n-1} ) +
\mathcal{\bar L}_h\mathbf{U}_k\boldsymbol{\Lambda}^n - \mathbf{\bar Q}_h^n  \, ,
\end{equation}
is the residual of the discretized NBTE,
\noindent $\mathbf{U}_k=[\boldsymbol{u}_1\ \dots\ \boldsymbol{u}_k]$ and $\boldsymbol{\Lambda}^n = (\lambda_1^n\ \dots\ \lambda_k^n)^\top$.
We now project Eq. \eqref{eq:nbte_R_exp}
using the weighted inner product \eqref{w-inner} with
some set of test basis functions $\{\boldsymbol{\psi}_\ell\}_{\ell=1}^k$, creating the $k\times k$ dynamical system whose solution is for
the expansion coefficients  $\{\lambda_\ell^n\}_{\ell=1}^k$
\begin{equation} \label{eq:nbte_R_proj}
	\mathbf{\Psi}^\top\mathbf{W}\mathcal{R}_h^n(\mathbf{U}_k\boldsymbol{\Lambda}^n) = 0,
\end{equation}

\noindent with initial condition
\begin{equation}
	\mathbf{\Psi}^\top\mathbf{W}\mathbf{U}_k\boldsymbol{\Lambda}^0=
\mathbf{\Psi}^\top\mathbf{W}\boldsymbol{\bar{I}}_h^0,
\end{equation}

\noindent where $\mathbf{\Psi}=[\boldsymbol{\psi}_1\ \dots\ \boldsymbol{\psi}_k]$. Just as how the POD basis functions $\{\boldsymbol{u}_\ell\}_{\ell=1}^k$ were found constrained  to yield the  optimal expansion
of $\boldsymbol{\bar{I}}_h$, the test basis functions $\{\boldsymbol{\psi}_\ell\}_{\ell=1}^k$ should yield an optimal projection of the NBTE in the weighted inner product.

Thus, we determine $\mathbf{\Psi}$ such that the solution to Eq. \eqref{eq:nbte_R_proj} satisfies
\begin{equation} \label{eq:pg_min}
		\boldsymbol{\Lambda}^n = \arg\min_{\boldsymbol{\zeta}\in\real^k} \|\mathcal{R}_h^n(\mathbf{U}_k\boldsymbol{\zeta})\|_W^2.
\end{equation}
The  well known least-squares Petrov-Galerkin schemes  \cite{buitanh-willcox-ghattas-2008,carlberg-barone-antil-2017,choi-carlberg-2019} use the test basis functions
\begin{equation} \label{eq:pg_basis}
 \boldsymbol{\psi}_\ell = \frac{d \mathcal{R}_h^n(\lambda_{\ell}^n\boldsymbol{u}_{\ell}) }{d \lambda_{\ell}^n},
\end{equation}

\noindent which can be shown to enforce the condition that
\begin{equation}
 \boldsymbol{\Lambda}^n = \arg\min_{\boldsymbol{\zeta}\in\real^k} \|\mathcal{R}_h^n(\mathbf{U}_k\boldsymbol{\zeta})\|_2^2
\end{equation}
if Eq. \eqref{eq:nbte_R_exp} is projected onto $\mathbf{\Psi}$ in the 2-norm \cite{buitanh-willcox-ghattas-2008,carlberg-barone-antil-2017}. It can be shown that
the same
basis functions are the ones which enforce the condition \eqref{eq:pg_min} on Eq. \eqref{eq:nbte_R_proj}
(see \ref{apdx:test_basis}).
The POD-Petrov-Galerkin projection of the NBTE is thus Eq. \eqref{eq:nbte_R_proj},
where $\mathbf{U}_k$ is
defined by Eq. \eqref{pod-basis}
and $\mathbf{\Psi}$ satisfies Eq. \eqref{eq:pg_basis}.
At every time step, the solution to Eq. \eqref{eq:nbte_R_proj} can be used in the expansion \eqref{eq:ibar_exp} to obtain the normalized intensities over the entire phase space of the problem.

In the   NBTE (Eq. \eqref{eq:disc-nbte}),
 the operators $\mathcal{\bar T}_h^n$ and $\mathcal{\bar L}_h^n$ as well as right-hand side   $\mathbf{\bar Q}_h^n$ depend
 on the  vector  $\boldsymbol{\phi}_h^n \in \mathbb{R}^{p}$ of grid functions of the zeroth moment of  group intensities $\{\boldsymbol{\phi}_{h,g}\}_{g=1}^G$.
 The way the test basis functions $\{\boldsymbol{\psi}_\ell\}_{\ell=1}^k$ are defined leads
  to their dependence on $\boldsymbol{\phi}_h$ as well.
  The ROM for TRT is based on the   POD-Petrov-Galerkin projected NBTE and multilevel system of low-order equations
  \eqref{gloqd-o-form} -  \eqref{grey-meb-o-form}.
   In this ROM the projected NBTE plays the role of the high-order NBTE  in the multilevel system
   \eqref{nbte-o-form} -  \eqref{grey-meb-o-form}.
 The zeroth moment of the group intensity  $\phi_{g}(\boldsymbol{r},t)$ applied
 to normalize the intensity (Eq. \eqref{n-intensity}) and formulate the NBTE (Eq. \eqref{nbte-o-form}) (see also Eq. \eqref{eq:nbte})  is given by $\phi_{g}(\boldsymbol{r},t)=cE_{g}(\boldsymbol{r},t)$ (see Eq. \eqref{phi}), where $E_{g}$ is the solution of the multigroup low-order equations \eqref{gloqd-o-form}.

The multigroup LOQD equations can be approximated in space consistently
with the underlying spatial discretization scheme for the  high-order NBTE by
integration of the discrete  over  discrete angular directions and hence performing corresponding projection in the discrete space.
This leads to algebraically  consistent discretization of the multigroup LOQD equations and BTE \cite{larsen-1984,adams-larsen-2002}.
 If the ROM applies LOQD equations  discretized
  consistently with the   BTE and hence with the NBTE, then
  $\boldsymbol{\phi}_h$  can  be defined directly by the discrete solution of the multigroup moment equations $\boldsymbol{\phi}_{h,g}^{lo}$.
  In this case, $\boldsymbol{\phi}_{h,g}^{lo} =  \boldsymbol{\phi}_{h,g}^{ho} $  on any spatial grid, where
 $\boldsymbol{\phi}_{h,g}^{ho}= \mathcal{K}_h  \boldsymbol{I}_h $ and $\mathcal{K}_h  \cdot \stackrel{\Delta}{=}\sum_{m=1}^M w_m \cdot$.

Another method for approximating the multigroup LOQD equations in space is to apply  independent discretization schemes
\cite{dya-vyag-1993,adams-larsen-2002}.
In this case the zeroth moments from the low-order and high-order equations differ
 by a truncation error and hence
\begin{equation}
\boldsymbol{\phi}_{h,g}^{lo}  -  \boldsymbol{\phi}_{h,g}^{ho}   =  \mathcal{O}(h^\mathfrak{p})  \, ,
\end{equation}
where $\mathfrak{p}$  is determined by the order of accuracy of schemes of LOQD equations and BTE.
The difference tends to zero as $h \to 0$. Due to the difference on a finite spatial grid,
${\phi}_{h,g}^{lo}$ cannot be directly applied for  normalization of the BTE instead of ${\phi}_{h,g}^{ho}$
to obtain a POD expansion of the Eddington tensor based on the expansion of the normalized intensity.
One way to overcome this effect is to formulate a  ROM which  effectively takes into account for this difference.
This approach needs  data on the ratio of cell-wise grid functions of zeroth moments from the low-order and high-order equations.
The developed ROM is based on an alternative way  and uses  directly $\phi_{h,g}^{ho}$. This requires storage of $\boldsymbol{\phi}_h^{n,ho}=
\{\boldsymbol{\phi}_{h,g}^{n,ho}\}_{g=1}^G$  for $\{t^n\}_{n=1}^{N}$.
We form a data matrix of
\begin{equation}
	\mat{\Phi} \stackrel{\Delta}{=}\big [\boldsymbol{\phi}_h^1\ \boldsymbol{\phi}_h^2\ \dots \boldsymbol{\phi}_h^{N} \big] \in \real^{p\times N} \, .
\end{equation}
This data is collected from the same FOM solution used to generate the matrix of normalized intensity snapshots $\mathbf{A}$.
Since the grid functions of $\phi_{h,g}^{ho}$ are of the same dimensionality as grid functions of the LOQD solution,
the additional memory requirements are small in comparison to the storage of basis functions for $\bar{I}_g$.

%
%
\section{Discretization of Moment Equations \label{sec:loqd_disc}}

The system of  low-order equations for moments of the intensity and the MEB equation
(Eqs. \eqref{gp-loqd}, \eqref{gqd}, and \eqref{grey_meb}) are
 approximated by the BE time integration method.  In this study we consider TRT problems
in 2D Cartesian geometry on rectangular grids. The multilgroup LOQD equations are discretized in space with the 2$^\text{nd}$-order finite volume  (FV)  method \cite{aristova-vya-avk-m&c1999,pg-dya-jcp-2020}.
The FV method is formulated for the cell-average and face-average  group radiation energy density
and the  face-average normal components of the group radiation flux.
Fig. \ref{fig: fv-uknowns} shows the grid functions of unknowns of the FV scheme in the $i^\text{th}$ spatial cell.
The  multigroup balance equation \eqref{gp_ebal} is integrated over the spatial cell $\mathcal{D}_i$.
The multigroup first moment equation \eqref{gp_mbal}  is
 integrated over   bottom (B), left (L), right (R), and  top (T)  halves of $\mathcal{D}_i$.
The formulation of this FV scheme specifies the grid functions of
 the Eddington tensor as  scheme coefficients. They are the cell-average values of diagonal elements of tensor
 $\mathfrak{f}_{\alpha\alpha,g}$   ($\alpha =x,y)$,  face-average values of $\mathfrak{f}_{xy,g}$ on all faces,
 face-average values of $\mathfrak{f}_{xx,g}$ on the left and right faces, and  face-average values of  $\mathfrak{f}_{yy,g}$ on the top and bottom faces.
 Fig. \ref{fig: fv-ET} illustrates  the grid functions of  the Eddington tensor on the FV scheme in $\mathcal{D}_i$.
  The temperature is defined by its cell-average value. The spatial discetization scheme for the
  grey LOQD equations is  algebraically  consistent  with the FV scheme for the multigroup LOQD equations \cite{pg-dya-jcp-2020}.
  It is derived by summing the discrete multigroup LOQD equations over groups. This scheme
  is formulated  for the same kind of grid functions of cell-wise unknowns as the FV scheme for the multigroup LOQD equations (see Fig. \ref{fig: fv-scheme}) .

\begin{figure}[ht!]
	\vspace{-.5cm}
	\centering
	\subfloat[unknowns in $\mathcal{D}_i$ \label{fig: fv-uknowns}]
{\begin{picture}(160,120)
\drawline(20,20)(140,20)(140,100)(20,100)(20,20)
\put(80,60){\circle*{4}}
      \put(70,67.5){{\footnotesize $
      E_{g,i,c}$}}
\put(140,60){\circle*{4}}
      \put(114,67.5){{\footnotesize $E_{g,i,R}$}}
      \put(107,52.5){{\footnotesize $F_{x,g,i,R}$}}
\put(20,60){\circle*{4}}
       \put(23,67.5){{\footnotesize $E_{g,i,L}$}}
     \put(23,52.5){{\footnotesize $F_{x,g,i,L}$}}
\put(80,20){\circle*{4}}
      \put(55,25){{\footnotesize $E_{g,i,B}$}}
       \put(83,25){{\footnotesize $F_{y,g,i,B}$}}
\put(80,100){\circle*{4}}
      \put(55,90){{\footnotesize 
      $E_{g,i,T}
      $}}
       \put(83,90){{\footnotesize $F_{y,g,i,T}$}}
\end{picture}}
\hspace{2cm}
	\subfloat[grid functions of    $\mathfrak{f}_{g}$  \label{fig: fv-ET}]
{\begin{picture}(160,120)
\drawline(20,20)(140,20)(140,100)(20,100)(20,20)
\put(80,60){\circle*{4}}
      \put(66,67.5){{\footnotesize $\mathfrak{f}_{xx,g,i,c}$}}
      \put(66,52.5){{\footnotesize $\mathfrak{f}_{yy,g,i,c}$}}
\put(140,60){\circle*{4}}
      \put(105,67.5){{\footnotesize $\mathfrak{f}_{xx,g,i,R}$}}
      \put(105,52.5){{\footnotesize $\mathfrak{f}_{xy,g,i,R}$}}
\put(20,60){\circle*{4}}
       \put(23,67.5){{\footnotesize $\mathfrak{f}_{xx,g,i,L}$}}
     \put(23,52.5){{\footnotesize $\mathfrak{f}_{xy,g,i,L}$}}
\put(80,20){\circle*{4}}
      \put(45,25){{\footnotesize $\mathfrak{f}_{yy,g,i,B}$}}
       \put(85,25){{\footnotesize $\mathfrak{f}_{xy,g,i,B}$}}
\put(80,100){\circle*{4}}
      \put(45,90){{\footnotesize $\mathfrak{f}_{yy,g,i,T}$}}
       \put(85,90){{\footnotesize $\mathfrak{f}_{xy,g,i,T}$}}
\end{picture}}
\vspace{0.5cm}
	\caption{Unknowns of the FV scheme and grid functions of the Eddington tensor in $\mathcal{D}_i$	\label{fig: fv-scheme} }
\end{figure}
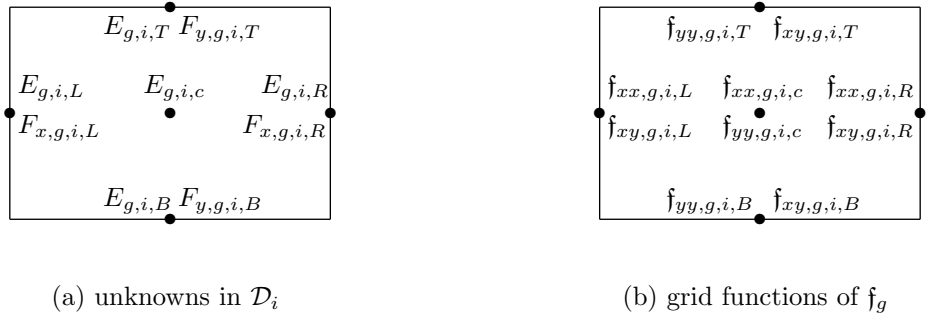

%
%
\section{Formulation of the Reduced Order Model} \label{sec:rom}

The ROM for TRT for the given low-rank $k$ is defined by the hierarchy of equations defined as follows
\begin{itemize}
\item The POD-Petrov-Galerkin projected NBTE  for
 the expansion coefficients  $\{\lambda_\ell^n\}_{\ell=1}^k$
\begin{subequations} \label{podpg-nbte}
\begin{equation}
	\mathbf{\Psi}^\top\mathbf{W}\mathcal{R}_h^n(\mathbf{U}_k\boldsymbol{\Lambda}^n) = 0,
\end{equation}
\begin{equation} \mathbf{\Psi}^\top\mathbf{W}\mathbf{U}_k\boldsymbol{\Lambda}^0=\mathbf{\Psi}^\top\mathbf{W}\boldsymbol{\bar{I}}_h^0,
\end{equation}
\end{subequations}
\item the expansion of  the grid functions of the Eddington tensor
$\boldsymbol{\mathfrak{f}}_h^n = \{\boldsymbol{\mathfrak{f}}_{h,g}^n \}_{g=1}^G$
 and the boundary factors
$\boldsymbol{\beta}_h^n = \{\boldsymbol{\beta}_{h,g}^n \}_{g=1}^G$
 with the angular moments of the POD basis functions
\begin{equation} \label{et-pod}
\boldsymbol{\mathfrak{f}}_h^n = \sum_{\ell=1}^k \lambda_k^n{\boldsymbol{\mathfrak{h}}}_{\ell} \, ,
\end{equation}
\begin{equation}  \label{bf-pod}
\boldsymbol{\beta}_h^n = \sum_{\ell=1}^k \lambda_k^n{\boldsymbol{b}}_{\ell} \, ,
\end{equation}
where
\begin{equation}
 \boldsymbol{\mathfrak{h}}_\ell=\mathcal{H}_h\boldsymbol{u}_{\ell} \, ,
\quad
 \boldsymbol{b}_\ell=\mathcal{B}_h\boldsymbol{u}_{\ell} \, ,
\end{equation}
and
\begin{equation}
\mathcal{H}_h \cdot \stackrel{\Delta}{=}  \sum_{m=1}^M w_m \boldsymbol{\Omega}_m\otimes\boldsymbol{\Omega}_m \cdot \, , \quad
\mathcal{B}_h \cdot \stackrel{\Delta}{=}
  \sum_{m=1}^M  w_m  |\boldsymbol{n}_\Gamma\cdot\boldsymbol{\Omega}_m|\ \cdot \, ,
\end{equation}
are  discrete operators of corresponing angular moments in discrete space (see Eq. \eqref{H-oper} and  and \eqref{beta-oper}),

\item the  discretized multigroup LOQD equations for grid functions of moments $\boldsymbol{\varphi}_{h,g}$
\begin{equation} \label{mloqd-o-h}
 \frac{1}{\Delta t^n}\big(  \boldsymbol{\varphi}_{h,g}^n -  \boldsymbol{\varphi}_{h,g}^{n-1} \big) +
  \mathcal{M}_{h,g}^n \boldsymbol{\varphi}_{h,g}^n = \mathbf{q}_{h,g}^n \, , \
 \ g \in \mathbb{N}(G) \, ,
\end{equation}
\item  the discretized  effective grey  LOQD equations  for grids functions of   moments $\boldsymbol{\hat \varphi}_{h}$
\begin{equation} \label{gloqd-o-h}
 \frac{1}{\Delta t^n}\big(  \boldsymbol{\hat \varphi}_h^n -  \boldsymbol{\hat \varphi}_h^{n-1} \big)  +
  \mathcal{\hat M}_h^n \boldsymbol{\hat \varphi}_h^n = \mathbf{\hat q}_h^n \, ,
  \end{equation}
\item the   discretized effective grey MEB  equation    for $\boldsymbol{\hat \varphi}_{h}$ and grids functions of   $T$
\begin{equation}\label{eb-grey}
\frac{1}{\Delta t^n} \big(  \varepsilon_h(T^n) - \varepsilon_h(T^{n-1}) \big)=    \mathcal{P}_h^n
  \boldsymbol{\hat \varphi}_{h}^n +   \tilde q_h^n  \, .
\end{equation}
\end{itemize}

The online stage of the ROM for TRT problems is outlined in Algorithm \ref{alg:rom_alg}.
The system of equations of the ROM is solved iteratively.
On every time step, the algorithm is defined by two main iteration cycles:
  outer and inner iterations.
The outer iteration cycle updates
the expansion coefficients $\{\lambda^n\}_{\ell=1}^k$  with the estimate of temperature $T^{n{(s)}}$.
The operator of the NBTE depends on temperature as well as  the test basis functions $\{\boldsymbol{\psi}_\ell\}_{\ell=1}^k$.
These elements are updated to define the problem for the expansion coefficients.
Computing the solution to the projected NBTE  (Eq. \eqref{podpg-nbte}) requires inverting
a $k\times k$ dense linear system which can be solved at low-cost for small $k$.
The grid functions of the
Eddington tensor (Eq. \eqref{et-pod})
 and boundary factors (Eq. \eqref{bf-pod})  are  linear combinations of the POD basis functions
 of the normalized intensity with updated expansion coefficients.
 These data provide updated closures for the multigroup LOQD equations.
  A maximum number of
  outer iterations in the ROM algorithm is defined as $s_\text{max}$ which is used to further reduce the ROM's computational cost.
The inner iterations involve solution of the multilevel moment equations with
the $s^\text{th}$ iterative estimate of Eddington tensor and boundary factors.
The solution of the multigroup LOQD equations generates the multigroup
 functions to compute effective grey opacities and Eddington tensor
 which are coefficients of the effective grey LOQD equations.
 The  estimate of temperature is obtained by solving the
 coupled system of MEB and grey LOQD equations.
 Note that at the beginning of each time step, $\boldsymbol{\mathfrak{f}}_g$ and $\beta_g$ are initialized with the previous time step solution (or initial condition).
 The system of low-order equations is solved with    these initialized quantities to obtain the first estimate
temperature on the new time step.

\begin{algorithm}[h]
{\small
\DontPrintSemicolon
Input: $k$, $\mathbf{U}_k$,
$\{\boldsymbol{\mathfrak{h}}_{\ell} \}_{\ell=1}^k$,
$\{\boldsymbol{b}_{\ell} \}_{\ell=1}^k$,
$\mat{\Phi}$, $s_{max}$\;
$n=0$\;
\While{$t^n \leq t^\text{end}$}{
$s=-1$, $T^{n(0)}=T^{n-1}$,
$\boldsymbol{\mathfrak{f}}_h^{n(0)}=\boldsymbol{\mathfrak{f}}_h^{n-1}$, $\boldsymbol{\beta}_{h}^{n(0)}=\boldsymbol{\beta}_{h}^{n-1}$\;
\While{$s<s_{max}  \! \land  \! \Big(  \!|| T^{n(s)} - T^{n(s-1)} ||>\epsilon_s||T^{n(s)}|| \! \lor  \!
||E^{n(s)} - E^{n(s-1)}||> \epsilon_s||E^{n(s)}||  \! \Big)$ }{
$\bullet$ outer iterations\;
 $s=s+1$ \;
 \If{$s>0$}{
$T^{n(s)} \, \leadsto \, \mathcal{R}_{h}^{n(s)}$, $\mathbf{\Psi}^{n(s)}$ \;
 Solve:  $\big(\mathbf{\Psi}^{n(s)}\big)^\top\mathbf{W}\mathcal{R}_h^{n\, (s)} (\mathbf{U}_k\boldsymbol{\Lambda}^{n\, (s)}) = 0$\;
 $ \boldsymbol{\Lambda}^{n (s)}\  \leadsto  \
 \boldsymbol{\mathfrak{f}}_h^{n \,(s)},
   \boldsymbol{\beta}_{h}^{n\,(s)}$   \;
   }
$p=-1$,  $T^{n(0,s)}=T^{n(s)} $ \;
\While{$|| T^{n(p,s)} - T^{n(p-1,s)} ||> \epsilon_{p}||T^{n(p,s)}||  \, \lor \,
 || E^{n(p,s)} - E^{n(p-1,s)} ||>\ \epsilon_{p}||E^{n(p,s)}||$ }{
 $\bullet$ inner iterations\;
 $p=p+1$ \;
$ T^{n(p,s)},   \boldsymbol{\mathfrak{f}}_{h,g}^{n \,(s)},
   \boldsymbol{\beta}_{h,g}^{n\,(s)}  \ \leadsto  \ \mathcal{M}_{h,g}^{n (p,s)} \, , \mathbf{q}_{h,g}^{n   (p,s)}$ \;
   Solve:  $ (\Delta t^n)^{-1}\big(  \boldsymbol{\varphi}_{h,g}^{n   (p,s)} -  \boldsymbol{\varphi}_{h,g}^{n-1} \big) +
  \mathcal{M}_{h,g}^{n   (p,s)} \boldsymbol{\varphi}_{h,g}^{n   (p,s)} = \mathbf{q}_{h,g}^{n   (p,s)}$\;
 $\boldsymbol{\varphi}_{h,g}^{n  (p,s)},
 \boldsymbol{\mathfrak{f}}_{h,g}^{n \,(s)},
   \boldsymbol{\beta}_{h,g}^{n\,(s)} \leadsto \mathcal{\hat M}_h^{n (p,s)}$\;
 Solve: \;
\begin{fleqn}
\begin{align*}
\begin{cases}
    (\Delta t^n)^{-1}   \big(  \boldsymbol{\hat \varphi}_{h}^{n  (p+1,s)} -  \boldsymbol{\hat \varphi}_{h}^{n-1} \big)+
 \mathcal{\hat M}_h^{n (p,s)}[T^{n  (p+1,s)}] \boldsymbol{\hat \varphi}_{h}^{n   (p+1,s)}=
\mathbf{\hat q}_h[T^{n  (p+1,s)}]   \\
 (\Delta t^n)^{-1}     \big(  \varepsilon_h(T^{n   (p+1,s)}) - \varepsilon_h(T^{n-1}) \big)=
  \mathcal{P}_h[T^{n  (p+1,s)}] \boldsymbol{\hat \varphi}_{h}^{n  (p+1,s)}+   \tilde q_h[T^{n  (p+1,s)}]
  \end{cases}
  \end{align*}
\end{fleqn}
}
$T^{n \, (s+1)} \leftarrow  T^{n \, (p+1,s)}$ \;
}
$T^{n} \leftarrow  T^{n \, (s+1)}$\;
$\boldsymbol{\mathfrak{f}}_{h}^n \leftarrow \boldsymbol{\mathfrak{f}}_{h}^{n \, (s)}$,
 $ \boldsymbol{\beta}_{h}^{n}\leftarrow  \boldsymbol{\beta}_{h}^{n\,(s)}$\;
}
}
\caption{ROM algorithm \label{alg:rom_alg}}
\end{algorithm}

At the offline stage, the FOM is solved to generate the data for the snapshot matrix  $\mat{A}$ and $\mat{\Phi}$.
 Algorithm \ref{alg:fom_alg} presents the  method for solving  FOM based on the BTE.
The discretization scheme of the NBTE in the ROM is algebraically consistent  with the SCB scheme for the BTE.
The solution of the FOM provides snapshots of intensities  $\{\boldsymbol{I}_h^n\}_{n=1}^N$.
Then this numerical solution is normalized according to  Eq. \eqref{eq:nbte_gfunc}.
The offline stage is summarized in Algorithm \ref{alg:offline_alg}.
Note that since the basis functions are known,   $\boldsymbol{\mathfrak{h}}_{\ell}$ and    $\boldsymbol{b}_{\ell}$ as  the integrals of $\boldsymbol{u}_{\ell}$   can be handled in the offline stage.

\begin{algorithm}[h]
{\small
\DontPrintSemicolon
$n=0$\;
\While{$t^n \leq t^\text{end}$}{
$s=-1$, $T^{n(0)}=T^{n-1}$,
$\boldsymbol{\mathfrak{f}}_h^{n(0)}=\boldsymbol{\mathfrak{f}}_h^{n-1}$, $\boldsymbol{\beta}_{h}^{n(0)}=\boldsymbol{\beta}_{h}^{n-1}$\;
\While{$s<s_{max}  \! \land  \! \Big(  \!|| T^{n(s)} - T^{n(s-1)} ||>\epsilon_s||T^{n(s)}|| \! \lor  \!
||E^{n(s)} - E^{n(s-1)}||> \epsilon_s||E^{n(s)}||  \! \Big)$ }{
$\bullet$ outer iterations\;
 $s=s+1$ \;
 \If{$s>0$}{
$T^{n(s)} \, \leadsto  \, \mathcal{L}_{h}^{n(s)}, \mathbf{Q}_h^{n(s)}$\;
  Solve:  $  \mathcal{T}_h^n \big(\boldsymbol{I}_h^{n(s)} -   \boldsymbol{I}_h^{n-1} \big)
  + \mathcal{L}_h^{n(s)} \boldsymbol{I}_h^{n(s)}  = \mathbf{Q}_h^{n(s)}$\;
 $ \boldsymbol{I}_h^{n(s)}\  \leadsto \
\boldsymbol{\mathfrak{f}}_h^{n \,(s)},
   \boldsymbol{\beta}_{h}^{n\,(s)}$   \;
 }
$p=-1$,  $T^{n(0,s)}=T^{n(s)} $ \;
\While{$|| T^{n(p,s)} - T^{n(p-1,s)} ||> \epsilon_{p}||T^{n(p,s)}||  \, \lor \,
 || E^{n(p,s)} - E^{n(p-1,s)} ||> \epsilon_{p}||E^{n(p,s)}||$ }{
$\vdots$
}
$T^{n \, (s+1)} \leftarrow  T^{n \, (p+1,s)}$ \;
}
$T^{n} \leftarrow  T^{n \, (s+1)}$\;
$\boldsymbol{\mathfrak{f}}_{h}^n \leftarrow \boldsymbol{\mathfrak{f}}_{h}^{n \, (s)}$,
 $ \boldsymbol{\beta}_{h}^{n}\leftarrow  \boldsymbol{\beta}_{h}^{n\,(s)}$\;
}
}
\caption{FOM algorithm \label{alg:fom_alg}}
\end{algorithm}

\begin{algorithm}[h]
	\begin{enumerate}
		\item solve FOM to compute $\{\boldsymbol{I}_h^n\}_{n=1}^N$ and $\{\boldsymbol{\phi}_h^n\}_{n=1}^N$
		\item normalize the FOM solution to obtain  $\{\boldsymbol{\bar{I}}_h^n\}_{n=1}^N$
		\item form $\mat{A}$ and  $\mat{\Phi}$
		\item form the weighted data  matrix $\hat{\mathbf{A}} = \mathbf{W}^{-1/2}\mathbf{A}\mathbf{H}^{-1/2}$
		\item compute the thin SVD   $\hat{\mathbf{A}}=\hat{\mathbf{U}}\hat{\mathbf{S}}\hat{\mathbf{V}}^\top$
		\item determine the basis dimension $	k = \min \bigg\{ j\ :\ \frac{\sum_{\ell=j+1}^r\hat{\sigma}_\ell^2}{\sum_{i=1}^r\hat{\sigma}_\ell^2} \leq \xi^2 \bigg\}$
		\item truncate weighted POD basis:  $\hat{\mathbf{U}}_k=[\hat{\boldsymbol{u}}_1\ \dots\ \hat{\boldsymbol{u}}_k]$
		\item compute the low-rank POD basis:  $  \mathbf{U}_k  = \mathbf{W}^{1/2} \hat{\mathbf{U}}_k$
		\item compute $\{\boldsymbol{\mathfrak{h}}_{\ell} \}_{\ell=1}^k$,
			$\{\boldsymbol{b}_{\ell} \}_{\ell=1}^k$
	\end{enumerate}
	\caption{The offline stage of ROM \label{alg:offline_alg}}
\end{algorithm}

\clearpage
%
%
\section{Numerical Results} \label{sec:numerical_results} \label{sec:results}

%
\subsection{Test Problem}
To analyze the properties of the proposed ROM, we use a 2-dimensional extension of the well-known Fleck-Cummings (F-C) test problem \cite{fleck-1971}. This F-C test takes the form of a square homogeneous domain in the $x-y$ plane, 6 cm in length on both sides. The domain is initially at a temperature of $T^0$, the left boundary of the domain is subject to incoming radiation with blackbody spectrum at a temperature of $T^\text{in}$, and there is no  incoming radiation at  other boundaries.
The selected problem parameters to be used here are $T^0 = 1\ \text{eV}$ and $T^\text{in}=1\ \text{KeV}$. The material is characterized by an opacity of
\begin{equation} \label{eq:fc-opac}
	\varkappa(\nu, T) = \frac{27}{\nu^3}\bigg(1-e^{-\frac{\nu}{T}}\bigg),
\end{equation}

\noindent with $\nu$ and $T$ measured in KeV, and a material energy density that is linear in temperature
\begin{equation}
	\varepsilon(T) = c_vT,
\end{equation}

\noindent
with material heat capacity $c_v = 0.5917 a_R (T^\text{in})^3$.
The time interval of the problem is  $t\in[0,3\ \text{ns}]$.

\begin{table}[ht!]
	\centering
	\caption{Upper boundaries for each frequency group}
	\begin{tabular}{|l|l|l|l|l|l|l|l|l|l|}
		\hline
		$g$ &1&2&3&4&5&6&7&8&9 \\ \hline
		$\nu_{g}$ [KeV]
		& 0.7075
		& 1.415
		& 2.123
		& 2.830
		& 3.538
		& 4.245
		& 5.129
		& 6.014
		& 6.898 \\ \hline\hline
		$g$&10&11&12&13&14&15&16&17& \\ \hline
		$\nu_{g}$ [KeV]
		& 7.783
		& 8.667
		& 9.551
		& 10.44
		& 11.32
		& 12.20
		& 13.09
		& 1$\times 10^{7}$ & \\ \hline
	\end{tabular}
	\label{tab:freq_grps}
\end{table}

\begin{figure}[ht!]
	\centering
	\begin{tabular}{c|c|c|c}
		& t=1ns & t=2ns & t=3ns \\ \hline &&& \\[-.3cm]
		$T$ &
		\raisebox{-.5\height}{\includegraphics[trim=1cm 0cm 4cm .5cm,clip,height=3.5cm]{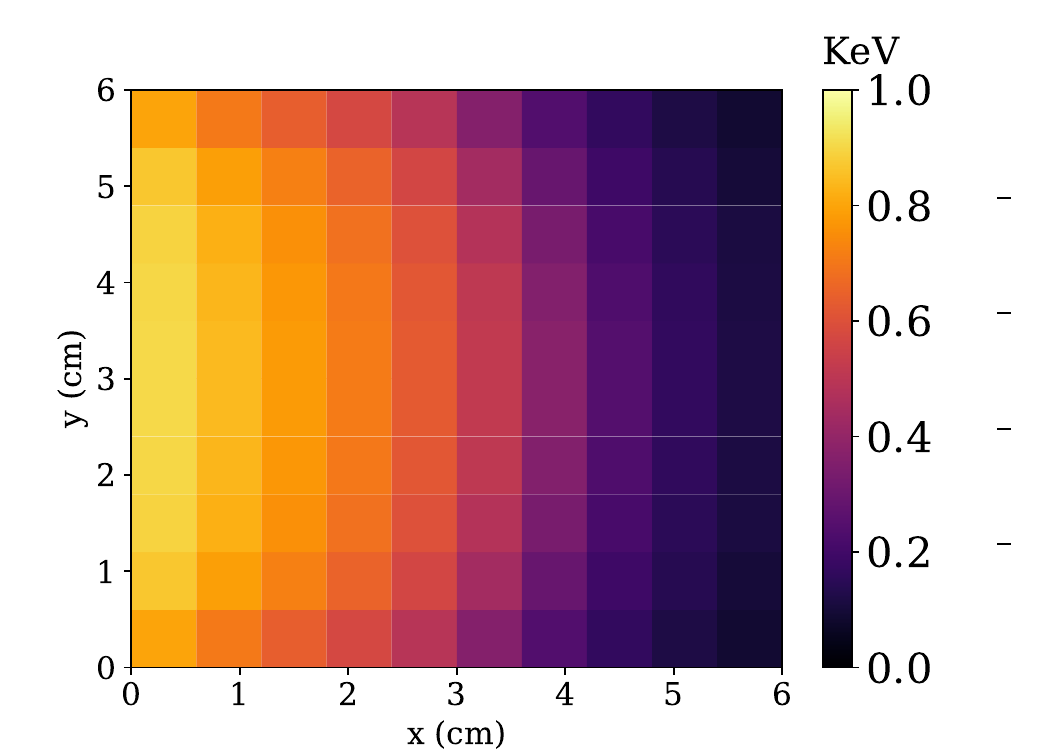}}&
		\raisebox{-.5\height}{\includegraphics[trim=1cm 0cm 4cm .5cm,clip,height=3.5cm]{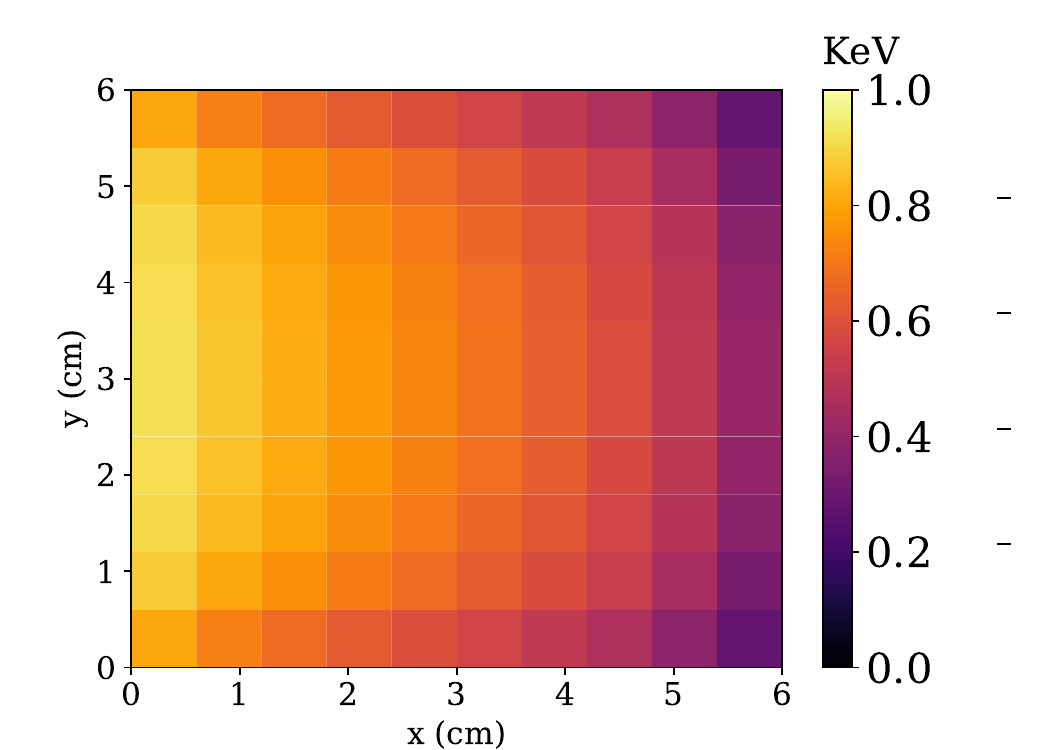}}&
		\raisebox{-.5\height}{\includegraphics[trim=1cm 0cm 0cm .5cm,clip,height=3.5cm]{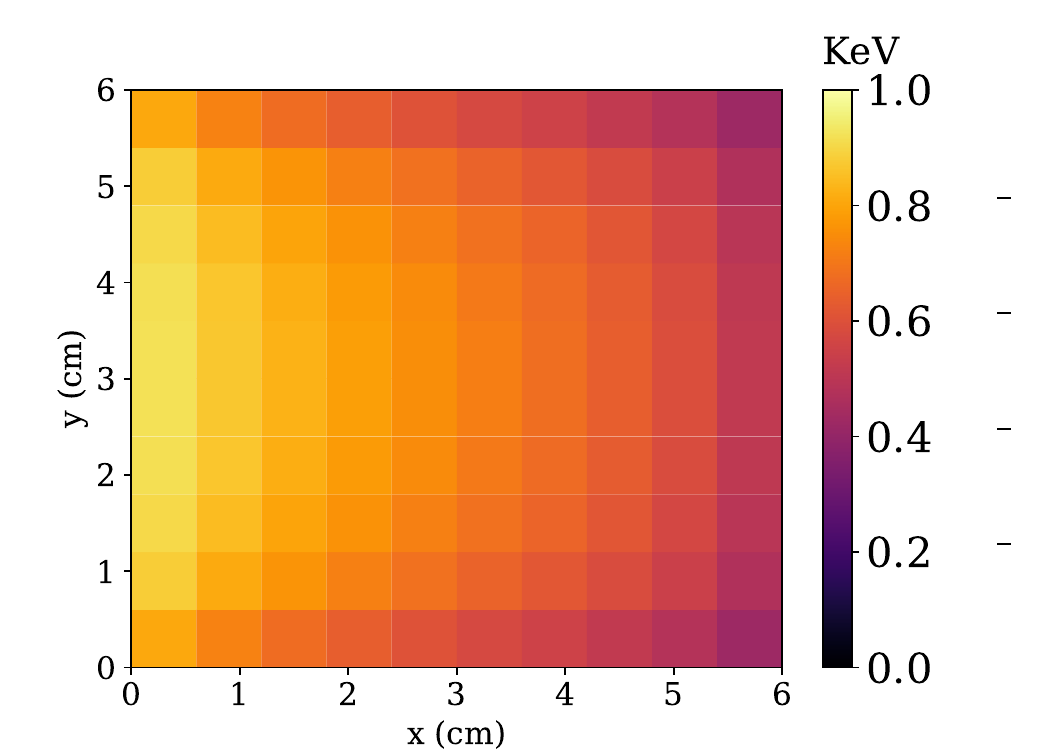}}\\[.1cm] \hline &&& \\[-.3cm]
		$E$ &
		\raisebox{-.5\height}{\includegraphics[trim=1cm 0cm 4cm 0cm,clip,height=3.65cm]{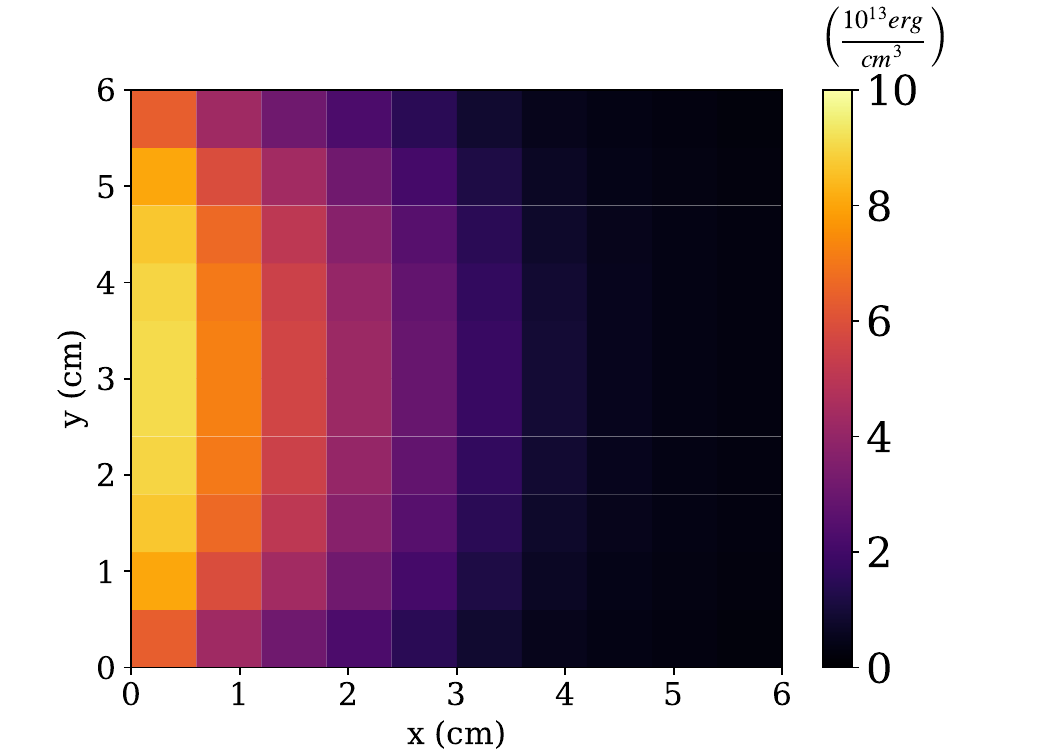}}&
		\raisebox{-.5\height}{\includegraphics[trim=1cm 0cm 4cm 0cm,clip,height=3.65cm]{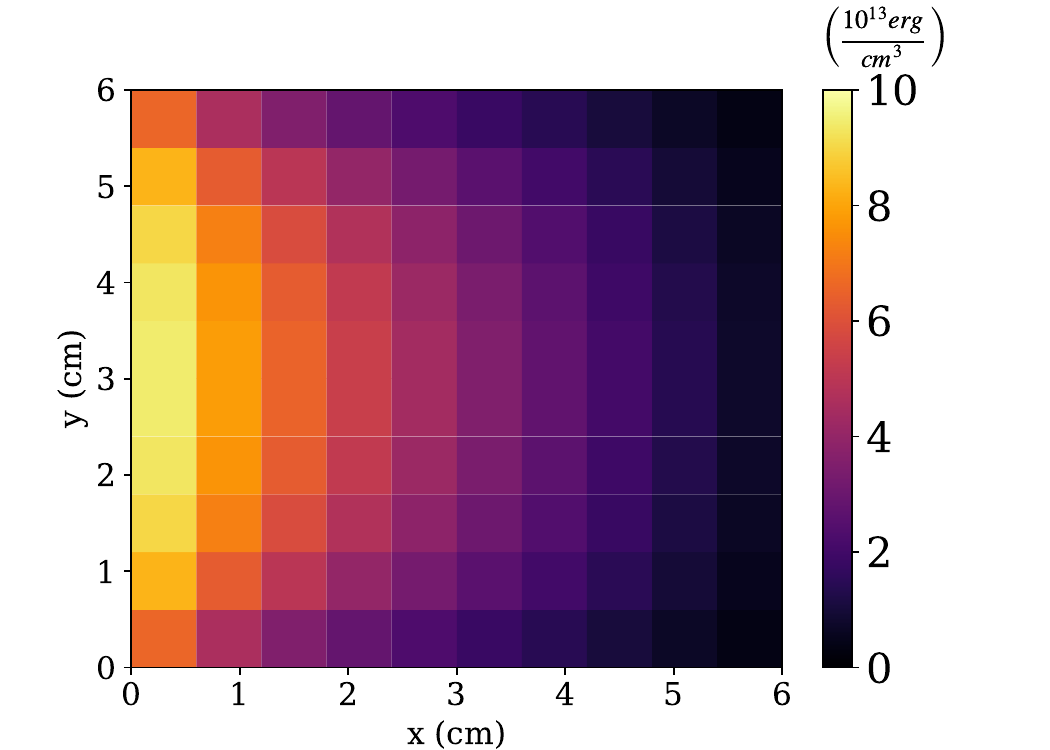}}&
		\raisebox{-.5\height}{\includegraphics[trim=1cm 0cm 0cm 0cm,clip,height=3.65cm]{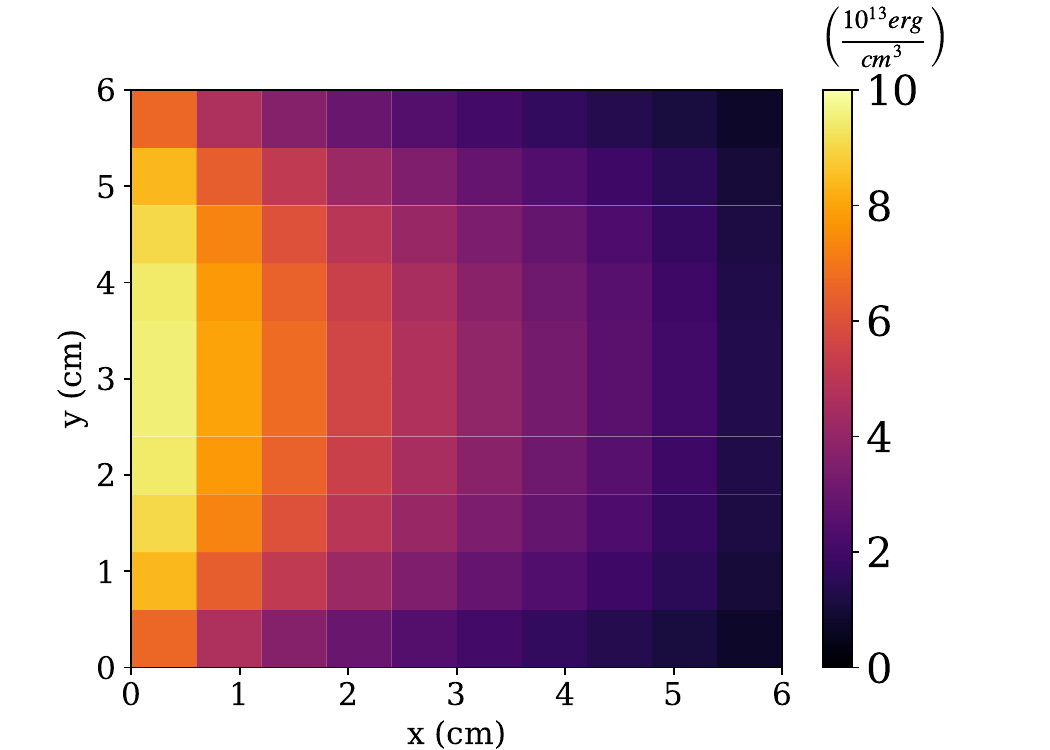}}
	\end{tabular}
	\vspace*{.25cm}
	\caption{F-C test solution for the material temperature $(T)$ and total radiation energy density $(E)$ over the spatial domain at times t=1,\,2,\,3\,ns.}
	
	\label{fig:fom_solution}
\end{figure}

A uniform grid of  $10\times 10$ cells (i.e. $X=100$) with side lengths of $\Delta x = \Delta y = 0.6\ \text{cm}$  is used to discretize the slab. $G=17$ frequency groups are defined as shown in Table \ref{tab:freq_grps}.
The Abu-Shumays angular quadrature set q461214 with
36 discrete directions per quadrant  is used  \cite{abu-shumays-2001}. The total number of angular directions is $M=144$.
The F-C problem is solved for the
specified time interval
with $N=150$ uniform time steps $\Delta t = 2\times 10^{-2} \ \text{ns}$.
When generating ROM solutions to the F-C problem, the following convergence criteria are used
(ref. Algorithm \ref{alg:rom_alg}):
$\epsilon_s=\epsilon_{p}=10^{-14}$.

The solution to this F-C test for the material temperature and total radiation energy density at times
$t=1,2,3$ ns is depicted in Fig. \ref{fig:fom_solution}. The solutions of both $T$ and $E$ take the form of a wave that first rapidly forms on the left boundary before propagating to the right. After this the domain is continuously heated.
Eventually the solution reaches a regime close to steady state. Fig. \ref{fig:fc_opac_vs-nu} plots the F-C opacity (Eq. \eqref{eq:fc-opac}) against photon energy for several material temperatures. The considered ranges for $\nu$ and $T$ are typical for the F-C test.

\begin{figure}[ht!]
	\centering
	\includegraphics[width=0.5\textwidth]{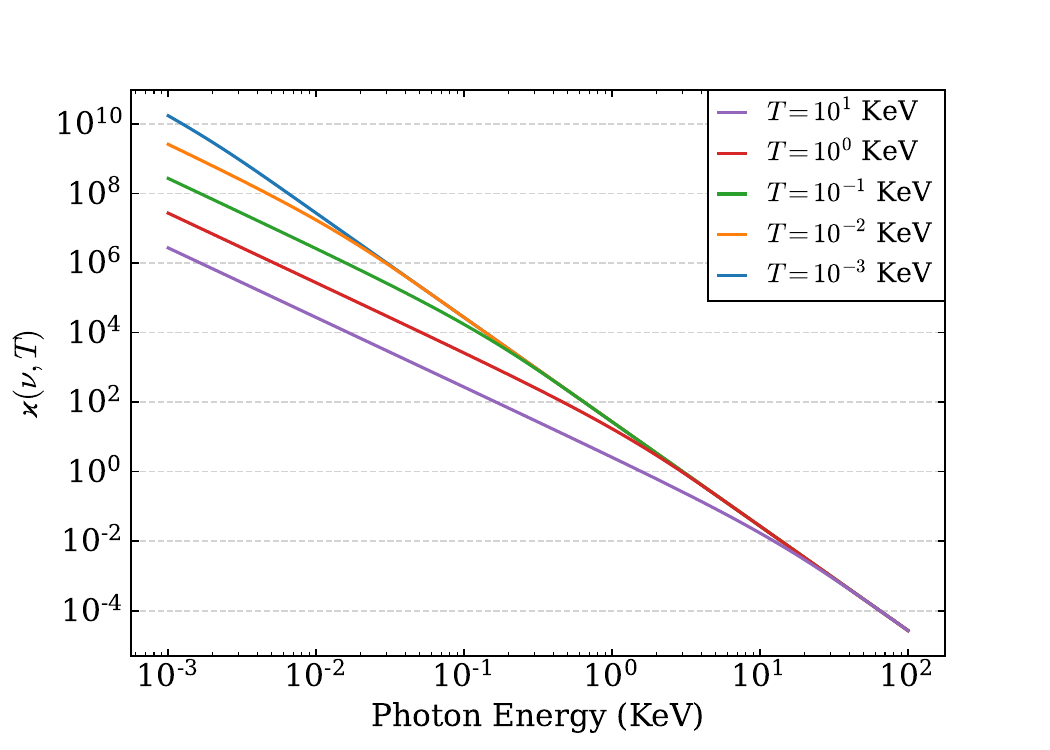}
	\caption{The F-C test spectral opacity $\varkappa(\nu,T)$ (ref. Eq. \eqref{eq:fc-opac}) plotted vs photon frequency for several material temperatures. \label{fig:fc_opac_vs-nu} }
\end{figure}

%
\subsection{Analysis of POD Databases \& Singular Values}
The singular value distributions of the weighted snapshot matrices ($\hat{\mathbf{A}}$) holding the FOM solution for $\bar{I}$ are shown in Fig. \ref{fig:Ibar_Svals}. Singular values for FOM solutions found with both $s_\text{max}=\infty$ and $s_\text{max}=1$ are shown. The difference in singular values in these two cases is insignificant.
Note that the outer iterations converge fast and   the iterative solution obtained with a single transport sweep in case of $s_\text{max}=1$ well approximates the problem solution \cite{gol'din-1964,gol'din-CMMP-1966,dya-jcp-2019}.
In both plots there are three discernible regions of sharp decrease in singular value magnitudes, each followed by a region where the rate of decrease slows dramatically. The third and last region of stagnation is where the limits of numerical precision become dominant and no further reduction is possible. Table \ref{tab:Ibar_ranks} displays the ranks of expansion $k$ that correspond to several different values of $\xi$, calculated via 
Eq. \eqref{eq:PODerr_k}. The values shown are calculated for the FOM solution using $s_\text{max}=\infty$. The ranks for the solution using $s_\text{max}=1$ are identical and thus not shown.

\begin{figure}[ht!]
	\vspace{-.5cm}
	\centering
	\subfloat[$s_\text{max}=\infty$]{\includegraphics[width=.5\textwidth]{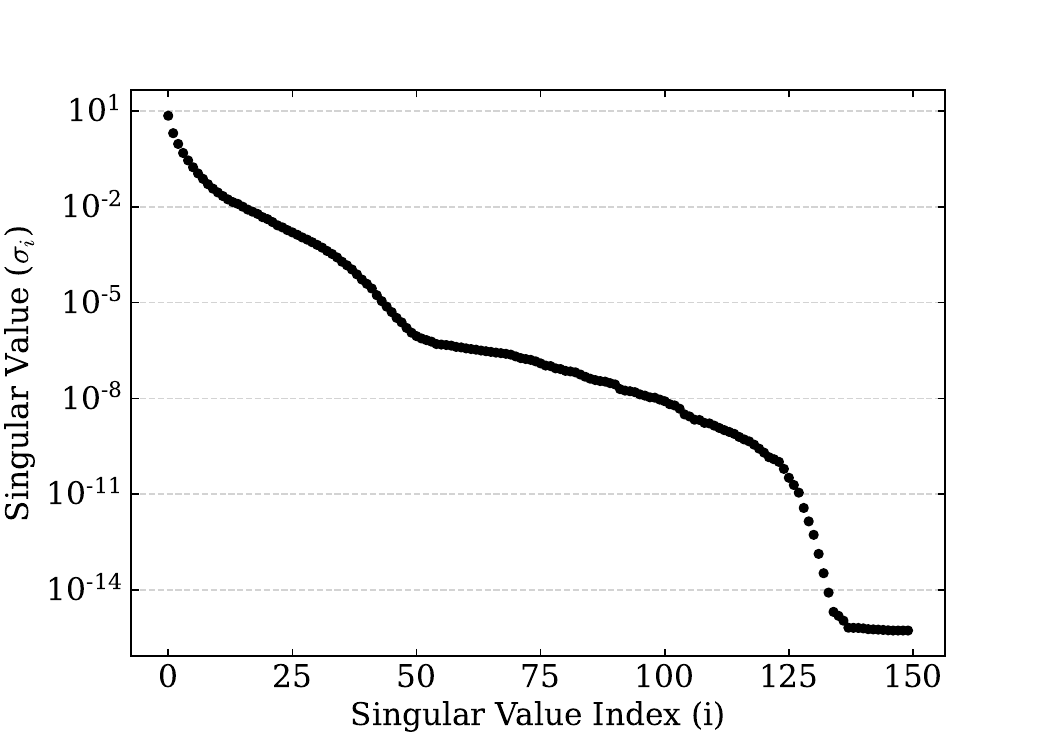}}
	\subfloat[$s_\text{max}=1$]{\includegraphics[width=.5\textwidth]{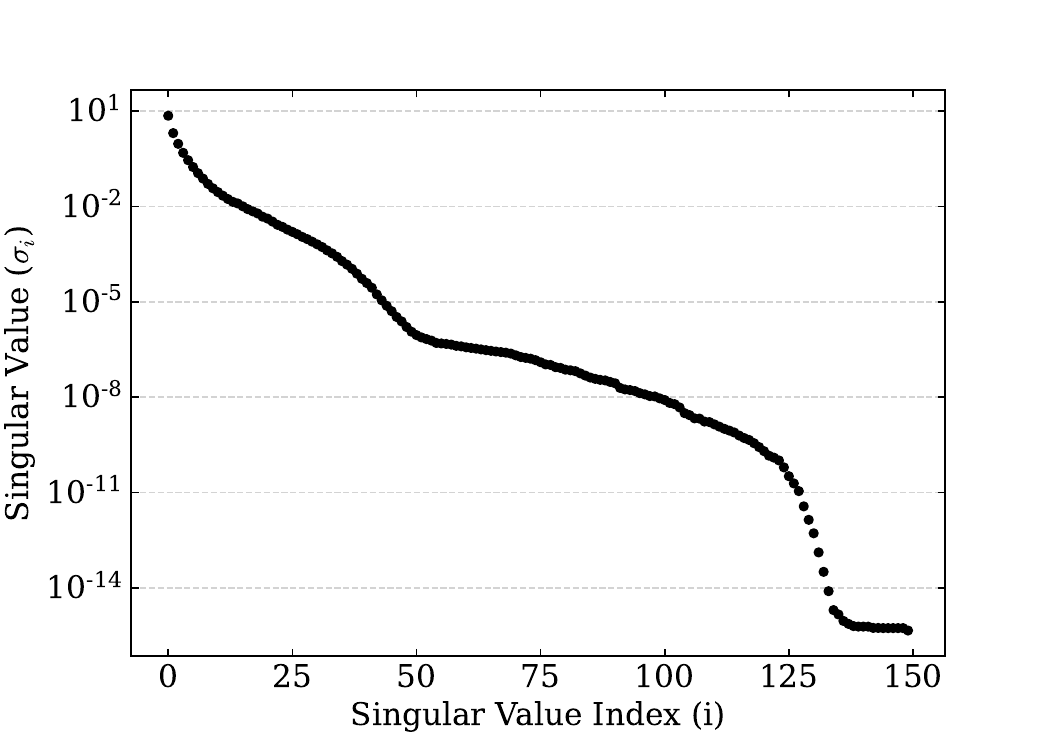}}
	\caption{Singular value spectra of the weighted snapshot matrices ($\hat{\mathbf{A}}$) holding $\bar{I}$ obtained by the FOM with $s_\text{max}=\infty$ and $s_\text{max}=1$ }
	\label{fig:Ibar_Svals}
\end{figure}

\begin{figure}[ht!]
	\centering
	\def\arraystretch{1.05}
	\begin{tabular}{|l|l|l|l|l|l|l|l|l|}
	\hline
	$\xi$ & $10^{-2}$ & 	$10^{-4}$ & 	$10^{-6}$ & 	$10^{-8}$ & 	$10^{-10}$ & 	$10^{-12}$ & 	$10^{-14}$ & 	$10^{-16}$	 \\ \hline
     $k$ &  9 & 	32	 & 45	 & 88	 & 117	 & 128	 & 132	 & 149	\\ \hline
	\end{tabular}
	\captionof{table}{Ranks $k$ for the POD expansion of $\bar{I}$ corresponding to different values of $\xi$
		\label{tab:Ibar_ranks}}
\end{figure}

\begin{figure}[ht!]
	\vspace{-.5cm}
	\centering
	\includegraphics[width=.5\textwidth]{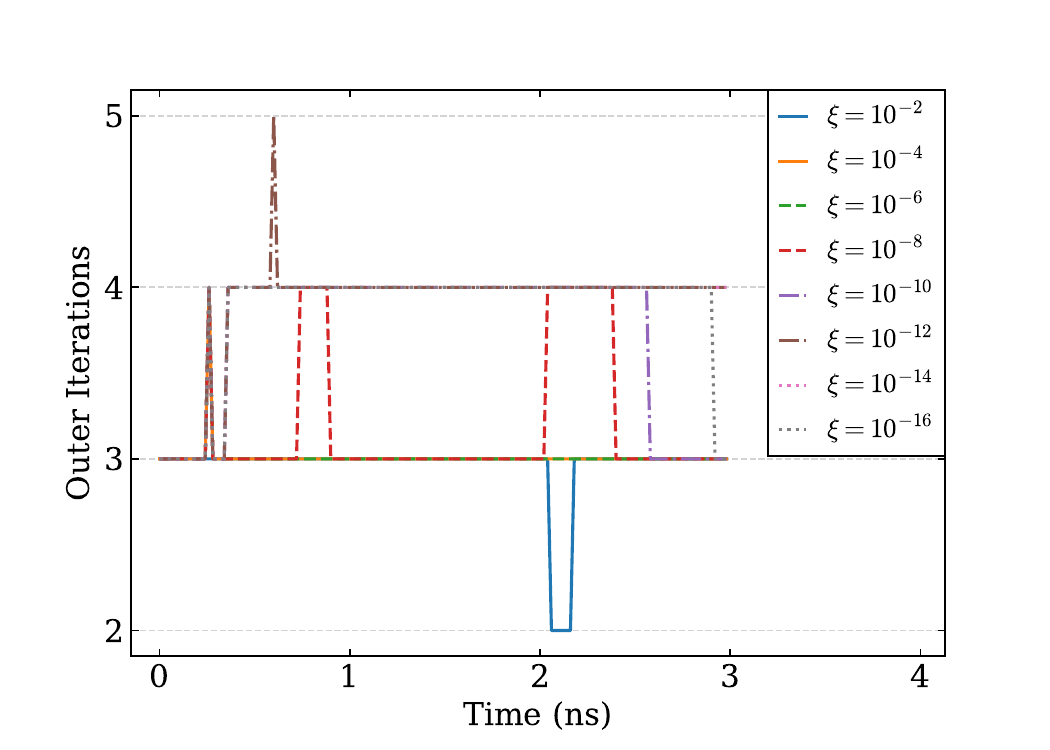}
	\caption{ Outer iteration count ($s$) per time step using the ROM
 (ref. Alg. \ref{alg:rom_alg}) with $s_\text{max}=\infty$ and various $\xi$ values.
Relative convergence criterion is set to $10^{-14}$.
}
	\label{fig:nbte_RTits}
\end{figure}

Fig. \ref{fig:nbte_RTits} plots  the  outer iteration counts $(s)$ for the ROM
using $s_\text{max}=\infty$ for different values of $\xi$ at each time step. Most time steps are solved in 3 or 4  outer
iterations using the ROM, with only a single time step requiring 5 iterations for several $\xi$. The iteration count increases as $\xi$ is decreased; for $\xi\geq 10^{-8}$ most time steps are solved with 3 outer iterations, whereas for $\xi\leq 10^{-10}$ most time steps require 4 solves.

%
\subsection{Error Analysis \& Convergence of ROMs}
The accuracy of the ROM w.r.t. the FOM solution in discrete space is now analyzed, along with the convergence behavior of its errors with $\xi$. Two cases are investigated: (i) $s_\text{max}=\infty$ and (ii) $s_\text{max}=1$ for both the offline and online stages of the ROM.
Let the ROM using $s_\text{max}=s_1$ outer iterations in the offline FOM calculations and $s_\text{max}=s_2$ for the online ROM iterations be written concisely as model $(s_1 | s_2)$. Therefore case (i) is model $(\infty | \infty)$ and case (ii) is model $(1 | 1)$. In each case the online stage of the ROM is consistent with the data generated in the offline stage, and therefore both models are expected to see convergence in their errors with $\xi$ to the FOM solution used in generating their POD bases.

\begin{figure}[ht!]
	\centering
	\subfloat[Material Temperature]{\includegraphics[width=.5\textwidth]{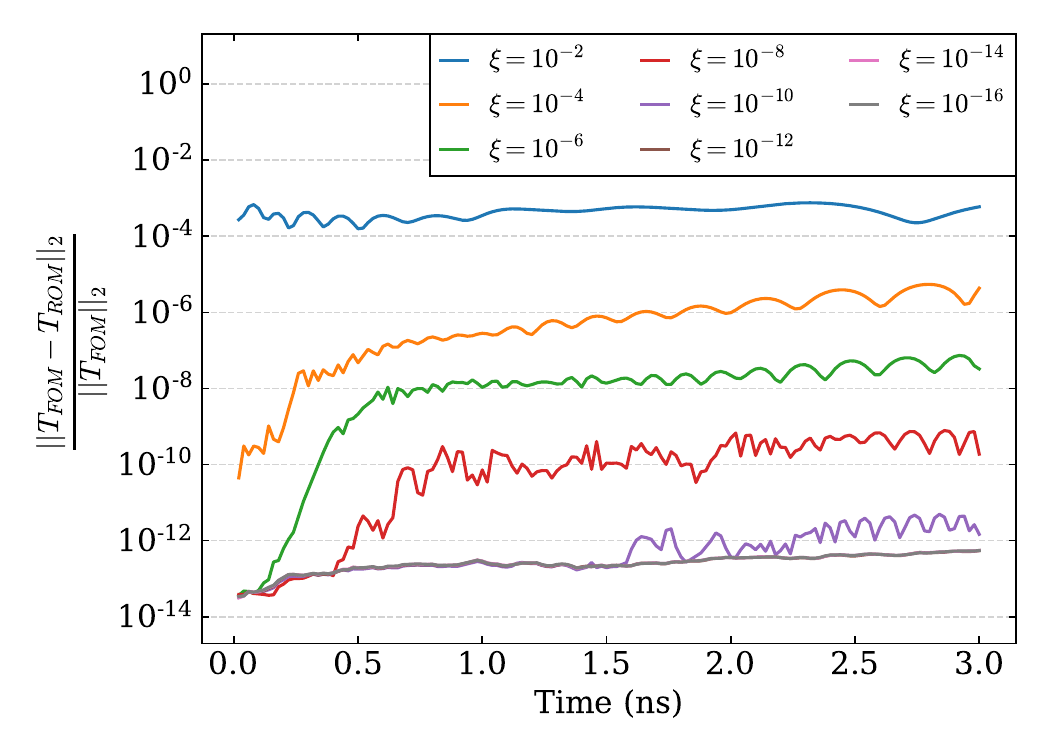}}
	\subfloat[Radiation Energy Density]{\includegraphics[width=.5\textwidth]{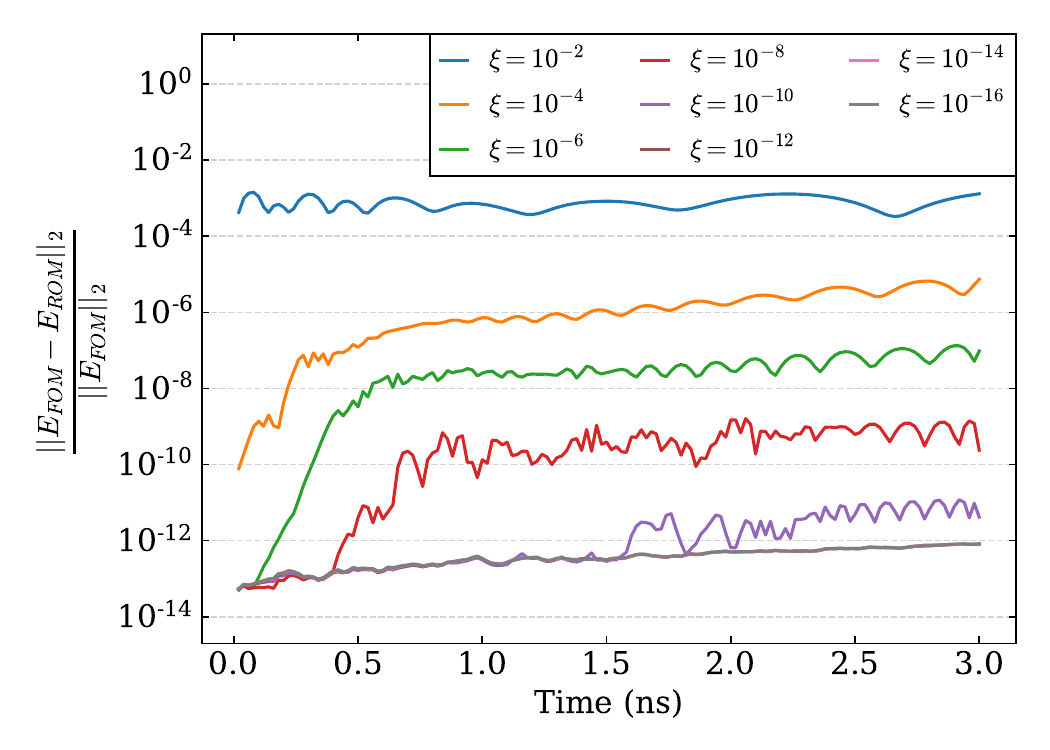}}
	\caption{Relative errors w.r.t. the FOM using $s_\text{max}=\infty$ in the 2-norm of the  ROM using $s_\text{max}=\infty$ in its offline and online stages (ref. Alg. \ref{alg:rom_alg}) for several $\xi$, plotted vs time}
	\label{fig:2nrm-errs_Phi-ref}
	\centering
	\subfloat[Material Temperature]{\includegraphics[width=.5\textwidth]{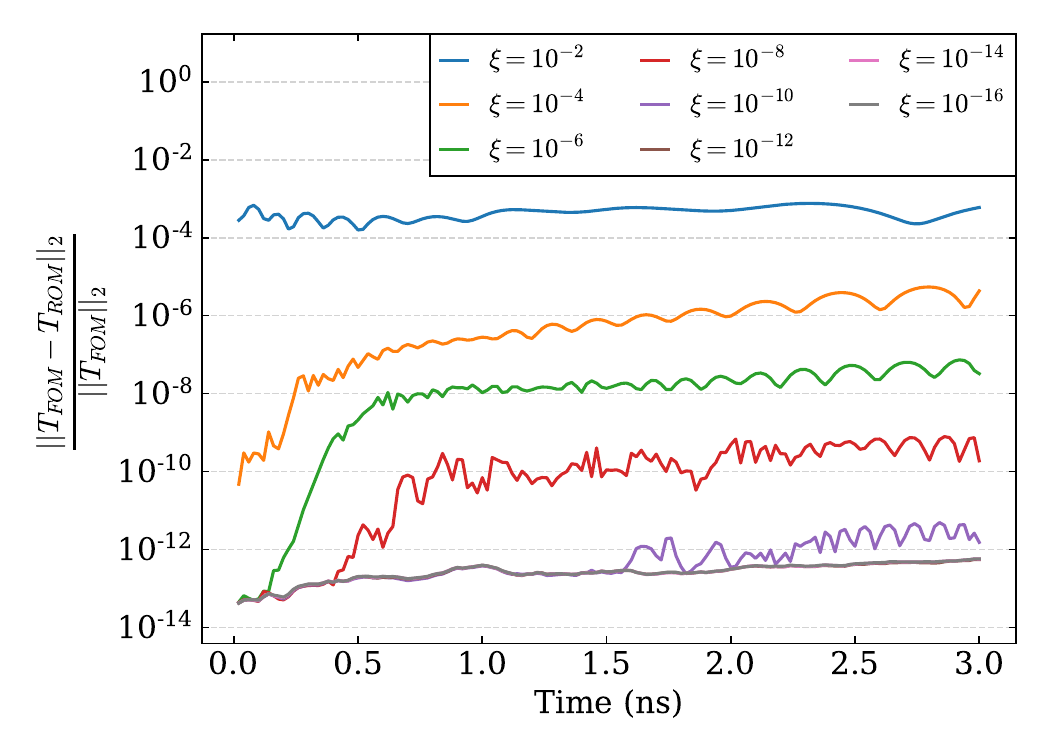}}
	\subfloat[Radiation Energy Density]{\includegraphics[width=.5\textwidth]{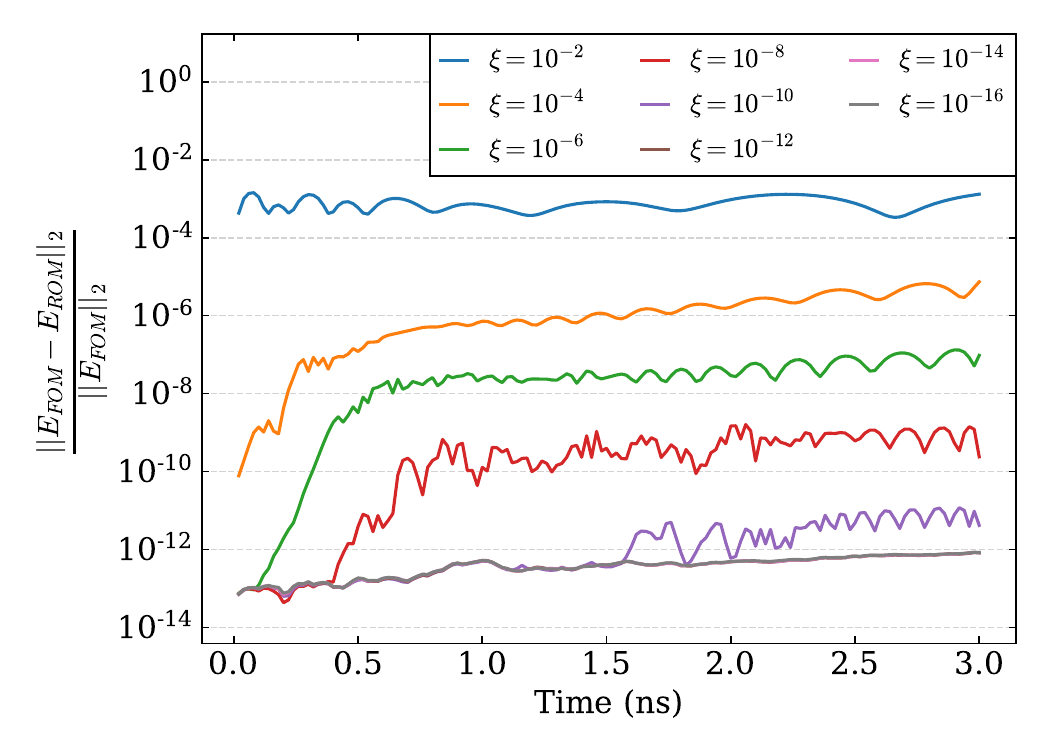}}
	\caption{Relative errors w.r.t. the FOM using $s_\text{max}=1$ in the 2-norm of the
 ROM using $s_\text{max}=1$ in its offline and online stages (ref. Alg. \ref{alg:rom_alg}) for several $\xi$, plotted vs time}
	\label{fig:2nrm-errs_Phi-ref_rt1_db-rt1_comp-rt1}
\end{figure}

Figs. \ref{fig:2nrm-errs_Phi-ref} and \ref{fig:2nrm-errs_Phi-ref_rt1_db-rt1_comp-rt1} plot the relative errors of the material temperature $(T)$ and total radiation energy density $(E)$ computed
with the ROM in the 2-norm over space at every instance of time for model $(\infty | \infty)$ and model $(1 | 1)$, respectively. The errors are calculated w.r.t. the FOM solution used to generate their respective databases (i.e. using $s_\text{max}=\infty$ and $s_\text{max}=1$, respectively). Each curve on the plots corresponds to a different value of $\xi$ used to define the ROM. As $\xi$ is decreased, the ROM errors decrease uniformly in time and stagnate after $\xi<10^{-10}$ at a level less than $10^{-12}$ which can be considered converged to the FOM solution within the bounds of finite precision. For all considered $\xi$ except for $\xi=10^{-2}$, the relative errors for both $T$ and $E$ increase rapidly in the beginning stage of the problem before leveling off, only slightly increasing with time.
Figs. \ref{fig:2nrm-errs_trend_Phi-ref} and \ref{fig:2nrm-errs_trend_Phi-ref_rt1_db-rt1_comp-rt1} plot the same relative errors for model $(\infty | \infty)$ and model $(1 | 1)$, respectively, vs $\xi$ instead of vs time. Each curve in these plots corresponds to a select instance of time. Here let the ROM errors be written as the function
$\vartheta(X) = \frac{\|X_\text{FOM}-X_\text{ROM}\|_2}{\|X_\text{FOM}\|_2}$ $(X=T,E)$.
Then for both models, the primary behavior is that 
$\vartheta(E)\approx\vartheta(T)\approx\xi\cdot10^{-2}$, until stagnation occurs on the level of
$\vartheta(E)\approx\vartheta(T)\approx 10^{-12}$.

\begin{figure}[ht!]
	\vspace{-.5cm}
	\centering
	\subfloat[Material Temperature]{\includegraphics[width=.5\textwidth]{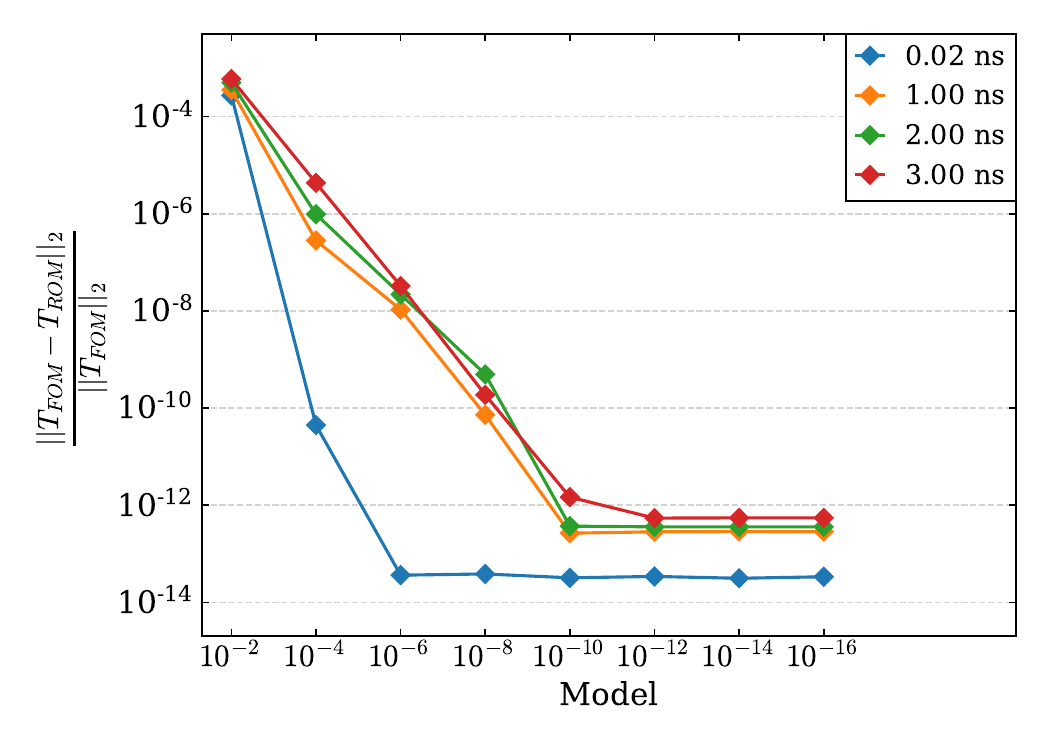}}
	\subfloat[Radiation Energy Density]{\includegraphics[width=.5\textwidth]{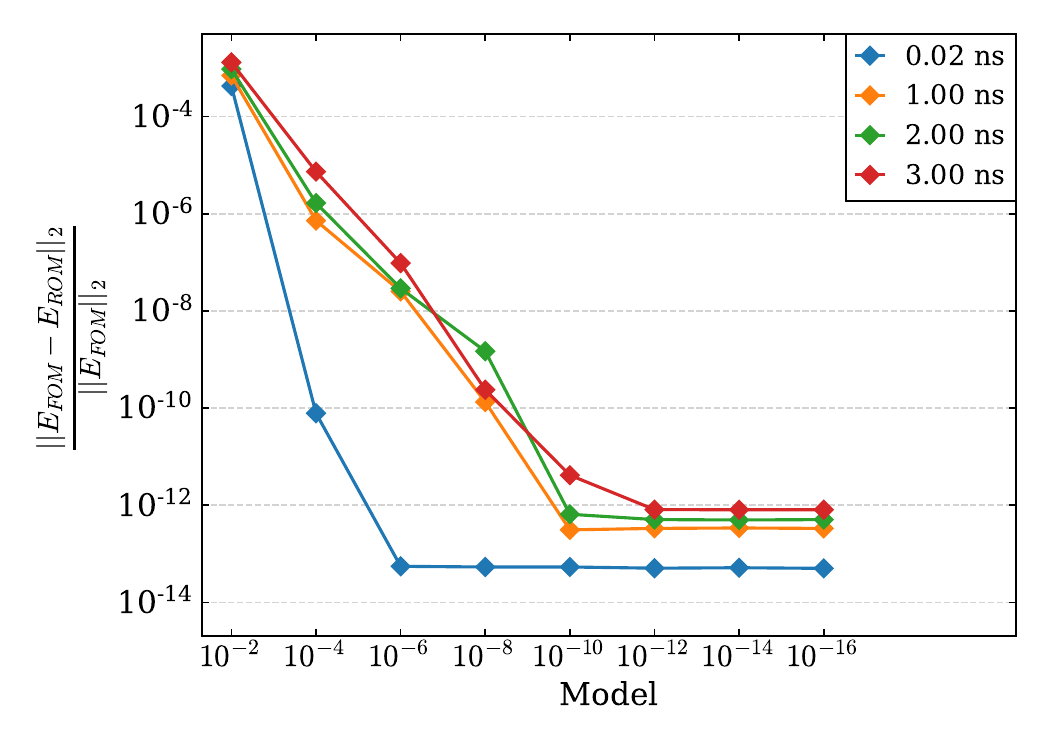}}
	\caption{Relative errors w.r.t. the FOM using $s_\text{max}=\infty$ in the 2-norm of the ROM using $s_\text{max}=\infty$ in its offline and online stages  (ref. Alg. \ref{alg:rom_alg}) at several times, plotted vs $\xi$}
	\label{fig:2nrm-errs_trend_Phi-ref}
	\centering
	\subfloat[Material Temperature]{\includegraphics[width=.5\textwidth]{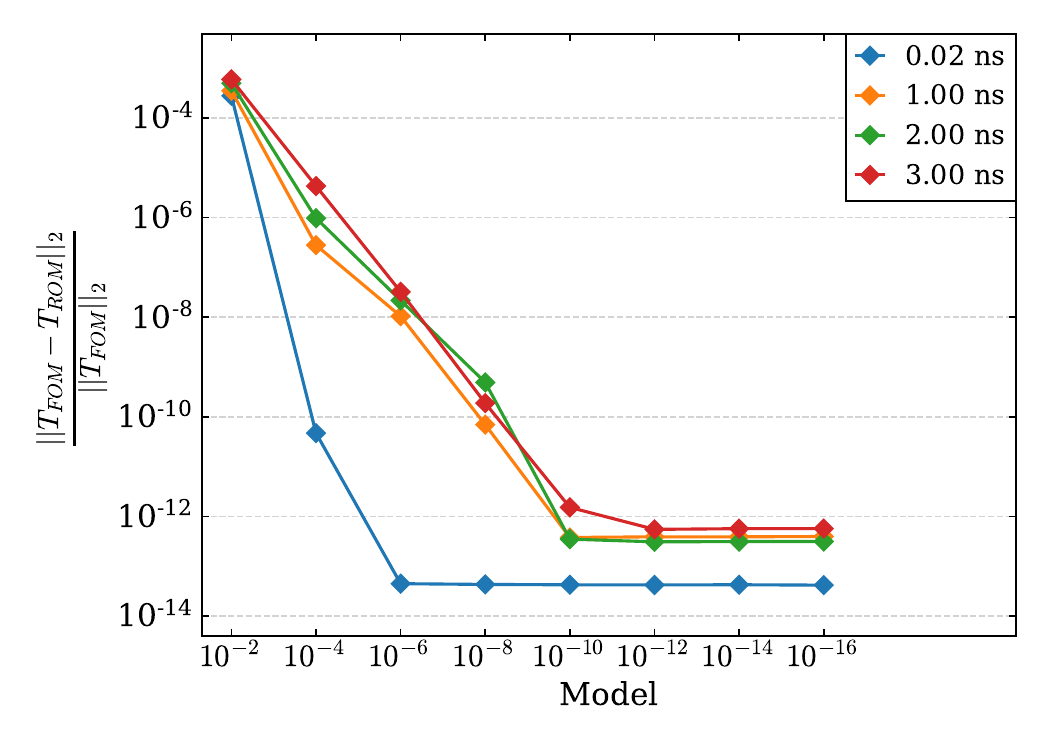}}
	\subfloat[Radiation Energy Density]{\includegraphics[width=.5\textwidth]{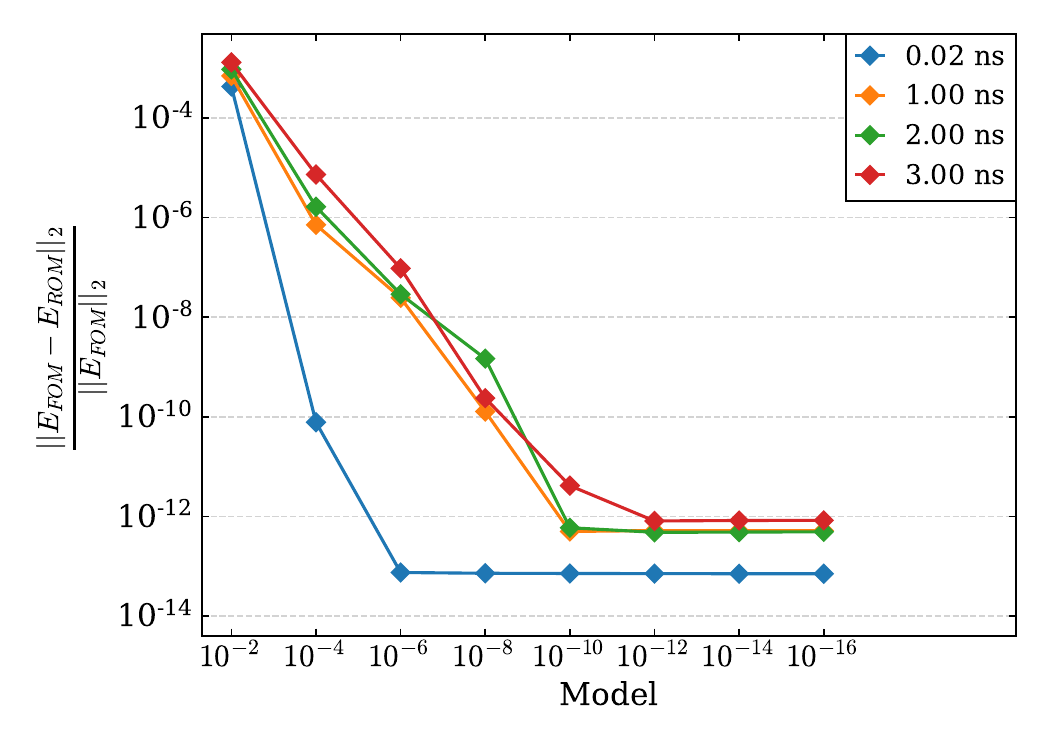}}
	\caption{Relative errors w.r.t. the FOM using $s_\text{max}=1$ in the 2-norm of the ROM using $s_\text{max}=1$ in its offline and online stages  (ref. Alg. \ref{alg:rom_alg}) at several times, plotted vs $\xi$}
	\label{fig:2nrm-errs_trend_Phi-ref_rt1_db-rt1_comp-rt1}
\end{figure}

%
\subsection{Performance of the ROM using a Single Outer Iteration}
In this section, the ROM using $s_\text{max}=1$ in its online stage will be evaluated w.r.t. the FOM solution found using $s_\text{max}=\infty$ to determine how well this
version of the ROM can reproduce the fully-converged F-C test solution. Cases when the POD basis functions are generated using $s_\text{max}=\infty$ and $s_\text{max}=1$ are considered, i.e. model $(\infty | 1)$ and model $(1 | 1)$. This is done to quantify effects on the ROM accuracy stemming from the quality of used POD basis functions.

Fig. \ref{fig:2nrm-errs_Phi-ref_rt1_db-fom_comp-fom} plots the relative errors of $T$ and $E$ in the 2-norm over space at every instance of time for model$(\infty | 1)$ w.r.t. the FOM solution generated with $s_\text{max}=\infty$. Fig. \ref{fig:2nrm-errs_Phi-ref_rt1_db-rt1_comp-fom} plots the same errors calculated for the model$(1 | 1)$ solution. The error levels in both $T$ and $E$ for all $\xi$ are nearly identical between the two models. Only $\xi\geq 10^{-8}$ is shown in both plots, as the error levels saturate by $\xi=10^{-8}$ and no longer decrease with further increases in rank. This saturation level in accuracy for both models is actually the same relative difference between the FOM solutions using $s_\text{max}=\infty$ and $s_\text{max}=1$. Note that $\xi=10^{-2}$ is the only value that provides a solution whose errors are above the saturation point for all time instances. When $\xi=10^{-4}$, the saturation point is met for times between 0 and 1.5 ns, where the largest errors are present. This is because as the problem evolves in time, the transients evolve more slowly and can be better captured by a scheme with a single outer iteration.

These results establish that when the ROM uses $s_\text{max}=1$ in its online stage, there is little accuracy lost when $s_\text{max}=1$ is also used in the offline stage to generate the ROM data, compared to using $s_\text{max}=\infty$ in the offline stage. The observed accuracy for this model is also at the level seen with the FOM solution generated using $s_\text{max}=1$, regardless of $s_\text{max}$ used in the offline stage.

\begin{figure}[hb!]
	\centering
	\subfloat[Material Temperature]{\includegraphics[width=.5\textwidth]{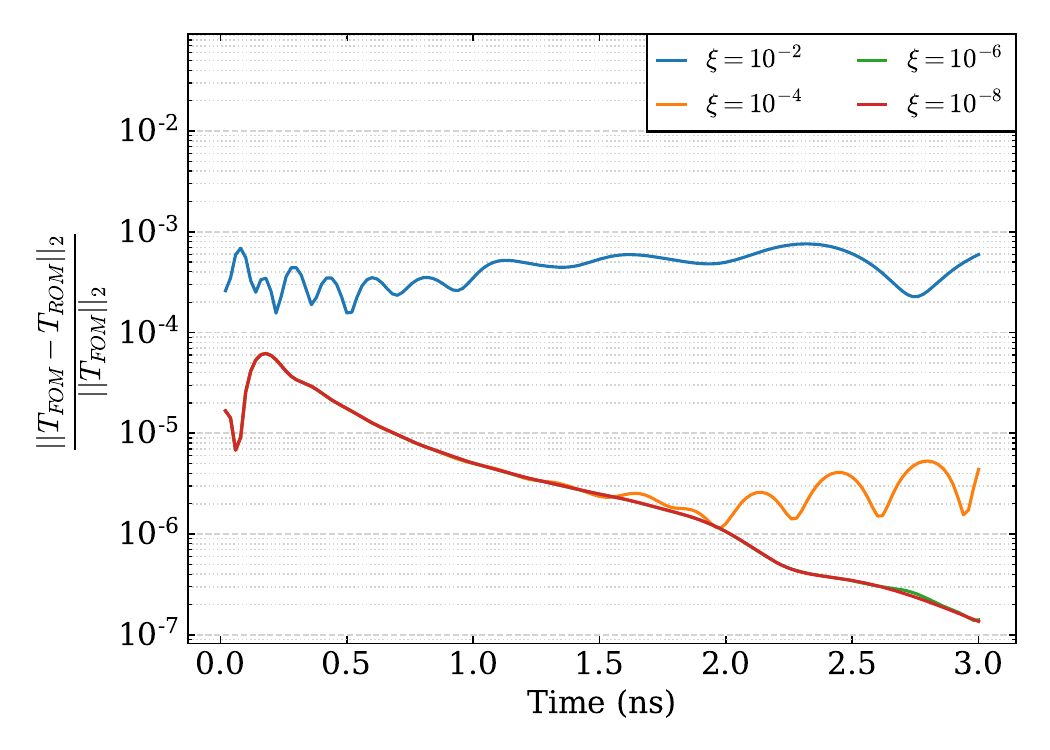}}
	\subfloat[Radiation Energy Density]{\includegraphics[width=.5\textwidth]{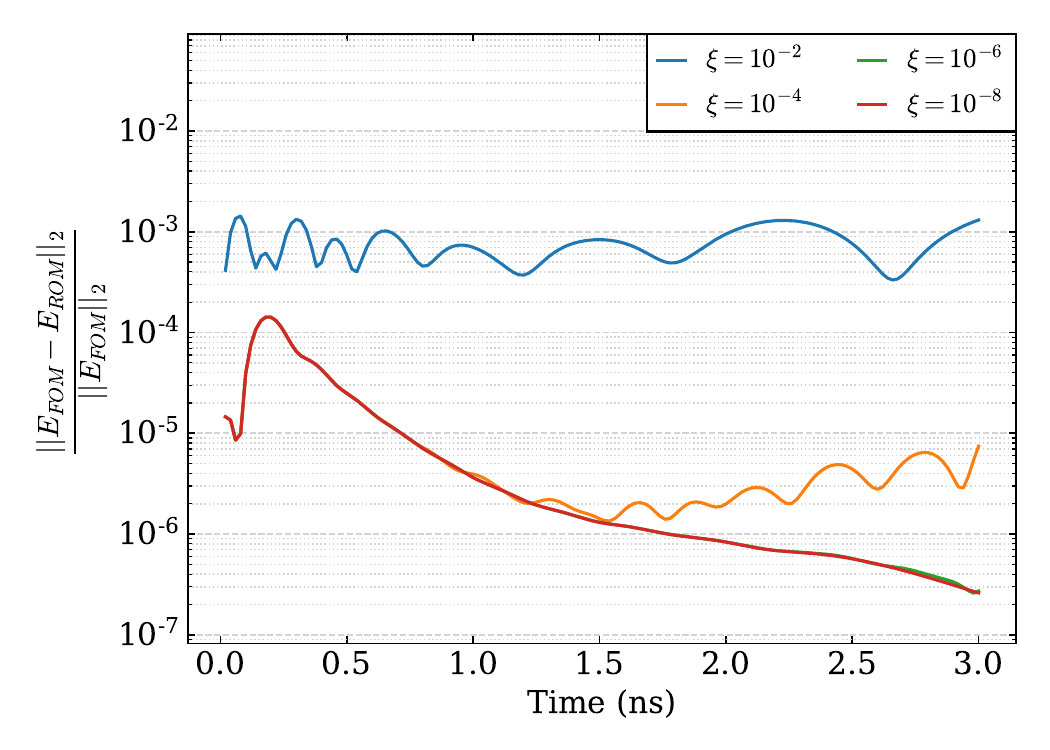}}
	\caption{Relative errors w.r.t. the FOM using $s_\text{max}=\infty$ (ref. Alg. \ref{alg:rom_alg}) in the 2-norm of the ROM using $s_\text{max}=1$ in its online stage (ref. Alg. \ref{alg:rom_alg}) and $s_\text{max}=\infty$ in its offline stage for several $\xi$, plotted vs time}
	\label{fig:2nrm-errs_Phi-ref_rt1_db-fom_comp-fom}
	\centering
	\subfloat[Material Temperature]{\includegraphics[width=.5\textwidth]{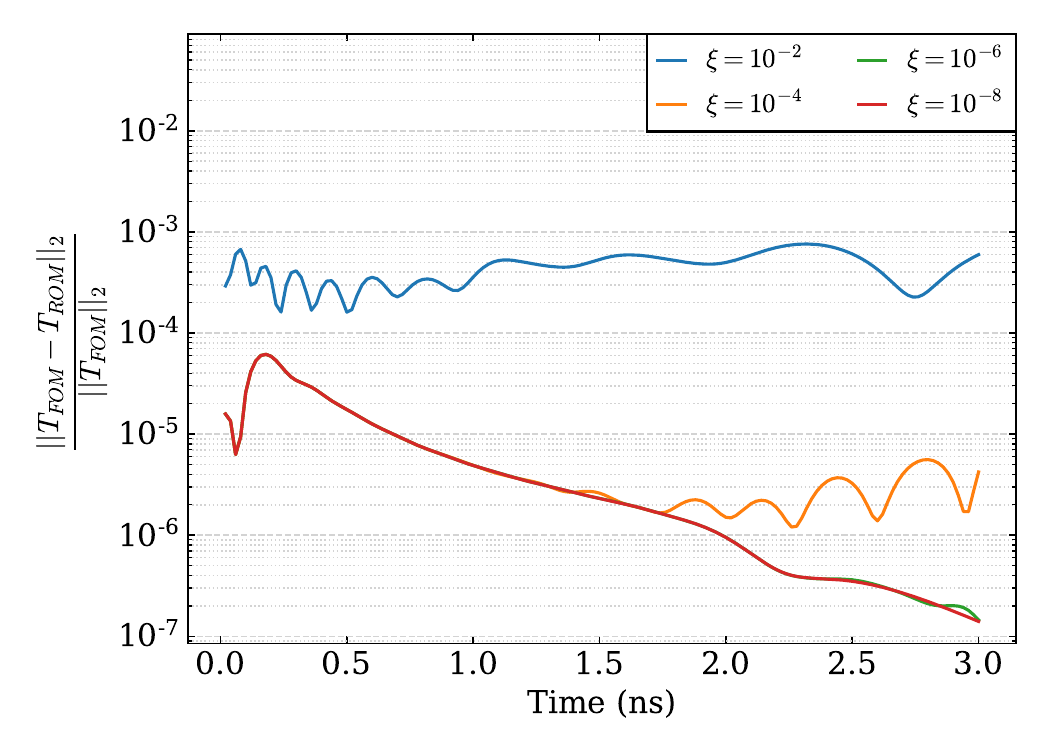}}
	\subfloat[Radiation Energy Density]{\includegraphics[width=.5\textwidth]{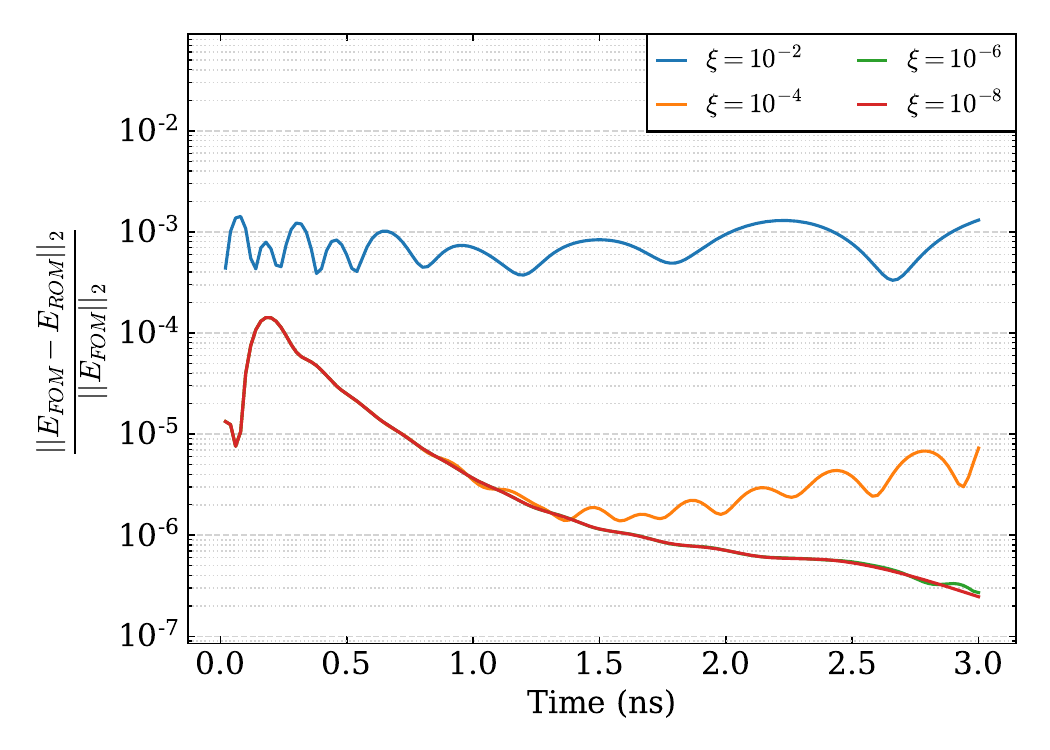}}
	\caption{Relative errors w.r.t. the FOM using $s_\text{max}=\infty$ (ref. Alg. \ref{alg:rom_alg}) in the 2-norm of the ROM using $s_\text{max}=1$ in both its online (ref. Alg. \ref{alg:rom_alg}) and offline stages for several $\xi$, plotted vs time}
	\label{fig:2nrm-errs_Phi-ref_rt1_db-rt1_comp-fom}
\end{figure}

\clearpage
%
\subsection{Analysis of Radiation Wave Propagation}
The next step in this analysis is to consider how well the proposed ROM captures
the essential
macroscopic
physics of the travelling wave.
The F-C test problem mimics the important class of high-temperature laser driven radiation shock experiments \cite{Moore-2015,guymer-2015,fryer-2016,fryer-2020,fryer-2022}.
One metric of particular importance in these studies is the {\it breakout time} of radiation \cite{Moore-2015,fryer-2016}.
In the context of ROM construction, breakout time is an integral quantity that can be used to quantify how well the ROM reproduces the nonlinear wavefront of radiation as it propagates across the problem domain.
The radiation wave produced in the F-C test travels from left to right and correspondingly the notion of breakout time is associated with radiation levels at the right boundary.
Typically in the literature, breakout time is measured as the elapsed time until a certain level of radiative flux is detected \cite{Moore-2015,fryer-2016}.
Here we consider not only the radiation flux, but the energy density and material temperature at the right boundary of the F-C test as well.
We consider the boundary-averages of these quantities in time, defined as follows:
\begin{equation}
 		\bar{F}_R = \frac{1}{L_R}\int_{0}^{L_R} \boldsymbol{e}_x\cdot\boldsymbol{F}(x_R,y)\ dy, \quad \bar{E}_R = \frac{1}{L_R}\int_{0}^{L_R} E(x_R,y)\ dy, \quad  \bar{T}_R = \frac{1}{L_R}\int_{0}^{L_R} T(x_R,y)\ dy,
 \end{equation}

\noindent where $L_R = x_R = 6\text{cm}$. The FOM solution for these quantities is shown in Fig. \ref{fig:rbndvals_fom}. Each of $\bar{F}_R$, $\bar{E}_R$ and $\bar{T}_R$ represent a different measure of the amount of radiation that has reached the drive-opposite side of the test domain over time, which is what might be measured by a detector during an experiment.
\begin{figure}[ht!]
	\centering
	\subfloat[ $\bar{F}_R$]{\includegraphics[width=.33\textwidth]{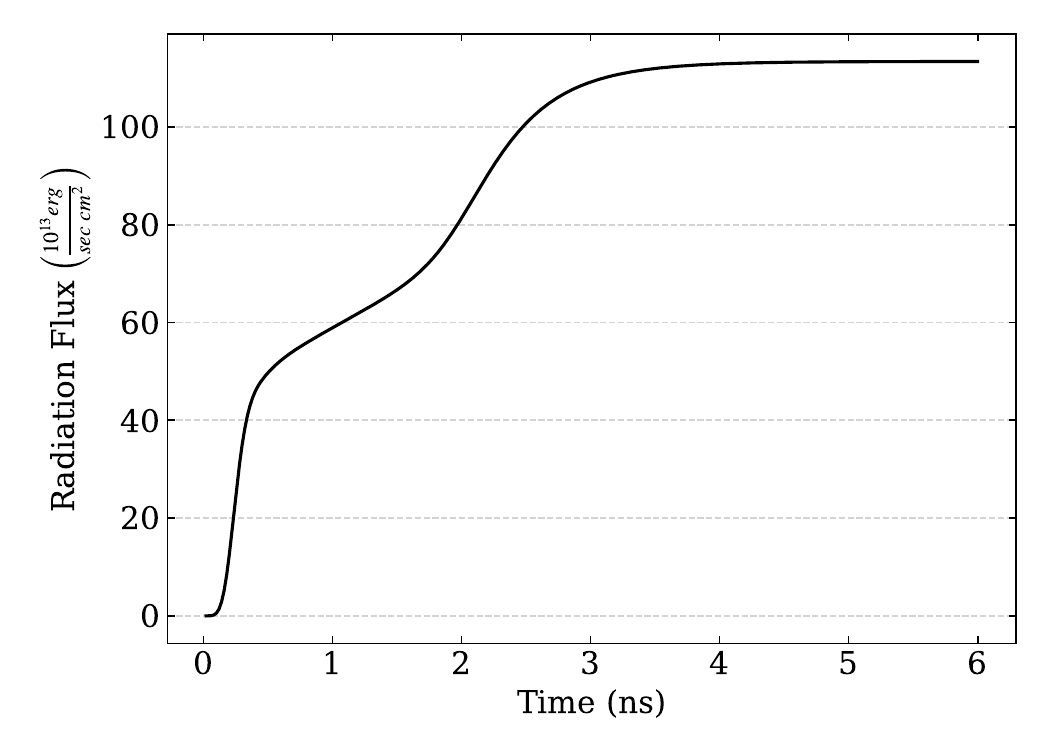}}
	\subfloat[$\bar{E}_R$]{\includegraphics[width=.33\textwidth]{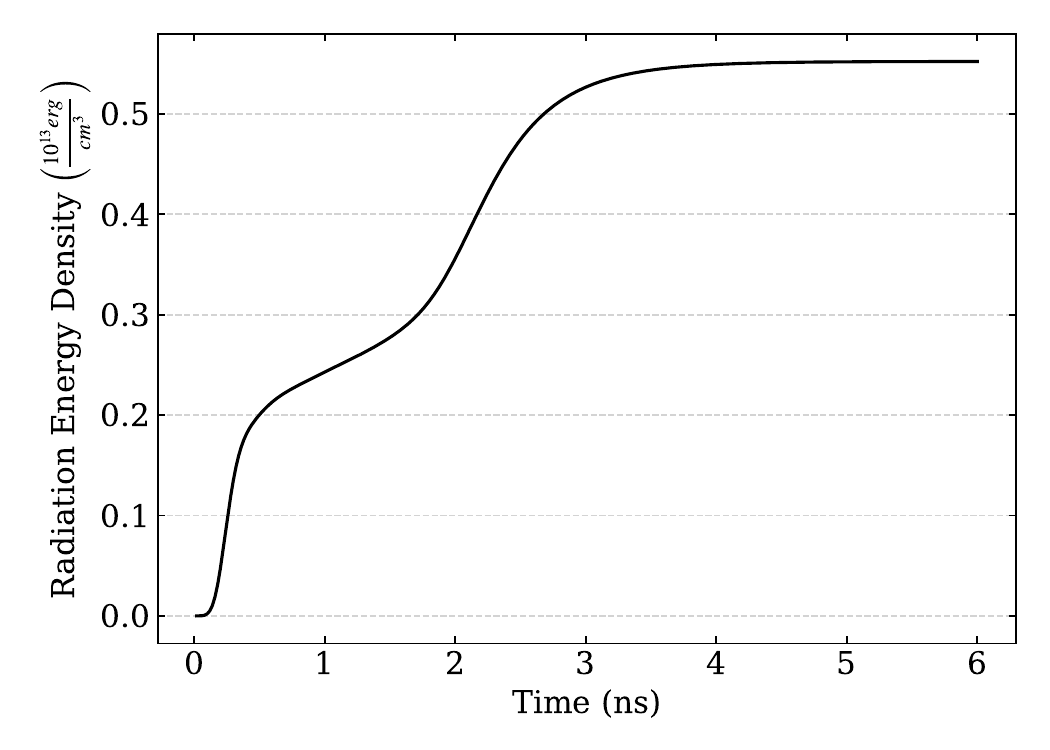}}
	\subfloat[$\bar{T}_R$]{\includegraphics[width=.33\textwidth]{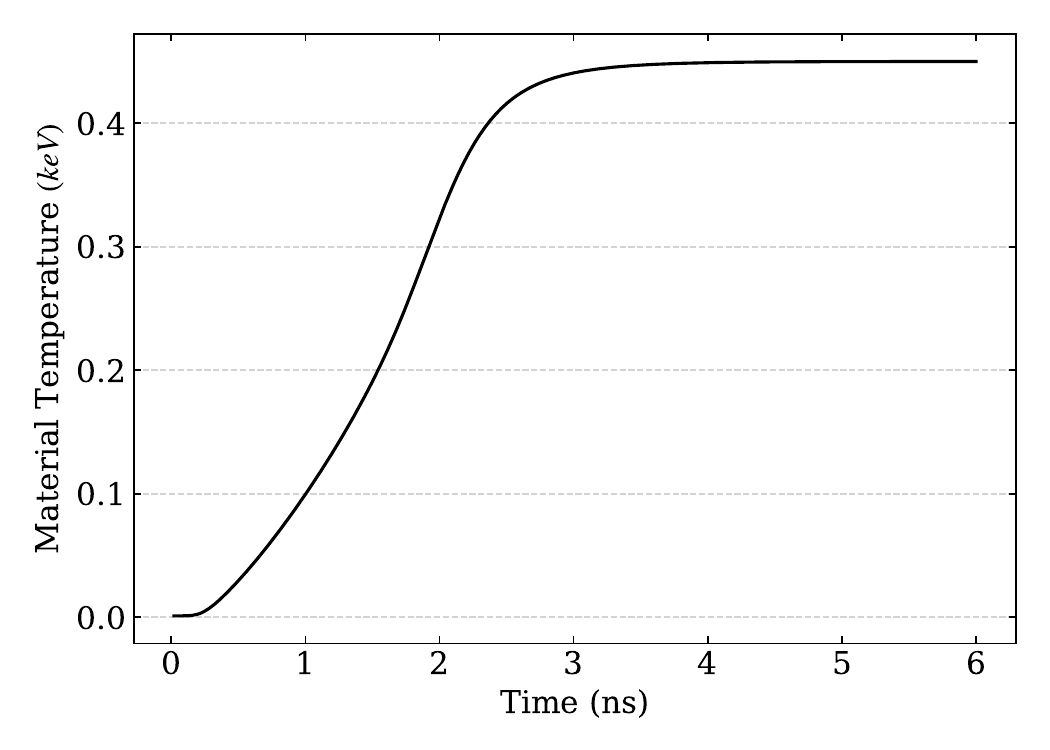}}
	\caption{Total radiation flux $(\bar{F}_R)$, total radiation energy density $(\bar E_R)$ and material temperature $(\bar T_R)$
		averaged over the right boundary of the spatial domain plotted vs time. Shown solutions are generated by the FOM.
		\label{fig:rbndvals_fom} }
\end{figure}

Fig. \ref{fig:rbndvals} plots the relative error for each quantity produced by model $(\infty | \infty)$ at each instance of time w.r.t. the FOM solution. All considered values of $\xi$ are included here, and similar behavior to Fig. \ref{fig:2nrm-errs_Phi-ref} is seen. Each quantity is observed to converge to the FOM solution as $\xi$ decreases, and errors propagate in time with a slight rate of increase for $\xi\leq 10^{-4}$. The errors in $\bar{F}_R$, $\bar{E}_R$ and $\bar{T}_R$ are generally higher than the 2-norm error of $T$ and $E$ for any given $\xi$ before the limits of numerical precision become dominant. This difference in error levels is between one and two orders of magnitude. The observed errors are all approximately bounded by $\xi$.

\begin{figure}[ht!]
	\centering
	\subfloat[$\bar{F}_R$]{\includegraphics[width=.5\textwidth]{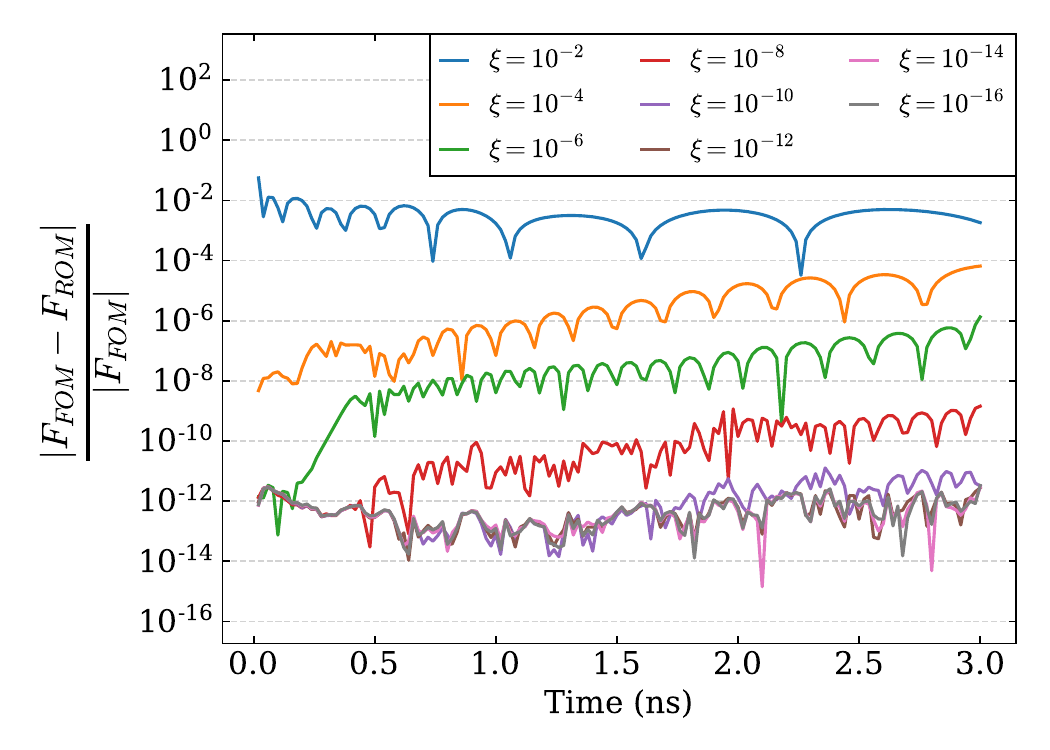}}\\
	\subfloat[$\bar E_R$]{\includegraphics[width=.5\textwidth]{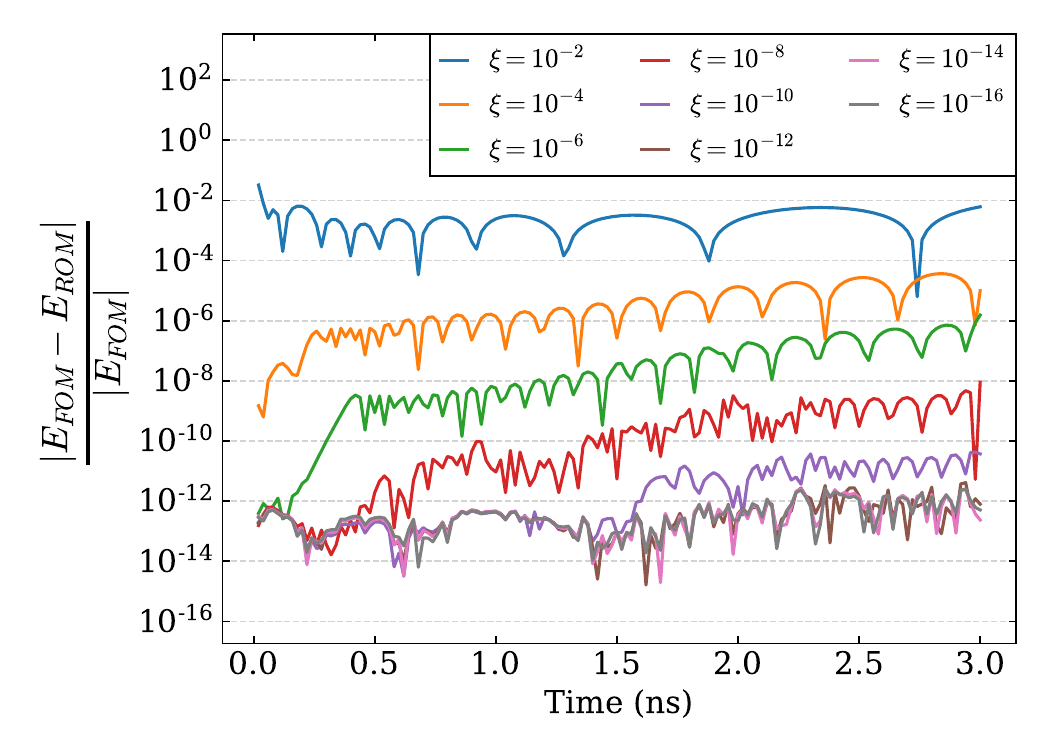}}
	\subfloat[$\bar T_R$]{\includegraphics[width=.5\textwidth]{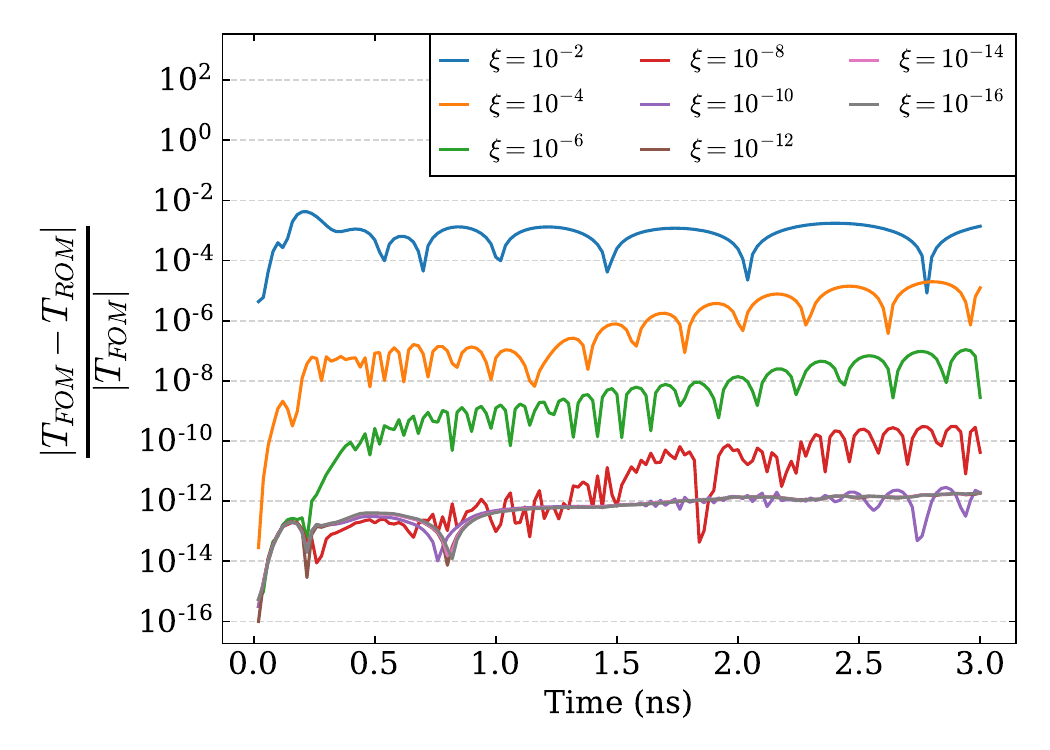}}
	\caption{Relative error for the ROMs using $s_\text{max}=\infty$ in the offline and online stages with various $\xi$ values
		for data located at and averaged over the right boundary of the domain.
		\label{fig:rbndvals} }
\end{figure}

%
Another important metric is the spectrum of the radiation wavefront. We analyze the radiation spectrum in the F-C test by calculating errors in the ROM radiation energy densities. These energy densities are averaged over the frequency interval they represent to produce $\bar{E}_g=\frac{E_g}{\nu_{g} - \nu_{g-1}}$.
We analyze the energy spectrum at the bottom corner of the right boundary of the F-C test in space. The solution at this point will be highly anisotropic.

 Fig. \ref{fig:Eg_spectrum} plots the frequency spectrum of radiation energy densities $(\bar{E}_g)$ for the F-C test obtained by the FOM at the chosen point in space.
This plot graphs the radiation energy densities vs photon energy
 at several instants of time (each curve represents a different instance of time).
 Each point is located at the center of a discrete energy group (see Table \ref{tab:freq_grps}) on the frequency-axis. The instants of time were chosen based off of Fig. \ref{fig:rbndvals_fom} to sufficiently sample temporal regions of interest.
The energy range is selected so that all energy groups except the last are shown.

\begin{figure}[ht!]
	\centering
	\includegraphics[width=.5\textwidth]{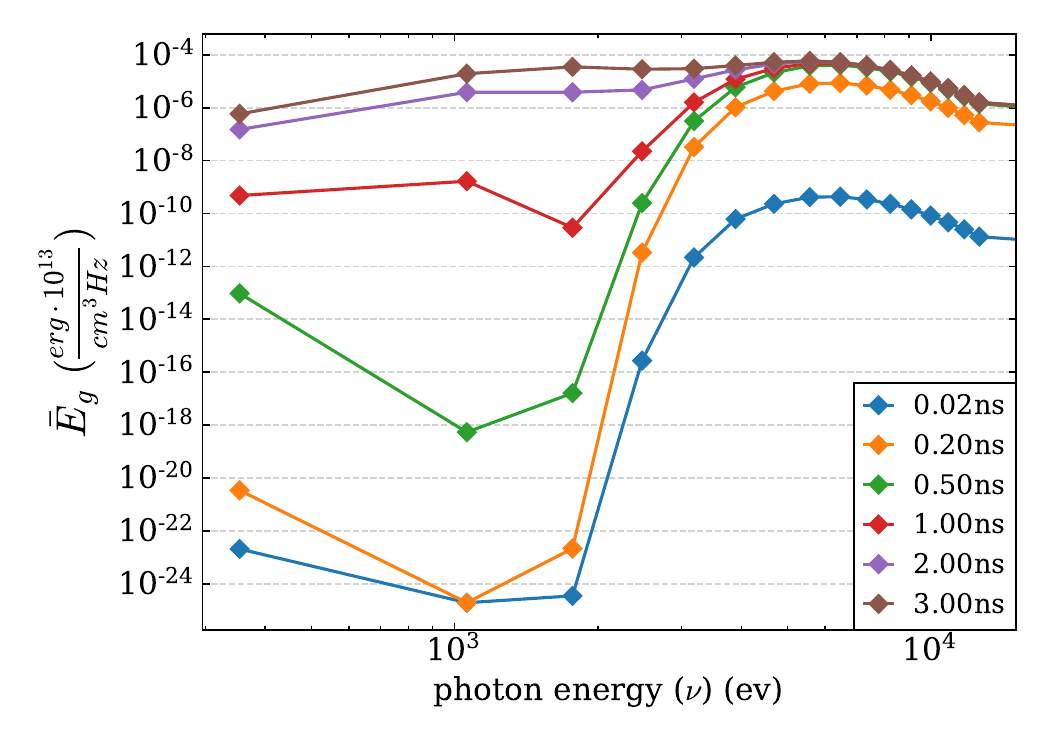}
	\caption{Radiation energy density spectrum located at the bottom corner of the right boundary, taken at several time instances.
		\label{fig:Eg_spectrum} }
\end{figure}

Fig. \ref{fig:Eg_spectrum_err_norms_crn} plots the relative errors of the ROM solution for $\bar{E}_g$ using model $(\infty | \infty)$ in the temporal 2-norm defined by
\begin{equation}
\| x(t) \|_2^t=\bigg(\int_{0}^{t^\text{end}}x(t)^2dt\bigg)^{1/2}.
\end{equation}
All considered values of $\xi$ are shown here to give a complete picture of how the radiation spectrum
converges in a temporal-integral sense. For all $\xi\leq 10^{-4}$ and before numerical precision becomes limiting, the high frequency groups are reproduced with significantly higher accuracy than low frequency groups. Each energy group decreases in error when $\xi$ is decreased.
\begin{figure}[ht!]
	\centering
	\includegraphics[width=.5\textwidth]{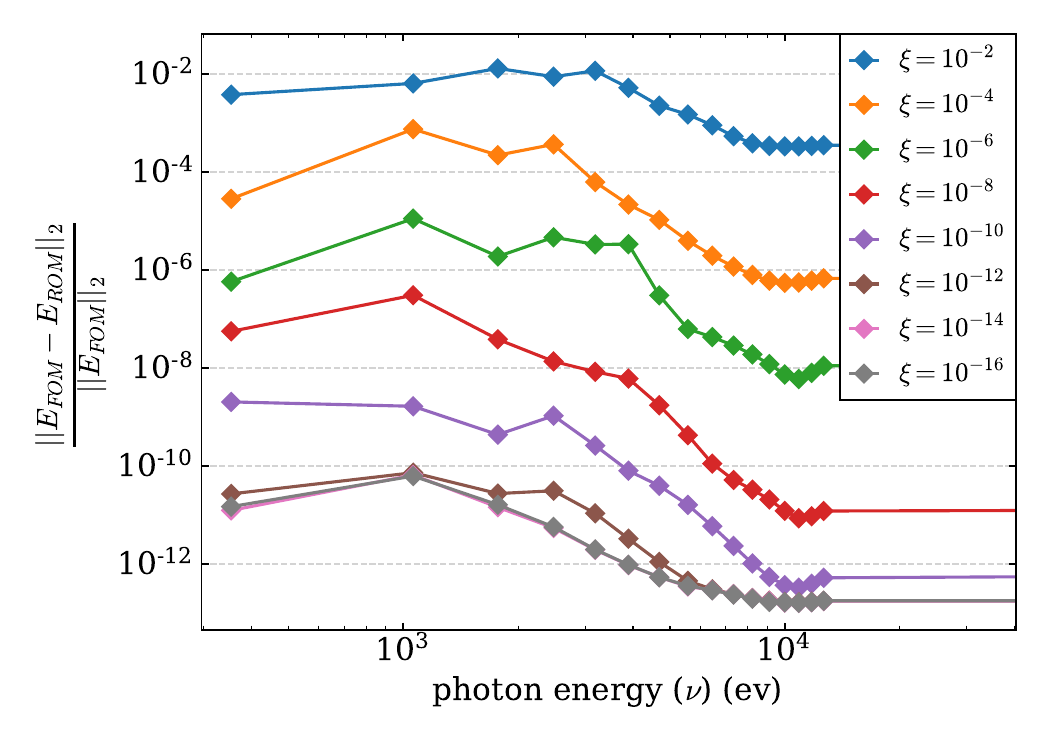}
	\caption{Relative errors of the radiation spectrum produced by the ROM using $s_\text{max}=\infty$ located at the bottom corner of the right boundary in the temporal 2-norm.
		\label{fig:Eg_spectrum_err_norms_crn} }
\end{figure}

%
%
\clearpage
\section{Conclusion} \label{sec:conclusion}
In this paper, a new ROM for TRT problems is presented based on (i) a POD-Petrov-Galerkin projection of the NBTE with scaling by material opacities and (ii) moment equations of the BTE.
A multilevel set of moment equations of the BTE is derived by means of the nonlinear projective approach.
The Eddington tensor which formulates exact closure for the system of moment equations is defined as a linear function of the NBTE solution.
A low-dimensional POD-Petrov-Galerkin projection of the NBTE over the entire phase space of the problem is constructed for the proposed ROM.
The Eddington tensor is expanded directly in a superposition of the proper orthogonal modes which represent the NBTE solution.
The solution of the projected NBTE defines coefficients of this expansion.

Detailed analysis of the ROM's properties and behavior was performed on the 2D F-C test problem which simulates evolution of a high-temperature supersonic radiation wavefront as it travels through a medium.
The ROM was shown to produce solutions to the F-C test with sufficient accuracy for practical simulations with very low-rank representations of the NBTE.
When outer iterations
are allowed to fully converge in both the offline and online stages, the ROM solution converges to the FOM solution with predicable behavior.
When only a single outer iteration
is allowed per time step during the offline and online stages, the ROM is able to reproduce the fully-converged FOM solution with
sufficiently high accuracy. The ROM was also demonstrated to reproduce essential physical characteristics of the radiation wavefront in the F-C test, including (i) the amount of radiation present on the right-hand side of the domain throughout the time domain of the problem, and (ii) the spectral makeup of the radiation.

The ROM has been shown capable of producing accurate solutions to the F-C test with low-rank expansions, and future development is promising.
The ROM is a good candidate for continued development into the use of parameterization and hyper-reduction.
A potential parameterization for this ROM would be to interpolate between basis functions for different realizations of the F-C test using interpolants developed with a dimensionless study of the problem \cite{jc-dya-jqsrt-2023}.

\section{Acknowledgements}
Los Alamos Report LA-UR-23-29764.
The project or effort depicted is sponsored  by the Department of Defense, Defense Threat Reduction Agency, grant number HDTRA1-18-1-0042.
This work was supported by the U.S. Department of Energy through the Los Alamos National Laboratory. Los Alamos National Laboratory is operated by Triad National Security, LLC, for the National Nuclear Security Administration of U.S. Department of Energy (Contract No. 89233218CNA000001).
The content of the information does not necessarily reflect the position or the policy of the federal government, and no official endorsement should be inferred.

%
%
\appendix
\section{Derivation of Test Basis Functions} \label{apdx:test_basis}

In this appendix, a proof is provided via Theorem \ref{thm:ch3:pg_opt} to show the solution to Eq. \eqref{eq:nbte_R_proj} satisfies the condition \eqref{eq:pg_min} when test basis functions are defined by Eq. \eqref{eq:pg_basis}.
We use a weight matrix $\mathbf{W}$ (Eq. \eqref{eq:Wmat}) which is by definition symmetric positive definite (SPD).
This allows for use of Lemma \ref{lem:ch3:w_conv}.

\begin{lemma} \label{lem:ch3:w_conv}
	The norm $\|\cdot\|_W$ is strictly convex if $\mat{W}$ is symmetric positive definite
\end{lemma}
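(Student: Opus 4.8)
The plan is to reduce the statement to the classical fact that every norm induced by an inner product is strictly convex, and then to verify that $\|\cdot\|_W$ is of this form whenever $\mathbf{W}$ is symmetric positive definite (SPD). Recall that a norm $\|\cdot\|$ on $\real^d$ is \emph{strictly convex} if, for every pair of distinct vectors $\boldsymbol{u},\boldsymbol{v}$ with $\|\boldsymbol{u}\|=\|\boldsymbol{v}\|=1$ and every $\theta\in(0,1)$, one has $\|\theta\boldsymbol{u}+(1-\theta)\boldsymbol{v}\|<1$; equivalently, the triangle inequality $\|\boldsymbol{u}+\boldsymbol{v}\|\le\|\boldsymbol{u}\|+\|\boldsymbol{v}\|$ is strict unless $\boldsymbol{u}$ and $\boldsymbol{v}$ are nonnegative multiples of one another.

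First I would observe that, since $\mathbf{W}=\mathbf{W}^\top$ and $\mathbf{W}$ is positive definite, the bilinear form $\ipp{\boldsymbol{u}}{\boldsymbol{v}}_W=\boldsymbol{u}^\top\mathbf{W}\boldsymbol{v}$ of Eq.~\eqref{w-inner-0} is a genuine inner product on $\real^d$: symmetry follows from $\mathbf{W}=\mathbf{W}^\top$, bilinearity is immediate, and $\ipp{\boldsymbol{u}}{\boldsymbol{u}}_W>0$ for $\boldsymbol{u}\neq\boldsymbol{0}$ by positive definiteness. Hence $\|\boldsymbol{u}\|_W=\sqrt{\ipp{\boldsymbol{u}}{\boldsymbol{u}}_W}$ is an inner-product norm, and in particular satisfies the parallelogram identity
\begin{equation*}
\|\boldsymbol{u}+\boldsymbol{v}\|_W^2+\|\boldsymbol{u}-\boldsymbol{v}\|_W^2=2\|\boldsymbol{u}\|_W^2+2\|\boldsymbol{v}\|_W^2 .
\end{equation*}
Next I would establish midpoint strict convexity: if $\boldsymbol{u}\neq\boldsymbol{v}$ with $\|\boldsymbol{u}\|_W=\|\boldsymbol{v}\|_W=1$, then $\boldsymbol{u}-\boldsymbol{v}\neq\boldsymbol{0}$, so $\|\boldsymbol{u}-\boldsymbol{v}\|_W^2>0$ by definiteness, and the identity above gives $\|\boldsymbol{u}+\boldsymbol{v}\|_W^2=4-\|\boldsymbol{u}-\boldsymbol{v}\|_W^2<4$, i.e. $\|\tfrac12(\boldsymbol{u}+\boldsymbol{v})\|_W<1$. (One may equivalently invoke the Cauchy--Schwarz inequality for $\ipp{\cdot}{\cdot}_W$, whose equality case forces $\boldsymbol{u}$ and $\boldsymbol{v}$ to be linearly dependent.) To pass from the midpoint to an arbitrary $\theta\in(0,1)$ I would use the standard argument: express $\theta\boldsymbol{u}+(1-\theta)\boldsymbol{v}$ as a convex combination involving the midpoint $\boldsymbol{w}=\tfrac12(\boldsymbol{u}+\boldsymbol{v})$, apply the triangle inequality together with $\|\boldsymbol{w}\|_W<1$, and conclude $\|\theta\boldsymbol{u}+(1-\theta)\boldsymbol{v}\|_W<1$; the conclusion then transfers to distinct vectors of equal but arbitrary positive $W$-norm by homogeneity.

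I do not expect a genuine obstacle: the result is elementary once the reduction to an inner-product norm is made. The only points needing a little care are (i) that \emph{positive definiteness} of $\mathbf{W}$, not merely semidefiniteness, is what makes $\|\boldsymbol{u}-\boldsymbol{v}\|_W$ vanish only when $\boldsymbol{u}=\boldsymbol{v}$, and (ii) the routine bookkeeping upgrading midpoint strict convexity to strict convexity for all $\theta\in(0,1)$. Alternatively, the whole lemma follows in one line by noting that $\|\boldsymbol{u}\|_W=\|\mathbf{W}^{1/2}\boldsymbol{u}\|_2$, where $\mathbf{W}^{1/2}$ is the invertible SPD square root of $\mathbf{W}$, so that strict convexity of $\|\cdot\|_W$ is inherited from that of the Euclidean norm under an invertible linear change of variables.
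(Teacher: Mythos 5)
Your proof is correct, but it takes a genuinely different route from the paper, and in fact proves a (slightly different) reading of the statement. The paper's own proof works with the \emph{squared} norm $f(\boldsymbol{x})=\|\boldsymbol{x}\|_W^2=\boldsymbol{x}^\top\mat{W}\boldsymbol{x}$: it computes the Hessian $\frac{d^2f}{d\boldsymbol{x}^2}=2\mat{W}$, notes it is positive definite because $\mat{W}$ is SPD, and concludes that $f$ is a strictly convex \emph{function} — which is exactly the property invoked later in Theorem \ref{thm:ch3:pg_opt} to get a unique minimizer of $\|\mathcal{R}^n(\mat{U}\boldsymbol{\zeta})\|_W^2$. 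You instead read ``strictly convex norm'' in the standard normed-space (rotundity) sense and prove it via the parallelogram identity for the inner product $\ipp{\cdot}{\cdot}_W$, with the one-line alternative $\|\boldsymbol{u}\|_W=\|\mat{W}^{1/2}\boldsymbol{u}\|_2$. Your reading is arguably the more faithful one to the lemma as literally worded (a norm is never a strictly convex function on all of $\real^d$, being linear along rays from the origin, so the paper's Hessian computation only makes sense for the squared norm), and your reduction to the Euclidean norm under the invertible change of variables $\mat{W}^{1/2}$ is the cleanest argument. What the paper's version buys is directness for the downstream application: Theorem \ref{thm:ch3:pg_opt} needs strict convexity of $\boldsymbol{x}\mapsto\|\boldsymbol{x}\|_W^2$ composed with an affine map, and the Hessian computation delivers that immediately, whereas from your rotundity statement one would still have to pass to the squared norm (a trivial but nonzero extra step). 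Either version suffices; there is no gap in your argument.
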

\begin{proof}
	By the definition of the $W$ norm,
	\begin{align}
		\| \boldsymbol{x} \|_W^2 &= \boldsymbol{x}^\top\mat{W}\boldsymbol{x}.
	\end{align}
	
	\noindent Let $f(\boldsymbol{x}) = \| \boldsymbol{x} \|_W^2$, then
	\begin{equation}
		\frac{d f(\boldsymbol{x})}{d \boldsymbol{x}} = 2\boldsymbol{x}^\top\mat{W},\quad \frac{d^2 f(\boldsymbol{x})}{d \boldsymbol{x}^2} = 2\mat{W},
	\end{equation}
	
	\noindent since $\mat{W}$ is symmetric. The Hessian of $f(\boldsymbol{x})$ is therefore positive definite, since $\mat{W}$ is positive definite. This implies $f(\boldsymbol{x})$ strictly convex.
\end{proof}

\begin{theorem} \label{thm:ch3:pg_opt}
	The solution to Eq. \eqref{eq:nbte_R_proj} with test basis functions $\boldsymbol{\psi}_\ell = \frac{d \mathcal{R}^n(\lambda_\ell\boldsymbol{u}_\ell)}{d \lambda_\ell}$ satisfies the following minimization problem
	\begin{equation} \label{eq:ch3:wmin}
		\boldsymbol{\Lambda} = \arg\min_{\boldsymbol{\zeta}\in\real^k}\| \mathcal{R}^n(\mat{U}\boldsymbol{\zeta}) \|_W^2,
	\end{equation}
	
	\noindent where $\boldsymbol{\Lambda} = (\lambda_1 \dots \lambda_k)^\top$.
\end{theorem}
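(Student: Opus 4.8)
The plan is to exploit the fact that, once $T^n$ and $\boldsymbol{\phi}_h^n$ are held fixed (as they are during the solve for $\boldsymbol{\Lambda}^n$), the discretized NBTE residual $\mathcal{R}_h^n$ is an \emph{affine} function of its vector argument, so that the objective $f(\boldsymbol{\zeta}) \stackrel{\Delta}{=} \|\mathcal{R}_h^n(\mathbf{U}_k\boldsymbol{\zeta})\|_W^2$ is a convex quadratic in $\boldsymbol{\zeta}$, and then to identify the stationarity condition of $f$ with the projected system \eqref{eq:nbte_R_proj}. First I would record that the operators $\mathcal{T}_h^n$, $\mathcal{\bar T}_h^n$, $\mathcal{\bar L}_h$ are linear and $\mathbf{\bar Q}_h^n$ is constant, so that
\begin{equation*}
\mathcal{R}_h^n(\mathbf{U}_k\boldsymbol{\zeta}) = \mathbf{G}\boldsymbol{\zeta} - \mathbf{c}, \qquad
\mathbf{G} \stackrel{\Delta}{=} \big(\mathcal{T}_h^n + \mathcal{\bar L}_h\big)\mathbf{U}_k, \qquad
\mathbf{c} \stackrel{\Delta}{=} \mathcal{\bar T}_h^n\mathbf{U}_k\boldsymbol{\Lambda}^{n-1} + \mathbf{\bar Q}_h^n .
\end{equation*}

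The key observation I would make explicit is that the $\ell$-th column of $\mathbf{G}$ equals $(\mathcal{T}_h^n + \mathcal{\bar L}_h)\boldsymbol{u}_\ell$, which coincides with $\frac{d}{d\lambda_\ell}\mathcal{R}_h^n(\lambda_\ell\boldsymbol{u}_\ell) = \boldsymbol{\psi}_\ell$ from Eq. \eqref{eq:pg_basis}: because $\mathcal{R}_h^n$ is affine, its directional derivative along $\boldsymbol{u}_\ell$ is independent of the base point, so the single-mode derivative in \eqref{eq:pg_basis} is exactly the partial derivative $\partial\mathcal{R}_h^n(\mathbf{U}_k\boldsymbol{\zeta})/\partial\zeta_\ell$. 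Hence $\mathbf{\Psi} = \mathbf{G}$. I would then compute $\nabla_{\boldsymbol{\zeta}} f$ by the chain rule, $\partial f/\partial\zeta_\ell = 2\,\boldsymbol{\psi}_\ell^\top\mathbf{W}\,\mathcal{R}_h^n(\mathbf{U}_k\boldsymbol{\zeta})$, so that $\nabla_{\boldsymbol{\zeta}} f = 2\,\mathbf{\Psi}^\top\mathbf{W}\mathcal{R}_h^n(\mathbf{U}_k\boldsymbol{\zeta})$, and note that $\nabla_{\boldsymbol{\zeta}} f(\boldsymbol{\Lambda}) = 0$ is precisely Eq. \eqref{eq:nbte_R_proj}.

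It then remains to argue that a stationary point of $f$ is a global minimizer. This follows from Lemma \ref{lem:ch3:w_conv}: $f$ is the composition of the affine map $\boldsymbol{\zeta}\mapsto\mathbf{G}\boldsymbol{\zeta}-\mathbf{c}$ with the strictly convex function $\|\cdot\|_W^2$, hence convex, so every critical point is a global minimum; if $\mathbf{G}$ has full column rank (equivalently, $\mathbf{U}_k$ has full rank and $\mathcal{T}_h^n+\mathcal{\bar L}_h$ is injective on its range) the composition is strictly convex and the minimizer is unique. Equivalently, one may simply remark that $\mathbf{\Psi}^\top\mathbf{W}\mathcal{R}_h^n(\mathbf{U}_k\boldsymbol{\zeta})=0$ \emph{is} the normal-equations system for $\min_{\boldsymbol{\zeta}}\|\mathbf{G}\boldsymbol{\zeta}-\mathbf{c}\|_W^2$ in the $\mathbf{W}$-inner product, which by the SPD property of $\mathbf{W}$ characterizes the unique $\mathbf{W}$-orthogonal projection.

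The main (and essentially only) obstacle is the identification $\boldsymbol{\psi}_\ell = \partial\mathcal{R}_h^n(\mathbf{U}_k\boldsymbol{\zeta})/\partial\zeta_\ell$ — i.e., justifying that the one-mode derivative of \eqref{eq:pg_basis} is the correct column of the Jacobian of the full multi-mode residual. This rests entirely on the affineness of the discretized NBTE operator in $\boldsymbol{\bar I}_h^n$ with frozen coefficients, so I would state that structural fact at the outset; the chain-rule computation and the convexity argument are then routine.
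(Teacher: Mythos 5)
Your proposal is correct and follows essentially the same route as the paper's proof: differentiate $f(\boldsymbol{\zeta})=\|\mathcal{R}_h^n(\mathbf{U}_k\boldsymbol{\zeta})\|_W^2$ by the chain rule, observe that the stationarity condition $\mathbf{\Psi}^\top\mathbf{W}\mathcal{R}_h^n(\mathbf{U}_k\boldsymbol{\Lambda})=0$ is exactly Eq.~\eqref{eq:nbte_R_proj}, and invoke Lemma~\ref{lem:ch3:w_conv} to conclude the stationary point is the minimizer. Your additional observations --- that the affineness of the frozen-coefficient residual is what justifies identifying the single-mode derivative $\boldsymbol{\psi}_\ell$ of Eq.~\eqref{eq:pg_basis} with the $\ell$-th column of the full Jacobian $d\mathcal{R}_h^n(\mathbf{U}_k\boldsymbol{\Lambda})/d\boldsymbol{\Lambda}$, and that uniqueness of the minimizer requires $(\mathcal{T}_h^n+\mathcal{\bar L}_h)\mathbf{U}_k$ to have full column rank --- are points the paper's proof passes over silently, and they tighten rather than change the argument.
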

\begin{proof}
	Let $f(\boldsymbol{x}(\boldsymbol{\Lambda})) = \|\boldsymbol{x}(\boldsymbol{\Lambda})\|_W^2$ with $\boldsymbol{x}(\boldsymbol{\Lambda}) = \mathcal{R}^n(\mat{U}\boldsymbol{\Lambda})$. Assuming $\mat{W}$ symmetric positive definite, $f$ must have a unique minimizer $\hat{\boldsymbol{x}}=\boldsymbol{x}(\hat{\boldsymbol{\Lambda}})$ by Lemma \ref{lem:ch3:w_conv}. This point occurs at $\frac{df}{d\boldsymbol{x}}=0$, or equivalently at $\frac{df}{d\boldsymbol{\Lambda}}=0$ since $\frac{df}{d\boldsymbol{\Lambda}}=\frac{df}{d\boldsymbol{x}}\frac{d\boldsymbol{x}}{d\boldsymbol{\Lambda}}$. These latter two derivatives are
	\begin{equation}
		\frac{df}{d\boldsymbol{x}} = 2\boldsymbol{x}^\top\mat{W},\quad \frac{d\boldsymbol{x}}{d\boldsymbol{\Lambda}} = \frac{d\mathcal{R}^n(\mat{U}\boldsymbol{\Lambda})}{d\boldsymbol{\Lambda}}.
	\end{equation}
	
	\noindent Setting $\frac{df}{d\boldsymbol{\Lambda}}=0$ therefore leads to the following
	\begin{equation}
		\frac{df}{d\boldsymbol{x}}\frac{d\boldsymbol{x}}{d\boldsymbol{\Lambda}} = 2\boldsymbol{x}^\top\mat{W}\frac{d\mathcal{R}^n(\mat{U}\boldsymbol{\Lambda})}{d\boldsymbol{\Lambda}} = \boldsymbol{0}^\top,
	\end{equation}
	
	\noindent or equivalently
	\begin{equation} \label{eq:ch3:drdl_eq}
		\bigg( \frac{d\mathcal{R}^n(\mat{U}\boldsymbol{\Lambda})}{d\boldsymbol{\Lambda}} \bigg)^\top\mat{W}\boldsymbol{x} = \boldsymbol{0},
	\end{equation}
	
	\noindent where $\boldsymbol{0}\in\real^k$ is the vector of all 0's. Substituting the definition for $\boldsymbol{x}$ and letting $\mat{\Psi}=\frac{d\mathcal{R}^n(\mat{U}\boldsymbol{\Lambda})}{d\boldsymbol{\Lambda}}$
	in Eq. \ref{eq:ch3:drdl_eq} gives us the relation $\mat{\Psi}^T\mat{W}\mathcal{R}^n(\mat{U}\boldsymbol{\Lambda}) = 0$. Therefore the solution $\boldsymbol{\Lambda}$ to Eq. \eqref{eq:nbte_R_proj} with $\mat{\Psi}=\frac{d\mathcal{R}^n(\mat{U}\boldsymbol{\Lambda})}{d\boldsymbol{\Lambda}}$ satisfies the minimization problem in Eq. \ref{eq:ch3:wmin}.
\end{proof}

\bibliographystyle{model1-num-names}
\bibliography{jmc-dya-nbte-podpg-Arxiv}

\end{document}